\theoremstyle{plain}
\newtheorem{theorem}{Theorem}
\newtheorem{lemma}[theorem]{Lemma}
\newtheorem{proposition}[theorem]{Proposition}
\newtheorem{corollary}[theorem]{Corollary}
\numberwithin{theorem}{section}
\numberwithin{equation}{theorem}
\theoremstyle{definition}
\newtheorem{definition}[theorem]{Definition}
\newtheorem{example}[theorem]{Example}
\newtheorem{remark}[theorem]{Remark}
\newtheorem*{question*}{Question}
\DeclareMathOperator{\Ext}{Ext}
\DeclareMathOperator{\Hom}{Hom}
\DeclareMathOperator{\id}{injdim}
\DeclareMathOperator{\gldim}{gldim}
\DeclareMathOperator{\depth}{depth}
\DeclareMathOperator{\gr}{gr}
\DeclareMathOperator{\mcm}{mcm}
\DeclareMathOperator{\uExt}{{\underline{Ext}}}
\DeclareMathOperator{\uHom}{{\underline{Hom}}}
\def\kk{\mathbbm{k}}
\def\mmod{\operatorname{mod}}
\def\gr{\operatorname{gr}}
\def\tor{\operatorname{tor}}
\def\Gr{\operatorname{Gr}}
\def\qgr{\operatorname{qgr}}
\begin{document}

\title[Clifford deformations]
{Clifford deformations of Koszul Frobenius algebras and noncommutative quadrics}

\author{Ji-Wei He and Yu Ye}

\address{He: Department of Mathematics,
Hangzhou Normal University,
Hangzhou Zhejiang 311121, China}
\email{jwhe@hznu.edu.cn}
\address{Ye: School of Mathematical Sciences, University of Science and Technology of China, Hefei Anhui 230026, China}
\email{yeyu@ustc.edu.cn}

\begin{abstract} Let $E$ be a Koszul Frobenius algebra. A Clifford deformation of $E$ is a finite dimensional $\mathbb Z_2$-graded algebra $E(\theta)$, which corresponds to a noncommutative quadric hypersurface $E^!/(z)$, for some central regular element $z\in E^!_2$. It turns out that the bounded derived category $D^b(\gr_{\mathbb Z_2}E(\theta))$ is equivalent to the stable category of the maximal Cohen-Macaulay modules over $E^!/(z)$ provided that $E^!$ is noetherian. As a consequence, $E^!/(z)$ is a noncommutative isolated singularity if and only if the corresponding Clifford deformation $E(\theta)$ is a semisimple $\mathbb Z_2$-graded algebra. The preceding equivalence of triangulated categories also indicates that Clifford deformations of trivial extensions of a Koszul Frobenius algebra are related to the Kn\"{o}rrer Periodicity Theorem for quadric hypersurfaces. As an application, we recover Kn\"{o}rrer Periodicity Theorem without using matrix factorizations.
\end{abstract}

\subjclass[2010]{16S37, 16E65, 16G50}


\keywords{Koszul Frobenius algebra, Clifford deformation, noncommutative quadric hypersurface, maximal Cohen-Macaulay module}


\maketitle


\setcounter{section}{-1}
\section{Introduction}

Let $S$ be a noetherian Koszul Artin-Schelter regular algebra, and let $z\in S_2$ be a central regular element of $S$. The quotient algebra $A=S/(z)$ is a Koszul Artin-Schelter Gorenstein algebra, which is usually called a quadric hypersurface algebra. Smith and Van den Bergh introduced in \cite{SvdB} a finite dimensional algebra $C(A)$ associated to the quadric hypersurface algebra $A$, which determines the representations of the singularities of $A$. In particular, the simplicity of $C(A)$ implies the smoothness of $\text{Proj}A$ (cf. \cite[Proposition 5.2]{SvdB}), where $\text{Proj} A$ is the noncommutative projective scheme (cf. \cite{AZ}).

Let $E=S^!$ be the quadratic dual algebra of $S$. Then $E$ is a Koszul Frobenius algebra. The dimension of $C(A)$ is equal to the one of the even degree part of $E$ (cf. \cite[Lemma 5.1]{SvdB}). As the key observation of this paper, we notice that $C(A)$ may be obtained from some Poincar\'{e}-Birkhoff-Witt (PBW) deformation of $E$.

Write the above Koszul Frobenius algebra $E$ as $E=T(V)/(R)$ for some finite dimensional vector space $V$ over a field $\kk$ and $R\subseteq V\otimes V$. Let $\theta\colon R\to \kk$ be a linear map. If $(\theta\otimes 1-1\otimes \theta)(V\otimes R\cap R\otimes V)=0$, then $\theta$ defines a PBW deformation $E(\theta)$ of $E$ (cf. \cite[Proposition 1.1, Chapter 5]{PP}); that is, $E(\theta)$ is a filtered algebra whose associated graded algebra is isomorphic to $E$. We call $\theta$ a {\it Clifford map} and $E(\theta)$ a {\it Clifford deformation} of $E$ (more precisely, see Definition \ref{def1}). First examples are classical Clifford algebras, which motivate the name of Clifford deformation. In fact, if we take $E$ to be the exterior algebra generated by $V$, then any Clifford map corresponds to a symmetric bilinear form on $V$, and the associated Clifford deformation can realize the classical Clifford algebras (cf. Example \ref{ex1}).

For every Clifford map $\theta$, the algebra $E(\theta)$ has a natural $\mathbb Z_2$-graded structure. Note that every Clifford map $\theta$ corresponds to a central element $z\in S_2$ (cf. Remark \ref{rem2}).
It turns out that the degree 0 part $E(\theta)_0$ coincides with the finite dimensional algebra $C(A)$ if $z$ is regular (cf. Proposition \ref{prop2}). Hence the structure of $E(\theta)$ will determine the representations of singularities of $A$.

Let $\gr_{\mathbb Z_2}E(\theta)$ be the category of finite dimensional right $\mathbb Z_2$-graded $E(\theta)$-modules. Let $\underline{\mcm}A$ be the stable category of graded maximal Cohen-Macaulay modules over $A$. The main observation of the paper is the following result (cf. Theorem \ref{thm1}(iii)).

\begin{theorem}\label{thm0.1} Let $S$ be a noetherian Koszul Artin-Schelter regular algebra and let $z\in S_2$ be a central regular element. Set $A=S/(z)$ and let $E=S^!$ be the quadratic dual algebra of $S$. Let $\theta_z$ be the Clifford map corresponding to $z$. Then there is an equivalence of triangulated categories $$D^b(\gr_{\mathbb Z_2}E(\theta_z))\cong \underline{\mcm} A.$$
\end{theorem}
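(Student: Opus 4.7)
The plan is to pass between the two triangulated categories through Koszul duality, matching the $\mathbb Z_2$-grading on $E(\theta_z)$ with the natural $\mathbb Z_2$-periodicity on $\underline{\mcm} A$ coming from the degree-two central element $z$.

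First I would recall that Koszul duality for AS-regular algebras supplies a triangle equivalence between appropriate subcategories of $D^b(\gr S)$ and $D^b(\gr E)$. Under this duality, the central element $z\in S_2$ corresponds to the Clifford map $\theta_z\colon R\to \kk$, so that passing from $S$ to $A=S/(z)$ is mirrored on the Koszul-dual side by passing from $E$ to the PBW deformation $E(\theta_z)$. By Proposition~\ref{prop2}, the degree-zero part $E(\theta_z)_0$ already coincides with the Smith--Van den Bergh algebra $C(A)$ that controls the singularities of $A$, so the theorem can be read as promoting this classical control to a derived equivalence, with the $\mathbb Z_2$-grading encoding precisely the extra data needed beyond the degree-zero piece.

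Next I would invoke Buchweitz's theorem to identify $\underline{\mcm} A$ with the singularity category $D_{\mathrm{sg}}(\gr A)=D^b(\gr A)/\mathrm{perf}(\gr A)$. Because $z$ is central of degree $2$, multiplication by $z$ makes the internal shift $(2)$ invertible on $\underline{\mcm} A$ and in fact isomorphic to the suspension squared, so $\underline{\mcm} A$ is naturally $\mathbb Z_2$-periodic; this is the origin of the $\mathbb Z_2$-grading appearing on the other side. I would then construct a comparison functor
\[
\Phi\colon D^b(\gr_{\mathbb Z_2}E(\theta_z))\longrightarrow \underline{\mcm} A
\]
by a $\mathbb Z_2$-graded Koszul--BGG recipe: to $M=M_{\overline 0}\oplus M_{\overline 1}$ one associates the two-periodic $A$-linear complex built on $A\otimes M$ whose differential is the Koszul differential corrected by $\theta_z$. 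The Clifford identity on $\theta_z$ is precisely the condition forcing $d^2$ to equal multiplication by $z$, hence to vanish in $A$, so the output is an honest $\mathbb Z_2$-periodic noncommutative matrix factorization delivering an object of $\underline{\mcm} A$.

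To verify that $\Phi$ is a triangle equivalence I would check it on a generator (the trivial module $\kk$), computing $\Hom^\bullet$ on both sides via Frobenius duality for $E(\theta_z)$ on the source and the Koszul resolution of $\kk_A$ on the target, and then invoke standard generation arguments (where the noetherian hypothesis on $E^!=S$ is essential to produce enough compact generators in $\underline{\mcm} A$). The hard part will be the fully-faithfulness step in this noncommutative setting: the classical matrix-factorization calculus is not directly available, so the matching of $\Hom$-spaces will require a careful spectral-sequence or convergence argument reconciling the $\mathbb Z$-grading on $A$ with the $\mathbb Z_2$-grading on $E(\theta_z)$, and showing that the Clifford relations encode exactly the syzygies in $\underline{\mcm} A$ modulo the hypersurface $z$.
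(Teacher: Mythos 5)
Your plan takes a genuinely different and considerably harder route than the paper's. You propose to build the equivalence by hand via a BGG-style/matrix-factorization construction and then check full faithfulness by spectral-sequence arguments — exactly the machinery the paper is designed to avoid (the abstract emphasizes recovering Kn\"orrer periodicity \emph{without} matrix factorizations). The paper's proof is short and leans on three ingredients you partly mention but do not chain together: (1) Smith--Van den Bergh's Proposition 5.2 already gives the triangle equivalence $D^b(\mmod A^![w^{-1}]_0)\cong\underline{\mcm}A$, where $w\in A^!_2$ is the image of the dual quadric $r_0^*$; (2) Proposition~\ref{prop2} identifies $A^![w^{-1}]_0$ with the degree-zero piece $E(\theta_z)_0$ as an (ungraded) algebra; (3) Proposition~\ref{prop3} shows $E(\theta_z)$ is \emph{strongly} $\mathbb Z_2$-graded, so by \cite[Theorem 3.1.1]{NvO} one has an equivalence of abelian categories $\gr_{\mathbb Z_2}E(\theta_z)\cong\mmod E(\theta_z)_0$, which passes to bounded derived categories. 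Composing gives the theorem with no new functor to construct and no Hom-space computations to match. You even quote (2) in your second paragraph but then set it aside and instead reach for the two-periodic Koszul complex, so you end up re-deriving Smith--Van den Bergh's equivalence from scratch.

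Beyond the efficiency question, your sketch has a real gap exactly where you flag one: full faithfulness of your comparison functor is asserted, not proved, and the ``careful spectral-sequence or convergence argument'' is the entire content of the hard direction. In the noncommutative setting there is no off-the-shelf matrix-factorization calculus to fall back on (cf.\ \cite{MU} for how nontrivial this is), so your proof as written is incomplete at precisely the step that matters. There is also a minor misstatement: you say ``multiplication by $z$ makes the internal shift $(2)$ invertible on $\underline{\mcm}A$,'' but $z=0$ in $A$; the 2-periodicity you want comes from the syzygy-periodicity of MCM modules over a quadric hypersurface, not from multiplying by $z$. None of this is fatal to the \emph{strategy}, but as a proof it needs the full-faithfulness computation carried out, whereas the paper sidesteps it entirely by reducing to the strongly-graded equivalence of abelian categories.
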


Let $\gr A$ be the category of finitely generated right graded $A$-modules , and $\tor A$ be the full subcategory consisting of finite dimensional ones. Set $\qgr A=\gr A/\tor A$ to be the quotient category, which is again an abelian category. If $\qgr A$ has finite global dimension, then $A$ is called {\it a noncommutative isolated singularity} (cf. \cite{Ue}),  or equivalently, Proj$A$ is {\it smooth} (cf. \cite{SvdB}).

Let $S, z, A$ be as above. Assume $\gldim(S)\ge2$. In \cite[Proposition 5.2(2)]{SvdB}, Smith and Van den Bergh showed that if $C(A)$ is semisimple, then Proj$A$ is smooth. Moreover, they also showed that the converse is also true in some special case (cf. \cite[Theorem 5.6]{SvdB}). It is natural to ask whether the converse statement holds true in general. By applying Theorem \ref{thm0.1}, we have the following result, which essentially gives an affirmative answer to this question (cf. Theorem \ref{thm2}).

\begin{theorem}\label{thm0.2} Retain the notation as Theorem \ref{thm0.1}, and assume $\gldim(S)\ge2$.  Then $A$ is a noncommutative isolated singularity if and only if $E(\theta_z)$ is a semisimple $\mathbb Z_2$-graded algebra.
\end{theorem}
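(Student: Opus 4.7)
The plan is to apply Theorem \ref{thm0.1} to transport both sides of the biconditional into the common triangulated category $\mathcal{T}:=D^b(\gr_{\mathbb{Z}_2}E(\theta_z))\cong \underline{\mcm} A$ and match the resulting conditions. The crucial auxiliary fact I would isolate first is that $E(\theta_z)$, being a Clifford/PBW deformation of the Koszul Frobenius algebra $E$, is itself a finite-dimensional $\mathbb{Z}_2$-graded Frobenius algebra (the Frobenius form of $E$ lifts through the filtered deformation). For a finite-dimensional Frobenius algebra, semisimplicity is equivalent to finite global dimension, which reduces the right-hand side of the claim to the condition $\gldim E(\theta_z)<\infty$.

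For the direction ``$E(\theta_z)$ semisimple $\Rightarrow A$ isolated singularity'': semisimplicity makes the heart of the standard $t$-structure on $\mathcal{T}$ a semisimple abelian category, so $\Hom_{\mathcal T}(X,Y[n])=0$ for all $n\ne 0$ and all $X,Y$ in the heart. Transporting through Theorem \ref{thm0.1} yields $\uExt^{\ge 1}_A(M,N)=0$ for all MCM modules $M,N$. Since $A$ is AS-Gorenstein with $\gldim S\ge 2$, high syzygies of any finitely generated graded $A$-module are MCM, so $\Ext^{i}_A(M,N)=0$ for $i\gg 0$ uniformly in finitely generated $M,N$; the standard comparison between $\gr A$- and $\qgr A$-Ext (via the torsion/quotient localization) then gives $\gldim\qgr A<\infty$. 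For the converse, if $\qgr A$ has finite global dimension, then an Orlov-style comparison between $D^b(\qgr A)$ and $D_{sg}(A)\cong\underline{\mcm} A$ (involving a finite exceptional collection of twists of the trivial module, whose length is controlled by the Gorenstein parameter of $A$) forces $\underline{\mcm} A$ to have finite triangulated (Rouquier) dimension. Transporting back through Theorem \ref{thm0.1}, this gives $\gldim E(\theta_z)<\infty$, which by the Frobenius property means $E(\theta_z)$ is semisimple.

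The main obstacle is the converse direction, in which one must extract finiteness of $\gldim E(\theta_z)$ from finiteness of $\gldim\qgr A$; the bridge is the Orlov-style semiorthogonal decomposition for the Koszul AS-Gorenstein algebra $A$ combined with Theorem \ref{thm0.1}, and the Frobenius property of $E(\theta_z)$ is what ultimately upgrades ``finite global dimension'' into ``zero global dimension.'' The hypothesis $\gldim S\ge 2$ is used to ensure that $A$ is genuinely singular, so the exceptional collection appearing in the Orlov comparison is nontrivial but finite, and the matching between $\mathcal T$ and $\underline{\mcm} A$ is nondegenerate.
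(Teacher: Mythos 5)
Your overall architecture is close to the paper's, and you correctly identify the two load-bearing facts: $E(\theta_z)$ is a $\mathbb Z_2$-graded Frobenius algebra (Proposition~\ref{prop4}), so for it ``finite global dimension'' is the same as ``semisimple''; and Theorem~\ref{thm0.1} together with an Orlov-type embedding $\Phi:\underline{\mcm}\,A\hookrightarrow D^b(\qgr A)$ is what lets one pass between the two sides of the biconditional. However, the mechanism you use in the converse direction does not work as stated. You infer that $\underline{\mcm}\,A$ (equivalently $D^b(\gr_{\mathbb Z_2}E(\theta_z))$) has finite \emph{Rouquier} dimension, and then conclude $\gldim_{\mathbb Z_2}E(\theta_z)<\infty$. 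That implication is false: $D^b(\gr_{\mathbb Z_2}\Lambda)$ can have finite Rouquier dimension for a self-injective $\Lambda$ of infinite global dimension (e.g.\ $\Lambda=\kk[x]/(x^2)$ with $x$ odd; every module is a sum of simples and projectives, so the Rouquier dimension is $\le 1$, yet the simple has infinite projective dimension). Finite Rouquier dimension gives generation in finitely many cones, not vanishing of high $\Ext$.

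The correct bridge, and the one the paper uses, avoids Rouquier dimension entirely: since $\qgr A$ has finite global dimension, for each \emph{fixed} pair of objects $X,Y\in D^b(\qgr A)$ one has $\Hom(X,Y[i])=0$ for $i\gg0$; pushing the fixed object $T=E(\theta_z)/J$ (the semisimple top) through the composite fully faithful functor $\Phi\circ\Psi$ (with $\Psi$ the equivalence of Theorem~\ref{thm0.1} and $\Phi$ from Lemma~\ref{lem10}) yields $\Ext^i_{\gr_{\mathbb Z_2}E(\theta_z)}(T,T)=0$ for $i\gg0$. Since every graded simple is a summand of $T$, this forces $\pd T<\infty$, hence $\gldim_{\mathbb Z_2}E(\theta_z)<\infty$, and the Frobenius property then gives semisimplicity. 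Replace your Rouquier-dimension step with this pointwise $\Ext$-vanishing argument applied to $T$, and the converse direction goes through. For the forward direction, the paper does not re-derive smoothness of $\qgr A$ from $\Ext$-vanishing between MCM modules as you sketch; it observes via Proposition~\ref{prop3} that $E(\theta_z)$ is strongly $\mathbb Z_2$-graded, so $\gr_{\mathbb Z_2}E(\theta_z)\cong\mmod E(\theta_z)_0$, identifies $E(\theta_z)_0$ with $A^![w^{-1}]_0$ via the isomorphism~(\ref{eq6}), and then invokes Smith--Van den Bergh's Proposition~5.2(2) directly. Your route through the heart of the $t$-structure and a $\gr A$/$\qgr A$ Ext comparison could in principle be made to work, but as written it skips the substantive step (the local-cohomology comparison that relates vanishing of stable Ext between MCM modules to finiteness of $\gldim\qgr A$), whereas the citation makes this cheap.
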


Note that the sufficiency part of the above theorem is essentially a consequence of \cite[Proposition 5.2(2)]{SvdB}. For the necessity part, Smith and Van den Bergh have also shown a special case, say if $S$ has Hilbert series $(1-t)^{-4}$ (cf. \cite[Theorem 5.6]{SvdB}). In this case, the corresponding algebra $C(A)$ is of dimension $8$, and their proof depends on a
detailed analysis of the representations of $C(A)$. We mention that the method we use to prove the necessity part of Theorem 0.2 is totally different from that of  \cite[Theorem 5.6]{SvdB}, and our method works for general Koszul Artin-Schelter regular algebras.

Compared to the algebra $C(A)$ constructed in \cite{SvdB}, the Clifford deformations of a Koszul Frobenius algebra are relatively easy to determine. Especially, when $S$ has lower global dimensions, it is possible to find all the quadric hypersurfaces obtained from $S$ and determine whether they are isolated singularities or not, by a detailed analysis on the possible structures of Clifford deformations $E(\theta)$ (cf. Section \ref{sec-exam}).

Another advantage of Clifford deformations is that they could give a new explanation of Kn\"{o}rrer's Periodicity Theorem (cf. \cite[Theorem 3.1]{K}, and noncommutative case \cite[Theorem 1.7]{CKMW}), at least for quadric hypersurfaces. The method used in \cite{K} or in \cite{CKMW} to prove Kn\"{o}rrer's Periodicity Theorem are matrix factorizations (more generally, see \cite{MU}).
In our observation, Kn\"{o}rrer's Periodicity Theorem for quadric hypersurfaces may be explained by Clifford deformations of trivial extensions of Koszul Frobenius algebras (cf. Section \ref{sectr}).

Let $\widetilde{E}:=E\oplus {}_\epsilon E(-1)$ be the trivial extension of $E$ (precisely, see Section \ref{sectr}). Then $\widetilde{E}$ is also a Koszul Frobenius algebra. A Clifford map $\theta$ of $E$ induces a Clifford map $\widetilde{\theta}$ of $\widetilde{E}$. Iterate the trivial extensions, we obtain a Koszul algebra $\widetilde{\widetilde{E}}$ and a Clifford map $\widetilde{\widetilde{\theta}}$ of $\widetilde{\widetilde{E}}$. Assume that the base field $\kk=\mathbb C$. Then we have the following periodicity property (cf. Proposition \ref{propt2}).

\begin{proposition} There is an equivalence of abelian categories $\gr_{\mathbb Z_2} \widetilde{\widetilde{E}}(\,\widetilde{\widetilde{\theta}}\,)\cong \gr_{\mathbb Z_2}E(\theta)$.
\end{proposition}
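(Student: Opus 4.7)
The plan is to show that $\widetilde{\widetilde{E}}(\widetilde{\widetilde{\theta}})$ is $\mathbb{Z}_{2}$-graded Morita equivalent to $E(\theta)$, by identifying the former with a graded tensor product of the latter with a rank-two Clifford algebra over $\mathbb{C}$, the latter being a matrix algebra and hence Morita trivial.

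First, I would unpack the iterated trivial extension. The algebra $\widetilde{E}$ adjoins a new degree-one generator $\epsilon$ with $\epsilon^{2}=0$ and super-commuting with $V$ as dictated by the bimodule structure ${}_{\epsilon}E(-1)$. Iterating yields $\widetilde{\widetilde{E}}$ generated by $V$ together with two new odd generators $\epsilon_{1},\epsilon_{2}$, each square-zero and super-anticommuting with $V$ and with each other. The induced Clifford map $\widetilde{\widetilde{\theta}}$ restricts to $\theta$ on $R$, sends each $\epsilon_{i}\otimes\epsilon_{i}$ to a chosen nonzero scalar (which we may normalize to $1$ since $\kk=\mathbb{C}$), and vanishes on the mixed relations. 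I would verify that this assignment satisfies the Clifford compatibility condition, which reduces to the known compatibility for $\theta$ once one uses that the $\epsilon_{i}$ are super-central in the trivial extension.

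Second, I would establish a $\mathbb{Z}_{2}$-graded algebra isomorphism
\[
\widetilde{\widetilde{E}}(\widetilde{\widetilde{\theta}})\;\cong\; E(\theta)\,\hat{\otimes}_{\kk}\, C\ell_{2},
\]
where $\hat{\otimes}$ is the sign-twisted super tensor product of $\mathbb{Z}_{2}$-graded algebras and $C\ell_{2}=\kk\langle\epsilon_{1},\epsilon_{2}\rangle/(\epsilon_{i}^{2}-1,\,\epsilon_{1}\epsilon_{2}+\epsilon_{2}\epsilon_{1})$. The map sends generators to generators; surjectivity is clear, and a PBW-type dimension count (using that Clifford deformation preserves total dimension and that the iterated trivial extension has a PBW basis in the new odd generators over $E$) promotes it to an isomorphism. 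The super sign rule on $\hat{\otimes}$ matches precisely the supercommutation between $V$ and $\epsilon_{i}$ arising in the iterated trivial extension.

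Third, over $\kk=\mathbb{C}$ the Clifford algebra $C\ell_{2}$ is $\mathbb{Z}_{2}$-graded isomorphic to $M_{2}(\mathbb{C})$ with its chess-board grading (diagonal in degree $0$, off-diagonal in degree $1$). A $\mathbb{Z}_{2}$-graded Morita equivalence $M_{2}(\mathbb{C})\sim\mathbb{C}$ via the graded progenerator $\mathbb{C}^{1|1}$ then yields a graded Morita equivalence $E(\theta)\,\hat{\otimes}\,C\ell_{2}\sim E(\theta)$, inducing the desired equivalence of abelian categories $\gr_{\mathbb{Z}_{2}}\widetilde{\widetilde{E}}(\widetilde{\widetilde{\theta}})\cong\gr_{\mathbb{Z}_{2}}E(\theta)$. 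The main obstacle is Step 2: ensuring the iterated Clifford deformation genuinely splits as a super tensor product. One must check both that the new generators really super-commute with $V$ \emph{in the deformation} (not merely in the undeformed $\widetilde{\widetilde{E}}$), and that restriction to the $\epsilon_{i}$-subalgebra gives the full Clifford algebra rather than a degenerate quotient. This hinges on tracking the degree shift $(-1)$ and the twist $\epsilon$ in the trivial-extension bimodule through two iterations and through the Clifford compatibility condition; the assumption $\kk=\mathbb{C}$ enters only to diagonalize $\widetilde{\widetilde{\theta}}$ on the new generators and to split $C\ell_{2}$ as a matrix algebra.
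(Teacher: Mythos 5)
Your proposal is correct and runs on exactly the same track as the paper's own argument: identify $\widetilde{\widetilde{E}}(\widetilde{\widetilde{\theta}})$ with a graded tensor product of $E(\theta)$ and a rank-two complex Clifford algebra (the paper writes this Clifford factor as $\mathbb{C}\mathbb{G}\,\hat\otimes\,\mathbb{C}\mathbb{G}$ and reaches it by applying Proposition \ref{propt1} once to each iteration of the trivial extension), then recognize that factor as $M_2(\mathbb{C})$ with the chessboard $\mathbb{Z}_2$-grading, and conclude by $\mathbb{Z}_2$-graded Morita equivalence as in Lemma \ref{lemTE1}. The verification you flag as the main obstacle --- that the odd generators genuinely supercommute with $V$ in the deformation and that the deformed subalgebra is the full $C\ell_2$, not a degenerate quotient --- is handled in the paper by the explicit surjection plus a PBW dimension count in the proof of Proposition \ref{propt1}, which is precisely the device you invoke.
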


Retain the notation of Theorem \ref{thm0.1}, and assume $\kk$ is the field of complex numbers. A {\it double branched cover} of $A$ is defined to be the Artin-Schelter Gorenstein algebra $A^\#=S[v]/(z+v^2)$ and the {\it second double branched cover} of $A$ is defined to be the Artin-Schelter Gorenstein algebra $A^{\#\#}=S[v_1,v_2]/(z+v_1^2+v_2^2)$ (cf. \cite[Chapter 8]{LW}, and \cite{CKMW}). The above periodicity property of Clifford deformations of iterate trivial extensions of Koszul Frobenius algebras implies the following Kn\"{o}rrer Periodicity Theorem for noncommutative quadric hypersurfaces (cf. Theorems \ref{thmk} and \ref{thmkp}). We remark that similar results were also obtained in \cite{MU2} recently by using noncommutative matrix factorizations.

\begin{theorem} Retain the notation as above. Assume that $\gldim S\ge2$. Then

{\rm(i)} $A$ is a noncommutative isolated singularity if and only if so is $A^\#$.

{\rm(ii)} there is an equivalence of triangulated categories $\underline{\mcm} A\cong \underline{\mcm} A^{\#\#}$.
\end{theorem}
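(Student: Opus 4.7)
The plan is to deduce (i) and (ii) from Theorems \ref{thm0.1} and \ref{thm0.2} together with Proposition \ref{propt2}, once the Koszul-dual dictionary for the (second) double branched cover has been installed. First I would verify the following dictionary: $S[v]$ is a Koszul Artin--Schelter regular algebra whose quadratic dual is naturally isomorphic to the trivial extension $\widetilde{E}$ of Section \ref{sectr}, and under this identification the central regular element $z+v^2\in S[v]_2$ corresponds via Remark \ref{rem2} to the induced Clifford map $\widetilde{\theta_z}$. Iterating yields $(S[v_1,v_2])^!\cong\widetilde{\widetilde{E}}$ with the analogous matching of $z+v_1^2+v_2^2$ and $\widetilde{\widetilde{\theta_z}}$. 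I expect the main obstacle to lie exactly here: one must keep the various $\mathbb Z$- and $\mathbb Z_2$-gradings straight and check that the Clifford map produced by the trivial-extension procedure is precisely the one attached to the added central square, which amounts to an explicit comparison of defining tensors.

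Given this dictionary, part (ii) is essentially formal. Applying Theorem \ref{thm0.1} to $A^{\#\#}=S[v_1,v_2]/(z+v_1^2+v_2^2)$ produces
\[
\underline{\mcm}\,A^{\#\#}\cong D^b\bigl(\gr_{\mathbb Z_2}\widetilde{\widetilde{E}}(\,\widetilde{\widetilde{\theta_z}}\,)\bigr).
\]
Proposition \ref{propt2} then replaces the right-hand side by $D^b(\gr_{\mathbb Z_2}E(\theta_z))$, since an equivalence of abelian categories induces an equivalence of bounded derived categories. A final application of Theorem \ref{thm0.1}, this time to $A$, identifies the result with $\underline{\mcm}\,A$.

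For (i), apply Theorem \ref{thm0.2} to $A$ and to $A^\#=S[v]/(z+v^2)$ (the global-dimension hypothesis passes automatically from $S$ to $S[v]$), reducing the statement to the purely algebraic claim that $E(\theta_z)$ is $\mathbb Z_2$-semisimple if and only if $\widetilde{E}(\widetilde{\theta_z})$ is. The Clifford map $\widetilde{\theta_z}$ attached to $z+v^2$ should split as $\theta_z$ plus a one-variable piece in the $v$-direction, which exhibits $\widetilde{E}(\widetilde{\theta_z})$ as a super (that is, $\mathbb Z_2$-graded) tensor product of $E(\theta_z)$ with a single-generator Clifford factor $\kk[\epsilon]/(\epsilon^2-\lambda)$ for a suitable nonzero $\lambda$. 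Over $\kk=\mathbb C$ this extra factor is $\mathbb Z_2$-graded semisimple, so $\widetilde{E}(\widetilde{\theta_z})$ is $\mathbb Z_2$-semisimple precisely when $E(\theta_z)$ is, closing the chain. The technical heart of the argument is again the dictionary above; once the super-tensor decomposition of $\widetilde{E}(\widetilde{\theta_z})$ is in hand, the semisimplicity comparison is immediate.
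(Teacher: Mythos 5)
Your proposal follows essentially the same route as the paper: the Koszul-dual dictionary $(S[v])^!\cong\widetilde E$ with $z+v^2\leftrightarrow\widetilde{\theta_z}$ is exactly the setup the paper installs at the start of its final section, part (ii) is verbatim the paper's argument (Theorem \ref{thm1}(iii) applied to $A$ and $A^{\#\#}$, bridged by Proposition \ref{propt2}), and part (i) reduces, as in the paper, to comparing $\mathbb Z_2$-graded semisimplicity across the decomposition $\widetilde E(\widetilde{\theta_z})\cong E(\theta_z)\hat\otimes\mathbb C\mathbb G$ of Proposition \ref{propt1}. The only place you wave your hands is the claim that this last comparison is ``immediate'': that step is not quite free, and the paper supplies it as Corollary \ref{cort1} (equality of $\mathbb Z_2$-graded global dimensions under $-\hat\otimes\mathbb C\mathbb G$, proved via Lemma \ref{lemt3} and the Morita-type Lemma \ref{lemTE1}), which combined with the fact that a finite-dimensional $\mathbb Z_2$-graded Frobenius algebra is semisimple iff its graded global dimension is zero closes the argument cleanly.
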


For a concrete Koszul Artin-Schelter regular algebra $S$, Clifford deformations of the quadratic dual $E=S^!$ is relatively easy to determined. In the last section, we give detailed computations of Clifford deformations for the quadratic dual algebra of a concrete Koszul Artin-Schelter regular algebra $S$ of global dimension 3, and then give a list of all the possible noncommutative quadric hypersurfaces associated to $S$ (up to isomorphism). We also determine whether the obtained noncommutative hypersurfaces are isolated singulariteis or not.

\section{Preliminaries}

Let $\kk$ be a field of characteristic zero. A $\mathbb Z$-graded algebra $A=\oplus_{n\in\mathbb Z}A_n$ is called a {\it connected graded algebra} if $A_n=0$ for $n<0$ and $A_0=\kk$. Let $\Gr A$ denote the category whose objects are right graded $A$-modules, and whose morphisms are right $A$-module morphisms which preserve the gradings of modules. Let $\gr A$ denote the full subcategory of $\Gr A$ consisting of finitely generated graded $A$-modules. For a right graded $A$-module $M$ and an integer $l$, we write $M(l)$ for the right graded $A$-module whose $i$th part is $M(l)_i=M_{i+l}$.

For right graded $A$-modules $M$ and $N$, denote $\uHom_A(M,N)=\bigoplus_{i\in\mathbb Z}\Hom_{\Gr A}(M,N(i))$. Then $\uHom_A(M,N)$ is a $\mathbb Z$-graded vector space. Write $\uExt_A^i$ for the $i$th derived functor of $\uHom_A$. Hence $\uExt_A^i(M,N)$ is also a $\mathbb Z$-graded vector space for each $i\ge0$. We mention that if $A$ is Noetherian and $M$ is finitely generated, then $\uExt_A^i(M,N)\cong \Ext_A^i(M,N)$, the usual extension group in the category of right $A$-modules.

\begin{definition} \cite{AS} A connected graded algebra $A$ is called an {\it Artin-Schelter Gorenstein algebra} of injective dimension $d$ if
\begin{itemize}
  \item [(i)] $A$ has finite injective dimension $\id {}_AA=\id A_A=d<\infty$,
  \item [(ii)] $\uExt_A^i(\kk,A)=0$ for $i\neq d$, and $\uExt_A^d(\kk,A)\cong \kk(l)$,
  \item [(iii)] the left version of (ii) holds.
\end{itemize}
The number $l$ is called the {\it Gorenstein parameter}.
If further, $A$ has finite global dimension, then $A$ is called an {\it Artin-Schelter regular} algebra.
\end{definition}

We need the following lemma, which follows from the Rees-Lemma \cite[Proposition 3.4(b)]{Le}, or the Base-change for the spectral sequence \cite[Exercise 5.6.3]{We}.
\begin{lemma}\label{lem9} Let $A$ be an Artin-Schelter regular algebra of global dimension $d$ with Gorenstein parameter $l$. Let $z\in A_k$ be a central regular element of $A$. Then $A/Az$ is an Artin-Schelter Gorenstein algebra of injective dimension $d-1$ with Gorenstein parameter $l-k$.
\end{lemma}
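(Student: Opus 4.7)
The starting point is the short exact sequence of graded $A$-bimodules
$$0 \to A(-k) \xrightarrow{z\,\cdot} A \to B \to 0,$$
where $B := A/Az$ and exactness uses regularity of $z$; since $z$ is central, the same sequence is exact on both sides. The plan is to combine this sequence with a change-of-rings argument to transfer the Artin--Schelter condition from $A$ to $B$.

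First I would compute $\uExt_A^\ast(B, A)$. Applying $\uHom_A(-, A)$ to the sequence above and using $\uExt_A^{>0}(A, A) = 0$ together with $\uHom_A(A(-k), A) \cong A(k)$, I identify the connecting map with multiplication by $z$ on $A(k)$. This yields $\uHom_A(B, A) = 0$, $\uExt_A^1(B, A) \cong B(k)$, and $\uExt_A^i(B, A) = 0$ for $i \geq 2$. Thus $\uExt_A^\ast(B, A)$ is concentrated in cohomological degree one.

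Next, for an arbitrary graded right $B$-module $M$, I would invoke the Cartan--Eilenberg change-of-rings spectral sequence
$$E_2^{p,q} = \uExt_B^p\bigl(M, \uExt_A^q(B, A)\bigr) \Rightarrow \uExt_A^{p+q}(M, A).$$
By the previous step the $E_2$-page is concentrated in the single row $q = 1$, so the spectral sequence collapses and produces natural isomorphisms
$$\uExt_B^p(M, B)(k) \cong \uExt_A^{p+1}(M, A) \qquad \text{for all } p \geq 0.$$
Specializing to $M = \kk$ and using that $A$ is Artin--Schelter regular of global dimension $d$ with Gorenstein parameter $l$, the right-hand side vanishes unless $p + 1 = d$, and in that case equals $\kk(l)$. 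This gives $\uExt_B^{d-1}(\kk, B) \cong \kk(l-k)$ and vanishing otherwise, which is half of the Artin--Schelter Gorenstein condition. Applying the same isomorphism to arbitrary $M$, together with $\uExt_A^{>d}(-, A) = 0$, bounds $\id B_B \leq d-1$, and the nonvanishing at $p = d-1$ forces equality.

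The left-hand Artin--Schelter condition would follow by running the identical argument on the exact sequence of left $A$-modules. The only real bookkeeping to monitor is the internal degree shift $(k)$ picked up from the map $z\,\cdot$ in the defining sequence; this is exactly what turns $l$ into $l - k$, and I expect it to be the only place a shift error would be likely in what is otherwise a routine spectral-sequence collapse.
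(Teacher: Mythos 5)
Your proof is correct and is essentially a self-contained write-out of exactly what the paper defers to: the change-of-rings (base-change) spectral sequence, which the paper cites as \cite[Exercise 5.6.3]{We} (the alternative citation to Rees's lemma \cite[Proposition 3.4(b)]{Le} is really the collapsed form of the same spectral sequence, i.e.\ your isomorphism $\uExt_B^p(M,B)(k)\cong\uExt_A^{p+1}(M,A)$). One cosmetic slip: the map you identify with multiplication by $z$ is the induced map $\uHom_A(A,A)\to\uHom_A(A(-k),A)$, not a connecting homomorphism, but the identification and the resulting degree shift $(k)$, hence $l\mapsto l-k$, are right.
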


A locally finite connected graded algebra $A$ is called a {\it Koszul algebra} (cf. \cite{P}) if the trivial module $\kk_A$ has a free resolution $$\cdots\longrightarrow P^{n}\longrightarrow\cdots\longrightarrow P^1\longrightarrow P^0\longrightarrow\kk\longrightarrow0,$$ where $P^n$ is a graded free module generated in degree $n$ for each $n\ge0$. Locally finite means that each $A_i$ is of finite dimension. A Koszul algebra is a quadratic algebra in the sense that $A\cong T(V)/(R)$, where $V$ is a finite dimensional vector space and $R\subseteq V\otimes V$. For a Koszul algebra $A$, the {\it quadratic dual} of $A$ is the quadratic algebra $A^!=T(V^*)/(R^\bot)$, where $V^*$ is the dual vector space and $R^\bot\subseteq V^*\otimes V^*$ is the orthogonal complement of $R$. Note that $A^!$ is also a Koszul algebra.

A $\mathbb Z$-graded finite dimensional algebra $E$ is called a {\it graded Frobenius} algebra if there is an isomorphism of right graded $E$-modules $E\cong E^*(l)$ for some $l\in\mathbb Z$, or equivalently, there is a nondegenerate associative bilinear form $\langle-,-\rangle\colon E\times E\to \kk$ such that for homogeneous elements $a\in E_i,b\in E_j$, $\langle a,b\rangle=0$ if $i+j\neq l$. We will freely use the following connections between graded Frobenius algebras and Koszul Artin-Schelter regular algebras.

\begin{lemma}\cite[Proposition 5.10]{Sm}\label{lempre} Let $A$ be a Koszul algebra, and let $A^!$ be its quadratic dual. Then $A$ is Artin-Schelter regular if and only if $A^!$ is a graded Frobenius algebra.
\end{lemma}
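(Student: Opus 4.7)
The plan is to use the Koszul resolution of $\kk_A$ to compute $\uExt^*_A(\kk,A)$ explicitly and translate the Artin-Schelter conditions into structural statements about $A^!$. Since $A$ is Koszul, the minimal graded free resolution of $\kk_A$ is
\[
\cdots \longrightarrow (A^!_n)^* \otimes A(-n) \longrightarrow \cdots \longrightarrow (A^!_1)^* \otimes A(-1) \longrightarrow A \longrightarrow \kk \longrightarrow 0,
\]
with each term generated in the indicated degree. Applying $\uHom_A(-,A)$ termwise produces the complex
\[
K^\vee\colon\quad 0 \longrightarrow A \longrightarrow A^!_1 \otimes A(1) \longrightarrow A^!_2 \otimes A(2) \longrightarrow \cdots
\]
whose cohomology is $\uExt^*_A(\kk,A)$. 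Each term $A^!_n \otimes A(n)$ carries a natural left $A^!$-module structure via left multiplication on the first factor, and a direct check shows the Koszul differential is left $A^!$-linear.

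For the forward direction, assume $A$ is Artin-Schelter regular of dimension $d$ with Gorenstein parameter $l$. Minimality of the Koszul resolution together with $\gldim A = d$ forces $A^!_n = 0$ for $n > d$, so $A^!$ is finite dimensional with top degree $d$. The hypothesis $\uExt^d_A(\kk,A) \cong \kk(l)$ together with vanishing in lower degrees means $K^\vee$ has one-dimensional cohomology concentrated in position $d$ and internal degree $-l$. Restricting $K^\vee$ to internal degree $-l$, all terms to the left of $A^!_d \otimes A_0$ vanish by positivity of $A$, so $\dim A^!_d = 1$. In intermediate internal degrees, the vanishing of cohomology combined with the left $A^!$-module structure on $K^\vee$ forces the multiplication pairings $A^!_i \otimes A^!_{d-i} \to A^!_d \cong \kk$ to be perfect, which is exactly the graded Frobenius condition.

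The converse reverses the argument: if $A^!$ is graded Frobenius with top degree $d$, then Koszulity yields $\gldim A \leq d$ (with equality since $A^!_d \neq 0$), and the Frobenius pairing on $A^!$ pins down the ranks of the differentials in $K^\vee$, forcing $\uExt^i_A(\kk,A) = 0$ for $i < d$ and $\uExt^d_A(\kk,A) \cong \kk(l)$ for the appropriate shift $l$. The symmetric argument applied to the Koszul resolution of ${_A\kk}$ yields the left version of the Artin-Schelter condition, completing the equivalence. The main obstacle I expect is purely bookkeeping: tracking grading shifts and signs in the Koszul differential and verifying that the induced pairings on the $A^!_\bullet$ factors coincide with the intrinsic multiplication pairings on $A^!$. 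Once these identifications are made carefully, the equivalence is a direct consequence of the Koszul complex computation.
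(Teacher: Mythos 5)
The paper cites this as \cite[Proposition 5.1]{Sm} without supplying a proof, so there is no in-paper argument to compare against; the following is a critique of your proposal on its own terms. Your Koszul-complex strategy is the standard one, and the forward direction is essentially right once the key step is unpacked: vanishing of $\uExt^i_A(\kk,A)$ in internal degree $-i$ (for $i<d$) gives injectivity of the first nonzero differential $A^!_i\to A^!_{i+1}\otimes A_1$ in that degree, i.e.\ no nonzero element of $A^!_i$ is annihilated by $A^!_1$; a downward induction on $i$ then shows the multiplication map $A^!_i\to(A^!_{d-i})^*$ is injective, so $\dim A^!_i\le\dim A^!_{d-i}$, and running the same at index $d-i$ forces equality and hence a perfect pairing. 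Your sentence beginning ``In intermediate internal degrees\ldots'' compresses this induction into an assertion, but the idea is sound.

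The genuine gap is the converse, which does not simply ``reverse the argument.'' The forward direction consumed only one graded slice of the Gorenstein vanishing (cohomological degree $i$, internal degree $-i$), and knowing the pairings $A^!_i\otimes A^!_{d-i}\to\kk$ are perfect cannot be walked back to vanishing of $\uExt^i_A(\kk,A)$ in \emph{every} internal degree, which is what AS Gorenstein requires; ``the Frobenius pairing pins down the ranks of the differentials'' is a conclusion, not a mechanism. The standard route is structural rather than numerical: the Frobenius isomorphism $(A^!_n)^*\cong A^!_{d-n}$ identifies $K^\vee$, after the reindexing $n\mapsto d-n$ and an internal twist by $d$, with the original Koszul resolution $K$ of $\kk_A$ (one must check the differentials are intertwined; a Nakayama twist may enter but is harmless for exactness). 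Since $K$ is a resolution, $K^\vee$ is then exact away from position $d$, and the surviving cohomology is $\kk(d)$ there, which yields $\uExt^i_A(\kk,A)=0$ for $i<d$ and $\uExt^d_A(\kk,A)\cong\kk(d)$ in one stroke. The left Gorenstein condition follows by the symmetric construction, and $\gldim A=d=\injdim A$ then come out as you say.
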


\noindent{\bf Conventions.} In this paper, without otherwise stated, a graded algebra always means a $\mathbb Z$-graded algebra.

\section{Clifford deformation of Koszul Frobenius algebras}

Let $V$ be a finite dimensional vector space and $R\subseteq V\otimes V$. Suppose that $E=T(V)/(R)$ is a Koszul Frobenius algebra.

\begin{definition}\label{def1} {\rm Let $\theta:R\to \kk$ be a linear map. If $(\theta\otimes 1-1\otimes \theta)(V\otimes R\cap R\otimes V)=0$, then we call $\theta$ a {\it Clifford map} of the Koszul Frobenius algebra $E$, and call the associative algebra $$E(\theta)=T(V)/(r-\theta(r):r\in R)$$ the {\it Clifford deformation} of $E$ associated to $\theta$.}
\end{definition}

We view $T(V)$ as a filtered algebra with the filtration: $F_iT(V)=0$ for $i<0$, $F_iT(V)=\sum_{j=0}^i V^{\otimes j}$ for $i\ge0$. The filtration on $T(V)$ induces a filtration
\begin{equation}\label{eq1}
  0\subseteq F_0E(\theta)\subseteq F_1E(\theta)\subseteq\cdots \subseteq F_iE(\theta)\subseteq
\end{equation}
on $E(\theta)$ making $E(\theta)$  a filtered algebra. Let $gr E(\theta)$ be the graded algebra associated to the filtration (\ref{eq1}). The next result is a special case of \cite[Theorem 2.1, Chapter 5]{PP}.

\begin{proposition}\label{prop1} Let $E(\theta)$ be a Clifford deformation of $E$. Then $gr E(\theta)\cong E$ as graded algebras.
\end{proposition}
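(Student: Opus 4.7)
The plan is to derive this statement as a direct application of the general PBW theorem for inhomogeneous quadratic (here, quadratic-constant) deformations of Koszul algebras, which is the content of the cited Polishchuk–Positselski result.

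First I would fix the candidate morphism. Since $E(\theta)$ is a quotient of the filtered algebra $T(V)$ by the ideal generated by $\{r-\theta(r):r\in R\}$, and each relation $r-\theta(r)$ lies in $F_2T(V)$ with leading term $r\in V\otimes V$, the associated graded $\gr E(\theta)$ is generated in degree $1$ by the classes of elements of $V$, and every $r\in R$ becomes zero in $(\gr E(\theta))_2$ because $\theta(r)\in \kk$ lies in lower filtration degree. This produces a canonical surjection of graded algebras $\pi\colon E = T(V)/(R) \twoheadrightarrow \gr E(\theta)$.

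Second, I would invoke the PBW criterion. Writing the defining relations of $E(\theta)$ in the general form $r - \alpha(r) - \beta(r)$ with $\alpha\colon R\to V$ and $\beta\colon R\to\kk$, our setting is the special case $\alpha=0$ and $\beta=\theta$. The Braverman–Gaitsgory / Polishchuk–Positselski theorem asserts that $\pi$ is an isomorphism precisely when three Jacobi-type compatibility conditions hold on $V\otimes R\cap R\otimes V$. With $\alpha\equiv 0$, the condition involving only $\alpha$ and the mixed $\alpha$-$\beta$ condition are trivially satisfied, and the sole remaining condition collapses to $(\theta\otimes 1-1\otimes\theta)(V\otimes R\cap R\otimes V)=0$, which is exactly the hypothesis built into the definition of a Clifford map. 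Thus $\pi$ is an isomorphism.

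Third, for self-containedness one would want to recall why the criterion itself is valid for Koszul $E$. The argument amounts to showing that no spurious relations are forced in $\gr E(\theta)$ beyond those already in $R$; equivalently, that every element of the kernel of $T(V)\twoheadrightarrow E(\theta)$, upon taking leading terms with respect to the filtration, lands in the ideal $(R)\subseteq T(V)$. This is established by induction on filtration degree: the only obstruction to extending the relations to higher degree lives in $V^{\otimes 3}\cap(R\otimes V + V\otimes R)$ and is governed exactly by the Koszul diagonal $V\otimes R\cap R\otimes V$, where Koszulness of $E$ ensures that the cohomology controlling the deformation vanishes outside the Jacobi obstruction above. The main obstacle in writing out a self-contained proof is precisely this homological step — confirming that the Jacobi condition is not only necessary but sufficient, which is where Koszulness of $E$ is used essentially; for our purposes, however, it suffices to cite \cite[Theorem 2.1, Chapter 5]{PP}.
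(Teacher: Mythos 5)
Your proposal is correct and takes essentially the same route as the paper, which simply records the proposition as a special case of \cite[Theorem 2.1, Chapter 5]{PP} with no further argument. Your unpacking of that citation — constructing the canonical graded surjection $E\twoheadrightarrow\gr E(\theta)$, placing the relations in the Braverman--Gaitsgory form $r-\alpha(r)-\beta(r)$ with $\alpha=0$, $\beta=\theta$, and observing that the Jacobi conditions collapse to exactly the Clifford-map hypothesis $(\theta\otimes 1-1\otimes\theta)(V\otimes R\cap R\otimes V)=0$ — is a faithful and correct elaboration of why the cited result applies.
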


For later use, we define a linear transformation $\mathfrak{c}$ of $T(V)$ by setting $$\mathfrak{c}(v_1\otimes \cdots \otimes v_n)=\sum_{i=1}^nv_i\otimes\cdots\otimes v_n\otimes  v_1\otimes\cdots\otimes v_{i-1}$$ for $n\ge2$ and $v_1,\dots,v_n\in V$, $\mathfrak{c}(v)=v$ for all $v\in V$, and $\mathfrak{c}(1)=1$.

\begin{example}\label{ex1} Let $V$ be a finite dimensional vector space over the field of real numbers $\mathbb R$ with a basis $\{x_1,\dots,x_n\}$. Consider the exterior algebra $E=\wedge V$. Then $E=T(V)/(R)$, where $R$ is the subspace of $V\otimes V$ spanned by $x_ix_j+x_jx_i$ (for simplicity, we omit the notation $\otimes$) for all $1\leq i,j\leq n$. Then $V\otimes R\cap R\otimes V$ admits a basis: $x_i^3$ $(i=1,\dots, n)$, $\mathfrak{c}(x_i^2x_j)$ for $i\neq j$ $(i,j=1,\dots n)$, $\mathfrak{c}(x_ix_jx_k+x_jx_ix_k)$ for pair-wise different triples $(i,j,k)$ $(i,j,k=1,\dots,n)$. Define a linear map $\theta:R\to \kk$ by setting $\theta(x^2_i)=-1$ for $1\leq i\leq p$; $\theta(x_i^2)=1$ for $p+1\leq i\leq p+q$ where $p+q\leq n$; $\theta(x_i^2)=0$ for $i>p+q$ and $\theta(x_ix_j+x_jx_i)=0$ for $i\neq j$. Then it is easy to see that $\theta$ is a Clifford map of $E$, and $E(\theta)$ is a Clifford deformation of the exterior algebra $\wedge V$. In fact, $E(\theta)$ is isomorphic to the Clifford algebra $\mathbb R^{p,q}$ (cf. \cite[Proposition 15.5]{Po}) defined by the quadratic form $\rho(x)=x_1^2+\cdots+x_p^2-x_{p+1}^2-\cdots-x_{p+q}^2$.
\end{example}

\begin{example}\label{exa} Let $V=\kk x\oplus\kk y$, and let $E=T(V)/(R)$, where $R=\text{span}\{x^2,y^2,xy-yx\}$. Define a map $\theta\colon  R\to \kk$ by setting $\theta(x^2)=a$, $\theta(y^2)=b$ and $\theta(xy-yx)=0$, where $a, b$ are arbitrary elements in $\kk$. Note that $V\otimes R\cap R\otimes V=\text{span}\{x^3,y^3,x^2y-xyx+yx^2,xy^2-yxy+y^2x\}\subseteq V\otimes V\otimes V$. Then it is easy to check that $(\theta\otimes 1-1\otimes \theta)(V\otimes R\cap R\otimes V)=0$, hence $\theta$ is a Clifford map of $E$ and $E(\theta)$ is a Clifford deformation of $E$.
\end{example}

\begin{example} Let $V=\kk x\oplus\kk y$, and let $E=\wedge V$ be the exterior algebra. Then the generating relations of $E$ are $R=\text{span}\{x^2,y^2,xy+yx\}$. Define a map $\theta\colon R\to \kk$ by setting $\theta(x^2)=a$, $\theta(y^2)=b$ and $\theta(xy+yx)=c$, where $a,b,c$ are arbitrary elements in $\kk$. By Example \ref{ex1}, we have $V\otimes R\cap R\otimes V=\text{span}\{x^3,y^3,x^2y+xyx+yx^2,xy^2+yxy+y^2x\}$. It is not hard to check that $\theta$ is a Clifford map of $\wedge V$.
\end{example}

\begin{example}\label{ex3} Let $V=\kk x\oplus\kk y\oplus\kk z$, and let $E=T(V)/(R)$, where $R$ is the subspace of $V\otimes V$ spanned by $xz-zx,yz-zy,x^2-y^2,z^2,xy,yx$. Then $E$ is a Koszul Frobenius algebra. Indeed it is the quadratic dual algebra of the Artin-Schelter regular algebra of type $S_2$ (cf. \cite[Table 3.11, P.183]{AS}). One may check that $\dim (V\otimes R\cap R\otimes V)=10$ and $V\otimes R\cap R\otimes V$ has the following basis \begin{eqnarray*}
                z^3&=&z^3,\\
                yxy&=&yxy,\\
                xyx&=&xyx,\\
                (x^2z-xzx)-(y^2z-yzy)+(zx^2-zy^2)&=&-(xzx-zx^2)+(yzy-zy^2)+(x^2z-y^2z),\\
                (xyz-xzy)+zxy&=&-(xzy-zxy)+xyz,\\
                (x^3-xy^2)-y^2x&=&(x^3-y^2x)-xy^2,\\
                (yxz-yzx)+zyx&=&-(yzx-zyx)+yxz,\\
                xz^2-(zxz-z^2x)&=&z^2x+(xz^2-zxz),\\
                yz^2-(zyz-z^2y)&=&z^2y+(yz^2-zyz),\\
                x^2y+(yx^2-y^3)&=&yx^2+(x^2y-y^3),
                \end{eqnarray*} where the elements on the left hand side are written as elements in $V\otimes R$, and the elements on the right hand side in $R\otimes V$. Define a map $\theta\colon R\to \kk$ by setting $\theta(z^2)=1$, $\theta(xy)=1$, $\theta(yx)=1$, $\theta(x^2-y^2)=1$ and $\theta(\text{others})=0$. It is not hard to check that $\theta$ is a Clifford map of $E$, and hence $E(\theta)$ is a Clifford deformation of $E$.
\end{example}

The next example shows that not every Koszul Frobenius algebra admits a nontrivial Clifford deformation.

\begin{example}\label{ex2} Let $V=\kk x\oplus\kk y$, and let $E=T(V)/(R)$, where $R=\text{span}\{y^2,xy+yx,yx+x^2\}$. Then $E$ is a Koszul Frobenius algebra, which is the quadratic dual algebra of Jordan plane (cf. \cite[Introduction]{AS}). The space $V\otimes R\cap R\otimes V$ admits a basis $$\{y^3,xy^2+yxy+y^2x,2y^2x+xyx+yx^2+yxy+x^2y,xyx+x^3+2y^2x+2yx^2\}.$$ Let $\theta:R\to \kk$ be a Clifford map defined by $\theta(y^2)=a,\theta(xy+yx)=b,\theta(yx+x^2)=c$. Then the equations $(\theta\otimes1-1\otimes\theta)(2y^2x+xyx+yx^2+yxy+x^2y)=2ax-by=0$ and $(\theta\otimes1-1\otimes\theta)(xyx+x^3+2y^2x+2yx^2)=(2a+b)x-2cy=0$ imply that $a=b=c=0$.
\end{example}

Clifford maps of a Koszul Frobenius algebra are corresponding to the central elements of degree 2 of its quadratic dual algebras. Let $E=T(V)/(R)$ be a Koszul Frobenius algebra. Let $E^!=T(V^*)/(R^\bot)$ be the quadratic dual algebra of $E$. We may identify $R^*$ with $E^!_2$.
The next lemma is a special case of \cite[Proposition 4.1, Chapter 5]{PP}.

\begin{lemma} \label{lem6} Retain the notation as above. A linear map $\theta\colon R\to \kk$ is a Clifford map of $E$ if and only if $\theta$, when viewed as an element in $E^!_2$, is a central element of $E^!$.
\end{lemma}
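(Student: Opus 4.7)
The plan is to exploit quadratic duality systematically. First, I would fix the identification of $E^!_2$ with $R^*$: since $R^\perp \subseteq V^* \otimes V^*$ is the annihilator of $R$ and $\dim V < \infty$, the natural pairing induces $E^!_2 = (V^*\otimes V^*)/R^\perp \cong R^*$, so a linear map $\theta\colon R \to \kk$ is the same as an element of $E^!_2$. Since $E^!$ is generated in degree one, $\theta$ is central in $E^!$ if and only if $v^*\theta - \theta v^* = 0$ in $E^!_3$ for every $v^* \in V^*$.

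Next I would lift this commutator to $V^{*\otimes 3}$. Choose any $\tilde{\theta} \in V^* \otimes V^*$ with $\tilde{\theta}|_R = \theta$. Because the defining relations of $E^!$ in degree $3$ are $V^* \otimes R^\perp + R^\perp \otimes V^*$, centrality translates to
\[
v^* \otimes \tilde{\theta} - \tilde{\theta} \otimes v^* \in V^* \otimes R^\perp + R^\perp \otimes V^* \quad \text{for every } v^* \in V^*.
\]
Here I would invoke the standard fact for quadratic algebras over a field (using finite-dimensionality of $V$) that
\[
\bigl(V^* \otimes R^\perp + R^\perp \otimes V^*\bigr)^\perp = (V \otimes R) \cap (R \otimes V)
\]
inside $V^{\otimes 3}$. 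Thus the displayed containment is equivalent to the vanishing of $\langle v^* \otimes \tilde{\theta} - \tilde{\theta} \otimes v^*,\, \rho \rangle$ for every $\rho \in V \otimes R \cap R \otimes V$ and every $v^* \in V^*$.

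Finally I would compute this pairing explicitly. Writing the same $\rho$ in its two forms $\rho = \sum_i u_i \otimes r_i = \sum_j r_j' \otimes u_j'$ (with $u_i, u_j' \in V$ and $r_i, r_j' \in R$), the pairing evaluates to
\[
\sum_i v^*(u_i)\,\theta(r_i) - \sum_j \theta(r_j')\,v^*(u_j') = v^*\!\left( \sum_i \theta(r_i)u_i - \sum_j \theta(r_j')u_j' \right),
\]
and quantifying over $v^* \in V^*$ collapses this to the identity $\sum_i \theta(r_i) u_i = \sum_j \theta(r_j') u_j'$ in $V$, which is precisely $(1\otimes\theta - \theta\otimes 1)(\rho) = 0$. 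That is the Clifford condition of Definition \ref{def1}, so the two conditions match and the lemma follows.

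The only genuinely nontrivial ingredient is the duality identity $(V^* \otimes R^\perp + R^\perp \otimes V^*)^\perp = V \otimes R \cap R \otimes V$, which is the main obstacle if one does not appeal to \cite[Proposition 4.1, Chapter 5]{PP}; everything else is a direct unpacking of definitions under the pairing between $V^{*\otimes 3}$ and $V^{\otimes 3}$. Once that identity is in hand, the proof becomes a routine matching of two explicit conditions.
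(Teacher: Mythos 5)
Your argument is correct. Note that the paper offers no proof of its own for this lemma: it simply cites \cite[Proposition 4.1, Chapter 5]{PP}, so there is no in-paper proof to compare against. What you have written is a clean, self-contained derivation of exactly the duality underlying that citation. The three ingredients are all sound: (1) since $E^!$ is generated in degree one, centrality of $\theta\in E^!_2$ reduces to $v^*\theta-\theta v^*=0$ in $E^!_3$ for all $v^*\in V^*$, which after lifting becomes membership of $v^*\otimes\tilde\theta-\tilde\theta\otimes v^*$ in $V^*\otimes R^\perp+R^\perp\otimes V^*$ (a condition that is independent of the lift $\tilde\theta$, since any two lifts differ by an element of $R^\perp$); (2) the annihilator identity $(V^*\otimes R^\perp+R^\perp\otimes V^*)^\perp=(V\otimes R)\cap(R\otimes V)$ is standard finite-dimensional linear algebra, from $(A+B)^\perp=A^\perp\cap B^\perp$ together with $(W\otimes U)^\perp=W^*\otimes U^\perp$ and $U^{\perp\perp}=U$; (3) the explicit pairing computation correctly reduces the vanishing to $\sum_i\theta(r_i)u_i=\sum_j\theta(r_j')u_j'$, which is the condition $(\theta\otimes 1-1\otimes\theta)(\rho)=0$ of Definition \ref{def1} (your sign is the opposite of the paper's, but the vanishing condition is of course the same). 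So your proof is a correct and welcome unpacking of the black-box citation the paper relies on.
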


Since $E^!_2\cong R^*$, the above lemma shows that the set of the Clifford maps of $E$ is in one to one correspondence with the set of the central elements in $E_2^!$.

By Lemma \ref{lempre}, every Koszul Frobenius algebra is dual to a Koszul Artin-Schelter regular algebra. Let $S=T(U)/(R)$ be a Koszul Artin-Schelter regular algebra, and let $E:=S^!=T(U^*)/(R^\bot)$. Denote by $\pi_S\colon T(U)\to S$ the natural projection map.

Let $z\in S_2$ be a central element of $S$. Pick an element $r_0\in U\otimes U$ such that $\pi_S(r_0)=z$. Since $R^\bot\subseteq U^*\otimes U^*$, the element $r_0$ defines a map
\begin{equation}\label{eqtheta}
  \theta_z\colon R^\bot\to\kk,\text{ by setting }\theta_z(\alpha)=\alpha(r_0),\ \forall \alpha\in R^\bot.
\end{equation}

Lemma \ref{lem6} implies the following result.
\begin{lemma}\label{lem8} Retain the notation as above. The map $\theta_z\colon R^\bot\to\kk$ is a Clifford map of the Koszul Frobenius algebra $E=T(U^*)/(R^\bot)$.
\end{lemma}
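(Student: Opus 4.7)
My plan is to reduce the assertion to Lemma \ref{lem6} by identifying $\theta_z$, regarded as an element of $E^!_2$, with the central element $z \in S_2$ itself, via the canonical pairing arising from quadratic duality.

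First I would spell out the identifications. Since $E = T(U^*)/(R^\bot)$, by the standard involutive nature of quadratic duality we have $E^! = T(U)/((R^\bot)^\bot) = T(U)/(R) = S$; in particular $E^!_2 = S_2 = (U \otimes U)/R$. On the other hand the natural evaluation pairing $U \otimes U \times U^*\otimes U^* \to \kk$ restricts to a nondegenerate pairing $S_2 \times R^\bot \to \kk$ (it descends from $U \otimes U$ to $S_2 = (U\otimes U)/R$ because elements of $R^\bot$ kill $R$ by definition), yielding a canonical isomorphism $S_2 \cong (R^\bot)^*$.

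Next I would check that the functional $\theta_z \in (R^\bot)^*$ is exactly the image of $z \in S_2$ under this isomorphism. By definition $\theta_z(\alpha) = \alpha(r_0)$ where $r_0 \in U\otimes U$ is any lift of $z$ under $\pi_S\colon T(U) \to S$; this is well-defined (independent of the chosen lift) precisely because any two lifts differ by an element of $R$, on which every $\alpha \in R^\bot$ vanishes. Thus $\theta_z$ is the pairing functional $\alpha \mapsto \langle z, \alpha \rangle$, so under the identification $E^!_2 \cong S_2$ we have $\theta_z = z$.

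Finally, by hypothesis $z$ is a central element of $S$, hence $\theta_z$ is a central element of $E^!$ of degree $2$. Lemma \ref{lem6} then immediately gives that $\theta_z$ is a Clifford map of $E$. There is no real obstacle here: the content of the lemma is just the bookkeeping verifying that the definition \eqref{eqtheta} is the explicit form of the isomorphism $S_2 \xrightarrow{\sim} E^!_2$ translating ``central element of $S_2$'' into ``Clifford map on $R^\bot$''. The only mildly subtle point is the well-definedness of $\theta_z$ with respect to the choice of lift $r_0$, which as noted follows from $R \perp R^\bot$.
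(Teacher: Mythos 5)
Your proof is correct and follows exactly the paper's intended route: the paper itself states that Lemma \ref{lem6} implies Lemma \ref{lem8}, leaving implicit precisely the bookkeeping you spell out---namely that under the canonical identification $E^!_2 = S_2 \cong (R^\bot)^*$, the functional $\theta_z$ corresponds to the central element $z$. You also correctly isolate the well-definedness of $\theta_z$ (independence of the lift $r_0$, which the paper records separately in Remark \ref{rem2}) as the only point needing care.
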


\begin{remark}\label{rem2} Note that the Clifford map $\theta_z$ is independent of the choice of $r_0$. In fact, if $r'_0\in U\otimes U$ is another element such that $\pi_S(r'_0)=z$, then for every element $\alpha\in R^\bot$, one has $\alpha(r_0)=\alpha(r'_0)$. Henceforth, we say that $\theta_z$ is the {\it Clifford map of $E(=S^!)$ corresponding to the central element $z$}.
\end{remark}

\section{Clifford deformations as $\mathbb {Z}_2$-graded Frobenius algebras}

Let $G$ be a group, and let $A=\oplus_{g\in G}A_g$ be a $G$-graded algebra. Set $A^*:=\oplus_{g\in G}(A_{g^{-1}})^*$. Then $A^*$ is a $G$-graded $A$-bimodule, whose degree $g$ component is $(A_{g^{-1}})^*$. Let $M$ be a left $G$-graded $A$-module. For $g\in G$, let $M(g)$ be the left $G$-graded $A$-module whose degree $h$-component $M(g)_h$ is equal to $M_{hg}$.

Similar to $\mathbb Z$-graded Frobenius algebras, a $G$-graded algebra $A$ is called a {\it $G$-graded Frobenius algebra} (cf. \cite{DNN} for instance) if there is an element $g\in G$ and an isomorphism $\varphi\colon A\to A^*(g)$ of left $G$-graded $A$-modules. Equivalently, there is a homogenous bilinear form $\langle\ ,\ \rangle\colon A\times A\longrightarrow\kk(g)$ such that $\langle ab,c\rangle=\langle a,bc\rangle$, where $\kk(g)$ is the $G$-graded vector space concentrated in degree $g^{-1}$.

Let $E=T(V)/(R)$ be a Koszul Frobenius algebra. Let $\theta\colon R\to \kk$ be a Clifford map of $E$. We may view the $\mathbb Z$-graded algebra $T(V)$ as a $\mathbb{Z}_2$-graded algebra by setting $T(V)_0=\kk\oplus \bigoplus_{n\ge1}V^{\otimes 2n}$ and $T(V)_1=\bigoplus_{n\ge1}V^{\otimes 2n-1}$.

Consider the Clifford deformation $E(\theta)=T(V)/(r-\theta(r):r\in R)$. Since $R\subseteq V\otimes V$, it follows that $R_\theta=\text{span}\{r-\theta(r)| r\in R\}$ is a subspace of $T(V)_0$. Hence the ideal $(R_\theta)$ is homogeneous. Therefore we have the following observation.

\begin{lemma} \label{lem1}
Retain the notation as above. The Clifford deformation $E(\theta)$ is a $\mathbb{Z}_2$-graded algebra.
\end{lemma}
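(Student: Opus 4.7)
The plan is to verify directly that the defining ideal of $E(\theta)$ inside the $\mathbb{Z}_2$-graded algebra $T(V)$ is a homogeneous ideal, from which the grading descends to the quotient. So the first step is to fix the $\mathbb{Z}_2$-grading on $T(V)$ given in the paragraph preceding the statement, namely $T(V)_0 = \kk \oplus \bigoplus_{n\ge 1} V^{\otimes 2n}$ and $T(V)_1 = \bigoplus_{n\ge 1} V^{\otimes 2n-1}$, and check that this is indeed an algebra $\mathbb{Z}_2$-grading (it is, because $V^{\otimes a}\cdot V^{\otimes b}\subseteq V^{\otimes(a+b)}$ and $a+b \equiv a+b \pmod 2$).

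Next, I would examine the generators of the defining ideal. Each generator has the form $r - \theta(r)$ with $r \in R \subseteq V \otimes V$. Since $V \otimes V \subseteq T(V)_0$ and $\theta(r) \in \kk \subseteq T(V)_0$, every generator $r - \theta(r)$ lies in $T(V)_0$. Hence the set $\widetilde{R} := \{r - \theta(r) : r \in R\}$ is contained in the even part $T(V)_0$, so it consists of $\mathbb{Z}_2$-homogeneous elements of degree $0$.

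From this it follows at once that the two-sided ideal $(\widetilde{R})$ is a $\mathbb{Z}_2$-graded ideal of $T(V)$: any two-sided ideal generated by homogeneous elements is homogeneous with respect to the grading under which those generators are homogeneous. Therefore the quotient algebra $E(\theta) = T(V)/(\widetilde{R})$ inherits a well-defined $\mathbb{Z}_2$-grading, with $E(\theta)_i$ the image of $T(V)_i$ for $i = 0, 1$. This is exactly the assertion of the lemma.

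There is no genuine obstacle here; the whole point is the elementary observation that the $\theta$-deformed relations, unlike a general PBW-type deformation (which would destroy even the $\mathbb{Z}_2$-grading if it mixed $V^{\otimes 2}$ with $V$), only introduce scalar terms into relations already supported in even tensor degree. What this short argument really records is the compatibility that makes $E(\theta)$ a super-algebra and sets the stage for treating it alongside $\mathbb{Z}_2$-graded Frobenius structures in the subsequent sections.
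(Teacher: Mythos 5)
Your proof is correct and follows exactly the same route as the paper: observe that $R \subseteq V\otimes V$ and $\theta(r)\in\kk$ force each generator $r-\theta(r)$ to lie in $T(V)_0$, so the ideal $(\widetilde R)$ is $\mathbb{Z}_2$-homogeneous and the grading descends to the quotient. Nothing more is needed.
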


\begin{remark}\label{rem1} Note that the filtration (\ref{eq1}) induces a homomorphism $\psi\colon \gr E(\theta)\to E$ of $\mathbb Z$-graded algebras. By Proposition \ref{prop1}, $\psi$ is an isomorphism. By the definition of the $\mathbb Z_2$-grading of $E(\theta)$, we see $\dim E(\theta)_0=\dim (\bigoplus_{i\ge0}E_{2i})$ and $\dim E(\theta)_1=\dim \bigoplus_{i\ge0}E_{2i+1}$.
\end{remark}

Assume that the Frobenius algebra $E$ is of {\it Loevy length} $n$, that is, $E_n\neq 0$ and $E_{i}=0$ for all $i>n$. Since $E$ is $\mathbb Z$-graded Frobenius, $\dim E_n=1$.
Fix a nonzero map $\xi\colon E_n\to \kk$. Let $\phi\colon E(\theta)\to \kk$ be the composition of the following maps
\begin{equation}\label{eqs3}
  \phi\colon E(\theta)\overset{\pi}\to E(\theta)/F_{n-1}E(\theta)\overset{\psi}\to E_n\overset{\xi}\to \kk,
\end{equation}
where $F_{n-1}E(\theta)$ is the $(n-1)$-th part in the filtration (\ref{eq1}), and $\pi$ is the projection. Define a bilinear form $\langle\ ,\ \rangle\colon E(\theta)\times E(\theta)\longrightarrow\kk$ by setting
\begin{equation}\label{eq2}
  \langle a,b\rangle=\phi(ab)
\end{equation}
for all $a,b\in E(\theta)$.

\begin{lemma} \label{lem2} Retain the notation as above. The bilinear form $\langle\ ,\ \rangle$ is nondegenerated, and hence $E(\theta)$ is a Frobenius algebra.
\end{lemma}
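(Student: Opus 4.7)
The plan is to use the filtration $F_\bullet E(\theta)$ together with the identification $\gr E(\theta)\cong E$ from Proposition \ref{prop1}, and exploit the fact that $E$ is already a graded Frobenius algebra to transfer that nondegeneracy to $E(\theta)$ via leading terms.

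First I would note that $E(\theta)$ is finite dimensional with $\dim E(\theta)=\dim E$ by Proposition \ref{prop1}, and that the filtration terminates: $F_n E(\theta)=E(\theta)$ and $F_{n-1}E(\theta)\subsetneq E(\theta)$, because $E_i=0$ for $i>n$ while $E_n\neq 0$. This is the reason the composite $\phi$ in \eqref{eqs3} is well-defined and nonzero on the top piece. Since $\dim E(\theta)<\infty$, it suffices to verify left nondegeneracy of $\langle\,,\,\rangle$ (right nondegeneracy then follows automatically).

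Next, given a nonzero element $a\in E(\theta)$, let $i$ be the least integer with $a\in F_i E(\theta)$, and let $\bar a\in (\gr E(\theta))_i\cong E_i$ be its nonzero symbol. Because $E$ is a graded Frobenius algebra of Loewy length $n$, the multiplication pairing $E_i\times E_{n-i}\to E_n$ is nondegenerate, so there is $\bar b\in E_{n-i}$ with $\bar a\bar b\neq 0$ in $E_n$. Lift $\bar b$ to some $b\in F_{n-i}E(\theta)$. In the filtered algebra $E(\theta)$ the product $ab$ lies in $F_n E(\theta)$, and under the surjection $F_nE(\theta)\twoheadrightarrow F_nE(\theta)/F_{n-1}E(\theta)\cong E_n$ it maps to $\bar a\bar b$, which is nonzero. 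Hence $\phi(ab)=\xi(\bar a\bar b)\neq 0$, proving that $\langle a,-\rangle$ is not identically zero.

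The only delicate point is the symbol calculus step: one must be sure that multiplying $a\in F_i$ by $b\in F_{n-i}$ in the deformed algebra $E(\theta)$ produces an element whose symbol in degree $n$ really is $\bar a\bar b$, rather than being demoted to a lower filtration level by some cancellation coming from the relations $r-\theta(r)$. This is exactly the content of $\gr E(\theta)\cong E$ (Proposition \ref{prop1}): the associated graded multiplication is that of $E$, so if $\bar a\bar b\neq 0$ in $E_n$ then $ab\in F_nE(\theta)\setminus F_{n-1}E(\theta)$, and no lower-order correction can spoil the argument. I expect this to be the main conceptual obstacle to write carefully; the remainder of the proof is a straightforward chase through the definitions of $\phi$ and $\langle\,,\,\rangle$.
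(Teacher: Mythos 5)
Your proof is correct and follows essentially the same route as the paper's: take a nonzero $a$, locate its lowest filtration level $i$, pass to the nonzero symbol $\bar a$ in $\gr E(\theta)\cong E$, use nondegeneracy of the Frobenius pairing on $E$ to find $\bar b\in E_{n-i}$ with $\bar a\bar b\neq 0$, lift $\bar b$ to $b\in F_{n-i}E(\theta)$, and observe that $\phi(ab)=\xi(\bar a\bar b)\neq 0$ because the associated graded multiplication agrees with that of $E$ (Proposition \ref{prop1}). The one small simplification you make is invoking finite-dimensionality to reduce to one-sided nondegeneracy, whereas the paper runs the symmetric argument on the other side with a phrase ``similarly''; either is fine.
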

\begin{proof} For a nonzero element $a\in E(\theta)$, assume that $a\in F_iE(\theta)$ but $a\in\!\!\!\!\!/ F_{i-1}E(\theta)$. Write $\overline{a}$ for the corresponding element in $F_iE(\theta)/F_{i-1}E(\theta)$. Then $\overline{a}\neq 0$. Since $E$ is a Koszul Frobenius algebra, there is an element $b'\in E_{n-i}$ such that $\psi(\overline{a})b'\neq0$. Let $\overline{b}=\psi^{-1}(b')\in F_{n-i}E(\theta)/F_{n-i-1}E(\theta)$. Pick an  element $b\in F_{n-i}E(\theta)$ which is corresponding to the element $\overline{b}\in F_{n-i}E(\theta)/F_{n-i-1}E(\theta)$. Then $\langle a,b\rangle=\phi(ab)=\xi\psi\pi(ab)=\xi\psi(\overline{a}\overline{b})=\xi(\psi(\overline{a})\psi(\overline{b}))=\xi(\psi(\overline{a})b')\neq0$. Similarly, there is an element $c\in E(\theta)$ such that $\langle c, a\rangle\neq0$. Hence $\langle\ ,\ \rangle$ is nondegenerated.
\end{proof}
\begin{remark} That $E(\theta)$ is selfinjective follows from a more general theory. In fact, $\gr E(\theta)$ is a Frobenius algebra, then by \cite[Chapter I, Lemma 6.11 and Theorem 6.12]{LvO}, $E(\theta)$ is selfinjective. Lemma \ref{lem2} above shows that the bilinear form on $E(\theta)$ indeed inherits from the bilinear form of $E$.
\end{remark}

\begin{proposition}\label{prop4} Retain the notation as above. The bilinear form defined in (\ref{eq2}) is compatible with the $\mathbb Z_2$-grading of $E(\theta)$, and hence $E(\theta)$ is a $\mathbb Z_2$-graded Frobenius algebra.
\end{proposition}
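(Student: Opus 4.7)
The plan is to verify that the linear functional $\phi\colon E(\theta)\to \kk$ defined in \eqref{eqs3} is $\mathbb Z_2$-homogeneous of degree $\bar n$, where $n$ is the Loewy length of $E$. Once this is established, the identity $\langle a,b\rangle = \phi(ab)$ automatically gives $\langle E(\theta)_i, E(\theta)_j\rangle = 0$ for all $i+j\not\equiv n\pmod 2$, and combined with Lemma~\ref{lem2} this exhibits $E(\theta)$ as a $\mathbb Z_2$-graded Frobenius algebra with pairing concentrated in degree $\bar n$.

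The first step is to check that the filtration \eqref{eq1} is compatible with the $\mathbb Z_2$-grading of $E(\theta)$. Since the defining relations $r-\theta(r)$ all lie in $T(V)_0$, the ideal $I$ they generate is $\mathbb Z_2$-homogeneous, decomposing as $I=I_0\oplus I_1$; this is precisely what allows the $\mathbb Z_2$-grading of $T(V)$ to descend to $E(\theta)$ in Lemma~\ref{lem1}. Combining this with the obvious splitting $F_kT(V) = (F_kT(V)\cap T(V)_0)\oplus (F_kT(V)\cap T(V)_1)$ shows that $F_kE(\theta)$ inherits a direct-sum decomposition $F_kE(\theta)_0\oplus F_kE(\theta)_1$. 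Consequently the isomorphism $\psi\colon\gr E(\theta)\to E$ from Proposition~\ref{prop1} is $\mathbb Z_2$-graded, where $E$ carries the $\mathbb Z_2$-grading obtained by reducing its $\mathbb Z$-grading modulo $2$.

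Next, I would observe that since $F_kT(V)_p = F_{k-1}T(V)_p$ whenever $k\not\equiv p\pmod 2$, the subquotient $F_kE(\theta)/F_{k-1}E(\theta)$ is entirely concentrated in $\mathbb Z_2$-degree $\bar k$. Applying this at $k=n$ and using that $E$ has Loewy length $n$ (so $F_nE(\theta)=E(\theta)$), the quotient $E(\theta)/F_{n-1}E(\theta)\cong E_n$ sits in $\mathbb Z_2$-degree $\bar n$. Therefore, for any $x\in E(\theta)_i$ with $i\neq \bar n$, we have $\pi(x)=0$, and hence $\phi(x)=\xi\psi\pi(x)=0$. This shows $\phi$ is homogeneous of $\mathbb Z_2$-degree $\bar n$, finishing the proof.

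The only real subtlety lies in the first step, namely the compatibility of the filtration with the $\mathbb Z_2$-grading. This rests on the structural fact that the deformation relations live in $T(V)_0$ -- the very fact which produces the $\mathbb Z_2$-grading on $E(\theta)$ -- so no substantively new idea is required beyond what already appears in Lemma~\ref{lem1} and Proposition~\ref{prop1}.
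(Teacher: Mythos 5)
Your argument is correct and takes essentially the same approach as the paper: establish that the filtration \eqref{eq1} is compatible with the $\mathbb Z_2$-grading, deduce that $\phi$ is homogeneous of $\mathbb Z_2$-degree $\bar n$, and conclude by composing with multiplication. You simply spell out in detail the compatibility claim (via the decomposition of the defining ideal and the observation that $F_kT(V)_p=F_{k-1}T(V)_p$ when $k\not\equiv p\pmod 2$) which the paper states without elaboration.
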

\begin{proof} Assume that the Loevy length of $E$ is $n$. Note that the filtration (\ref{eq1}) is compatible with the $\mathbb Z_2$-grading. Then the map $\phi$ as defined by (\ref{eqs3}) is homogeneous, that is, $\phi\colon E(\theta)\to \kk(g)$, where $g\in \mathbb Z_2$ such that $g=1$ if $n$ is odd, or $g=0$ if $n$ is even. Since the $\langle\ ,\ \rangle$ is the composition of the multiplication of $E(\theta)$ and the map $\phi$, it follows that $\langle\ ,\ \rangle\colon E(\theta)\times E(\theta)\to\kk(g)$ with $g=1$ if $n$ is odd, or $g=0$ if $n$ is even.
\end{proof}

Recall from the definition of the $\mathbb Z_2$-grading on $E(\theta)$ that $E(\theta)_0$ is the quotient space of $\kk\oplus \bigoplus_{k\ge1}V^{\otimes 2k}$. Since the generating relations $R_\theta$ of $E(\theta)$ is concentrated in degree 0 part, we may view $V$ as a subspace of $E(\theta)_1$. Hence each element $a\in E(\theta)_0$ may be written as $a=b+k$ where $b$ is a sum of products of elements in $V$, and $k\in \kk$. Now assume that the Clifford map $\theta$ is non-trivial, and assume that $\theta(r)=k\neq0$ for some $r\in R\subseteq V\otimes V$. Suppose $r=\sum_{i=1}^l x_i\otimes y_i$. Then $\sum_{i=1}^lx_iy_i=k$ in $E(\theta)_0$. Hence $E(\theta)_0=E(\theta)_1E(\theta)_1$. In summary, we have the following result. Recall that a $G$-graded algebra $B=\oplus_{g\in G}B_g$ is said to be {\it strongly graded} \cite{NvO} if $B_gB_h=B_{gh}$ for all $g,h\in G$.

\begin{proposition}\label{prop3} If the Clifford map $\theta$ is nontrivial, then the $\mathbb Z_2$-graded algebra $E(\theta)$ is a strongly graded algebra.
\end{proposition}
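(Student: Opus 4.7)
My plan is to reduce the strong-grading condition $E(\theta)_g E(\theta)_h = E(\theta)_{g+h}$ (for $g,h\in\mathbb Z_2$) to the single assertion $1\in E(\theta)_1 E(\theta)_1$. Since $1\in E(\theta)_0$, the three equalities $E(\theta)_0 E(\theta)_0 = E(\theta)_0$, $E(\theta)_0 E(\theta)_1 = E(\theta)_1$, and $E(\theta)_1 E(\theta)_0 = E(\theta)_1$ are automatic. For the remaining equality $E(\theta)_1 E(\theta)_1 = E(\theta)_0$, the inclusion $E(\theta)_1 E(\theta)_1 \subseteq E(\theta)_0$ is built into the $\mathbb Z_2$-grading, and once $1 \in E(\theta)_1 E(\theta)_1$ one gets
\[
E(\theta)_0 = 1\cdot E(\theta)_0 \subseteq \bigl(E(\theta)_1 E(\theta)_1\bigr)\cdot E(\theta)_0 \subseteq E(\theta)_1 E(\theta)_1.
\]

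To produce $1 \in E(\theta)_1 E(\theta)_1$, I would use the nontriviality of $\theta$ exactly as in the paragraph preceding the statement. Since $\theta \neq 0$, choose $r = \sum_{i=1}^{\ell} x_i \otimes y_i \in R$ with $\theta(r) = k \neq 0$. Under the identification $V \subseteq E(\theta)_1$ (valid because the defining relations $\widetilde R$ lie in $\mathbb Z_2$-degree $0$), the relation $r - \theta(r) = 0$ becomes $\sum_{i=1}^{\ell} x_i y_i = k$ in $E(\theta)_0$. Each product $x_i y_i$ lies in $E(\theta)_1 E(\theta)_1$, so dividing by $k \in \kk^\times$ gives $1 \in E(\theta)_1 E(\theta)_1$, as required.

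There is essentially no obstacle: the statement is a direct consequence of the computation already spelled out in the text, and the only subtlety is the (routine) translation of \emph{strongly graded} into the single membership $1 \in E(\theta)_1 E(\theta)_1$. I would note that the argument implicitly relies on Proposition \ref{prop1}, which guarantees that $V$ actually injects into the filtered/graded piece $E(\theta)_1$, so that the products $x_i y_i$ genuinely sit in $E(\theta)_1 E(\theta)_1$ rather than collapsing in some unexpected way.
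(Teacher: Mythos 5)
Your proposal is correct and follows essentially the same route as the paper: both proofs hinge on the observation that nontriviality of $\theta$ forces a relation $\sum_i x_i y_i = k \neq 0$ with $x_i, y_i \in V \subseteq E(\theta)_1$, whence $1 \in E(\theta)_1 E(\theta)_1$ and $E(\theta)_0 = E(\theta)_1 E(\theta)_1$. The only (minor) difference is organizational: you make explicit the ``unit trick'' ($E(\theta)_0 = 1\cdot E(\theta)_0 \subseteq E(\theta)_1 E(\theta)_1 E(\theta)_0 \subseteq E(\theta)_1 E(\theta)_1$), whereas the paper reaches the same conclusion by describing elements of $E(\theta)_0$ directly as sums of even-length products of elements of $V$ plus scalars.
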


\section{Clifford deformations from localizations}

Let $E$ be a Koszul Frobenius algebra. Let $B$ be a graded algebra which is generated in degree 1. Assume that $B$ is a graded extension of $E$ by a central regular element of degree 2, that is, $B$ is a quadratic graded algebra, and there is a central regular element $z\in B_2$ such that $E=B/Bz$. It follows that the degree 1 part of $E$ and $B$ are equal. Since $E$ is a Koszul algebra, $B$ is also a Koszul algebra (cf. \cite[Theorem 1.2]{ST}).

Assume $B=T(V)/(R)$ with $R\subseteq V\otimes V$. Let $\pi:T(V)\to B$ be the projection map. Pick an element $r_0\in V\otimes V$ such that $\pi(r_0)=z$. Since $E=B/Bz$, it follows $E=T(V)/(\kk r_0+R)$. Let $R'=\kk r_0\oplus R$. Define a map
\begin{equation}\label{eq4}
  \theta\colon R'\to \kk, r_0\mapsto 1, r\mapsto 0\text{ for all }r\in R.
\end{equation}

Let us check the elements in $R'\otimes V\cap V\otimes R'$. Assume that $\{r_1,\dots,r_t\}$ is a basis of $R$. For each element $\alpha\in R'\otimes V\cap V\otimes R'$, we have
\begin{equation}\label{eq3}
  \alpha=v_0\otimes r_0+\sum_{i=1}^tv_i\otimes r_i=r_0\otimes v'_0+\sum_{i=1}^tr_i\otimes v'_i,
\end{equation}
where $v_0,\dots,v_t,v'_0,\dots,v'_t\in V$.

\begin{lemma}\label{lem4} In Equation (\ref{eq3}), $v_0=v'_0$.
\end{lemma}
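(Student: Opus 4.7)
The plan is to exploit the two defining properties of $z$: that it is central in $B$, and that it is regular. Rearranging the identity $(\ref{eq3})$ gives
\[
v_0\otimes r_0 - r_0\otimes v'_0 \;=\; \sum_{i=1}^t r_i\otimes v'_i \;-\; \sum_{i=1}^t v_i\otimes r_i,
\]
so the left-hand side lies in $R\otimes V + V\otimes R \subseteq V^{\otimes 3}$. This is the one identity I intend to feed into $B$.

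Next I would apply the multiplication map $\mu\colon V^{\otimes 3}\to B_3$ induced by $\pi$. Since every $r_i$ lies in $R$ and $\pi(R)=0$, the right-hand side maps to $0$. On the left, writing $\bar v_0 = \pi(v_0)$ and $\bar v'_0=\pi(v'_0)$ in $B_1$, we get
\[
\bar v_0 \cdot z \;-\; z \cdot \bar v'_0 \;=\; 0 \quad \text{in } B_3.
\]
Because $z$ is central in $B$, the first term equals $z\cdot \bar v_0$, so $z(\bar v_0 - \bar v'_0)=0$. Regularity of $z$ then forces $\bar v_0 = \bar v'_0$ in $B_1$.

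To conclude $v_0=v'_0$ in $V$, I would invoke the fact that $B$ is generated in degree $1$ with quadratic relations $R\subseteq V\otimes V$; hence the projection $\pi$ restricts to the identification $V \xrightarrow{\sim} B_1$, and the equality lifts back from $B_1$ to $V$.

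The only subtle point is the last identification $V\cong B_1$, which is where the hypothesis that $B$ is quadratic (in particular, that the relations sit in degree $2$) is actually used; everything else is a formal consequence of centrality and regularity of $z$. I do not anticipate any other obstacle, since the argument is essentially a one-line application of ``$z$ central and regular'' after reading the equation modulo $R$.
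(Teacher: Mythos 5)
Your proof is correct and follows essentially the same route as the paper's: push Equation (3), modulo $R\otimes V + V\otimes R$, through the multiplication map into $B$, then use centrality of $z$ to get $z(\bar v_0 - \bar v'_0)=0$ and regularity to conclude $\bar v_0 = \bar v'_0$, finishing with the identification $V\cong B_1$. The only cosmetic difference is that the paper first substitutes $v'_0 = v_0 + u$ and shows $u=0$; your version skips that substitution and is slightly cleaner.
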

\begin{proof} Assume $v'_0=v_0+u$, for some $u\in V$. By Equation (\ref{eq3}), we have $$v_0\otimes r_0+\sum_{i=1}^tv_i\otimes r_i=r_0\otimes v_0+r_0\otimes u+\sum_{i=1}^tr_i\otimes v'_i.$$
Hence $$r_0\otimes v_0-v_0\otimes r_0+r_0\otimes u=\sum_{i=1}^tv_i\otimes r_i-\sum_{i=1}^tr_i\otimes v'_i.$$ Since the right hand side of the above equation lies in $R\otimes V+V\otimes R$, we have the following identity in the algebra $B$: $$\pi(r_0)\pi(v_0)-\pi(v_0)\pi(r_0)-\pi(r_0)\pi(u)=0.$$ Note that $z=\pi(r_0)$ and $z$ is a central regular element in $B$. It follows that $\pi(u)=0$. Since $\pi$ is injective when it is restricted to $V$, it follows that $u=0$.
\end{proof}

\begin{lemma}\label{lem5} Retain the above notation. The map $\theta$ as defined in (\ref{eq4}) is a Clifford map of $E$.
\end{lemma}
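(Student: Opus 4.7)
The plan is to verify the defining condition $(\theta\otimes 1 - 1\otimes\theta)(V\otimes R' \cap R'\otimes V)=0$ directly from Definition \ref{def1}, where $R' = \kk r_0 \oplus R$ is the generating relation space of $E$. Since $\theta$ is given explicitly by $\theta(r_0)=1$ and $\theta(r)=0$ for $r\in R$, the computation of the two contractions $\theta\otimes 1$ and $1\otimes\theta$ on an element $\alpha$ of the intersection reduces to picking out the coefficient of $r_0$ in the two representations of $\alpha$ afforded by Equation (\ref{eq3}).

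Concretely, I would take an arbitrary element $\alpha\in V\otimes R' \cap R'\otimes V$ and write it in the two forms of Equation (\ref{eq3}). Applying $1\otimes\theta$ using the first expression gives
\[
(1\otimes\theta)(\alpha)=v_0\,\theta(r_0)+\sum_{i=1}^t v_i\,\theta(r_i)=v_0,
\]
since $\theta$ vanishes on $R$. Applying $\theta\otimes 1$ using the second expression gives
\[
(\theta\otimes 1)(\alpha)=\theta(r_0)\,v'_0+\sum_{i=1}^t \theta(r_i)\,v'_i=v'_0.
\]
Therefore $(\theta\otimes 1-1\otimes\theta)(\alpha)=v'_0-v_0$.

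At this point the statement follows immediately from Lemma \ref{lem4}, which asserts exactly that $v_0=v'_0$ in any such decomposition of $\alpha$. In other words, the central regularity of $z=\pi(r_0)$ in $B$, which was the substance of Lemma \ref{lem4}, is precisely what is needed to make the linear obstruction $v'_0-v_0$ vanish. Hence $\theta$ satisfies the Clifford condition and defines a Clifford deformation $E(\theta)$.

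There is no real obstacle here beyond the bookkeeping: the only delicate point is the appeal to Lemma \ref{lem4}, and the rest is a direct evaluation of $\theta$ on the two presentations of a generic element of $V\otimes R'\cap R'\otimes V$. I would also remark in passing that under the identification of $\theta$ with an element of $E^!_2$ via Lemma \ref{lem6}, this $\theta$ corresponds to the central element dual to $z$, so the construction is consistent with Remark \ref{rem2}.
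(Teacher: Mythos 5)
Your proof is correct and follows the same approach as the paper: write a generic element of $V\otimes R'\cap R'\otimes V$ in the two forms afforded by Equation (\ref{eq3}), evaluate the two contractions to get $v_0$ and $v'_0$ respectively, and invoke Lemma \ref{lem4} to conclude they are equal. You have simply made explicit the computation that the paper compresses into ``it is easy to see.''
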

\begin{proof} By Lemma \ref{lem4}, each element $\alpha\in V\otimes R'\cap R'\otimes V$ may be written as $\alpha=v_0\otimes r_0+\sum_{i=1}^tv_i\otimes r_i=r_0\otimes v_0+\sum_{i=1}^tr_i\otimes v'_i$ for some $v_0,\dots,v_t,v'_1,\dots,v'_t\in V$. Then it is easy to see $(\theta\otimes 1-1\otimes \theta)(\alpha)=0$. \end{proof}

Let $\widetilde{R}=\kk(r_0-1)\oplus R$. Then the Clifford deformation of $E$ may be written as $E(\theta)=T(V)/(\widetilde{R})$. Since $B=T(V)/(R)$, we have a natural algebra morphism $f\colon B\to E(\theta)$ such that the diagram
$$\xymatrix{
  T(V) \ar[d]_{\pi} \ar[r]^{\pi_\theta} &  E(\theta)      \\
  B \ar[ur]_{f}                    } $$
commutes, where $\pi_\theta$ is the projection map. Then $f(z)=f(\pi(r_0))=\pi_\theta(r_0)=1$.

Note that $z$ is a central regular element of $B$. Let $B[z^{-1}]$ be the localization of $B$ by the multiplicative set $\{1,z,z^2,\dots\}$. Since $f\colon B\to E(\theta)$ is an algebra morphism such that $f(z)=1$, it induces an algebra morphism $\tilde{f}\colon B[z^{-1}]\to E(\theta)$ such that the following diagram
$$\xymatrix{
  B \ar[d]_{\iota} \ar[r]^{f} &  E(\theta)      \\
  B[z^{-1}] \ar[ur]_{\tilde{f}}                    } $$commutes, where $\iota$ is the inclusion map.
Note that the algebra $B[z^{-1}]$ is a $\mathbb Z$-graded algebra, and $E(\theta)$ is a $\mathbb Z_2$-graded algebra. The next result shows that the degree zero parts $B[z^{-1}]_0$ and $E(\theta)_0$ are isomorphic as algebras, which is motivated by \cite[Lemma 5.1]{SvdB}.

\begin{proposition}\label{prop2} The algebra morphism $\tilde{f}$ induces an isomorphism of algebras $B[z^{-1}]_0\cong E(\theta)_0$.
\end{proposition}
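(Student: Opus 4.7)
The plan is to exploit the $\mathbb{Z}$-grading on $B[z^{-1}]$ (with $\deg z = 2$) versus the $\mathbb{Z}_2$-grading on $E(\theta)$. Since $\tilde{f}$ sends $V \subseteq B_1$ into $E(\theta)_1$ and sends $z \in B_2$ to $1 \in E(\theta)_0$, multiplicativity forces $\tilde{f}(B[z^{-1}]_{2k}) \subseteq E(\theta)_0$ for every $k \in \mathbb{Z}$. In particular $\tilde{f}$ restricts to an algebra homomorphism $\tilde{f}_0 \colon B[z^{-1}]_0 \to E(\theta)_0$, and the task reduces to proving bijectivity.

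Surjectivity is essentially a lifting computation. By the description preceding Proposition \ref{prop3}, $E(\theta)_0$ is spanned by $1$ and even-length monomials $v_1 \cdots v_{2k}$ in the generators $v_i \in V \hookrightarrow E(\theta)_1$; each such monomial equals $\tilde{f}_0(v_1 \cdots v_{2k} \cdot z^{-k})$, where $v_1 \cdots v_{2k} \in B_{2k}$. For injectivity, I first identify $\ker f$: since $E(\theta) = T(V)/(R + \kk(r_0 - 1)) = B/(z-1)B$ and $z - 1$ is central in $B$, we have $\ker f = (z-1)B$. Clearing denominators, every element of $\ker \tilde{f}_0$ can be written as $b z^{-K}$ with $b \in B_{2K} \cap (z-1)B$, so injectivity reduces to the key lemma that $B_{2K} \cap (z-1)B = 0$.

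This lemma is the main obstacle, and it is where the regularity of $z$ enters crucially. The argument I have in mind is a homogeneous-components calculation: writing $b = (z-1)c$ with $c = \sum_n c_n$ the $\mathbb{Z}$-homogeneous decomposition, the degree-$n$ component of $b$ is $z c_{n-2} - c_n$. Matching against $b \in B_{2K}$ forces $c_n = z c_{n-2}$ for every $n \ne 2K$, together with $b = z c_{2K-2} - c_{2K}$. Starting from $c_{-1} = c_{-2} = 0$, induction gives $c_n = 0$ for all $n < 2K$; on the upper side, the chain $c_{2K + 2j} = z^j c_{2K}$ combined with the finite support of $c$ forces $z^m c_{2K} = 0$ for some $m$, whence $c_{2K} = 0$ by regularity of $z$. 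Hence $b = 0$, completing injectivity. Conceptually this is the standard fact that $B$ is a Rees-type algebra for the filtered algebra $E(\theta)$ (with respect to the filtration (\ref{eq1}), after identifying $E(\theta) \cong B/(z-1)B$), so that $B[z^{-1}]_0 \cong E(\theta)_0$ is a formal consequence; but the direct graded computation above is concrete and bypasses having to develop that machinery.
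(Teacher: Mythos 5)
Your proof is correct, and it takes a genuinely different route from the paper's on the injectivity step. The paper establishes surjectivity the same way you do (by observing $\tilde f(B[z^{-1}]_0)=\sum_i f(B_{2i})=E(\theta)_0$) but then proves injectivity by a dimension count: it invokes Remark 2.7 for $\dim E(\theta)_0=\dim\bigoplus_{i\ge0}E_{2i}$ and cites Smith--Van den Bergh's Lemma 5.1(3) for the equality $\dim B[z^{-1}]_0=\dim\bigoplus_{i\ge0}E_{2i}$, concluding that a surjection between equidimensional spaces is an isomorphism. You instead identify $\ker f=(z-1)B$ (correct, since $E(\theta)=T(V)/\bigl((R)+(r_0-1)\bigr)=B/(z-1)B$ and $z-1$ is central) and reduce injectivity to the lemma $B_{2K}\cap(z-1)B=0$, which you prove by the homogeneous-component recursion $c_n=zc_{n-2}$ ($n\ne 2K$): the lower tail vanishes by induction from $c_{-1}=c_{-2}=0$, and the upper tail $c_{2K+2j}=z^jc_{2K}$ together with finite support and regularity of $z$ forces $c_{2K}=0$, hence $b=-c_{2K}+zc_{2K-2}=0$. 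This is self-contained (no appeal to \cite{SvdB}), isolates exactly where the regularity of $z$ is used, and reflects the Rees-algebra picture you mention; the paper's route is shorter but outsources the nontrivial dimension computation. One cosmetic point: state the edge case $K=0$ explicitly (the recursion then skips $n=0$, so the lower induction starts empty and the upper chain $c_{2j}=z^jc_0$ alone gives $c_0=0$), and note that you also need the odd components $c_{2K+2j+1}$ to vanish, which follows since $c_{2K-1}=0$ feeds the odd chain upward.
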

\begin{proof} Since $z$ is of degree 2, it follows that $B[z^{-1}]_0=\sum_{i\ge0}B_{2i}z^{-i}$. Then $$\tilde{f}(B[z^{-1}]_0)=\sum_{i\ge0}f(B_{2i})=E(\theta)_0$$ by the definition of $E(\theta)$ and the hypothesis $E=B/Bz$. Hence $\tilde{f}$ induces an epimorphism $B[z^{-1}]_0\to E(\theta)_0$. By Remark \ref{rem1}, $\dim E(\theta)_0=\dim(\bigoplus_{i\ge0}E_{2i})$. By Lemma \cite[Lemma 5.1(3)]{SvdB} and its proof, we have $\dim B[z^{-1}]_0=\dim(\bigoplus_{i\ge0}E_{2i})=\dim E(\theta)_0$. Hence the restriction of $\tilde{f}$ to $B[z^{-1}]_0$ yields an isomorphism of algebras $B[z^{-1}]_0\cong E(\theta)_0$.
\end{proof}

The structure of $E(\theta)_0$ can be easily determined if the Koszul Frobenius algebra $E$ has lower dimensions (cf. Section \ref{sec-exam}, for detailed computations of $E(\theta)_0$).

\section{Noncommutative quadric hypersurfaces}

Let $A$ be a noetherian Artin-Schelter Gorenstein algebra. Let $M$ be a right graded $A$-module. An element $m\in M$ is called a torsion element, if $mA_{\ge n}=0$ for some $n\ge0$. Let $\Gamma(M)$ be the submodule of $M$ consisting of all the torsion elements. Since $A$ is noetherian, we obtain a functor $\Gamma:\gr A\longrightarrow\gr A$. It is easy to see $\Gamma\cong \underrightarrow{\lim}\uHom_A(A/A_{\ge n},-)$. The $i$th right derived functor of $\Gamma$ is written as $R^i\Gamma$.

For a finitely generated right graded $A$-module $M$, {\it depth} of $M$ is defined to be the number $$\depth(M)=\min\{i|R^i\Gamma(M)\neq 0\}.$$ Assume that $\id {}_AA=\id A_A=d$. Then $M$ is called a {\it maximal Cohen-Macaulay} (or {\it Gorenstein projective}) module if $\depth(M)=d$. Let $\mcm A$ be the subcategory of $\gr A$ consisting of all the maximal Cohen-Macaulay modules. The additive category $\mcm A$ is a Frobenius category with enough projectives and injectives. Let $\underline{\mcm} A$ be the stable category of $\mcm A$. Then $\underline{\mcm} A$ is a triangulated category.

Now let $S=T(V)/(R)$ be a Koszul Artin-Schelter regular algebra. Let $z\in S_2$ be a central regular element of $S$. The quotient algebra $A=S/Sz$ is usually called a (noncommutative) {\it quadric hypersurface}.

Let $\pi_S\colon T(V)\to S$ be the natural projection map. Pick an element $r_0\in V\otimes V$ such that $\pi_S(r_0)=z$. Denote the quadratic dual algebra of $S$ by $E=T(V^*)/(R^\bot)$. Then $E$ is a Koszul Frobenius algebra (cf. Lemma \ref{lempre}).
Since $A=S/Sz$, it follows $A\cong T(V)/(\kk r_0+R)$. As before, write $R'$ for the space $\kk r_0+R$. Note that $\kk r_0\cap R=0$. Then we have $V\otimes V=\kk r_0\oplus R\oplus R''$ for some subspace $R''\subseteq V\otimes V$. Define a linear map $$r^*_0\colon V\otimes V\to \kk$$ by setting $r^*_0(r_0)=1$ and $r_0^*(R)=r_0^*(R'')=0$. We view $r_0^*$ as an element of $V^*\otimes V^*$.

Consider the quadratic dual algebra $A^!=T(V^*)/({R'}^\bot)$. Note that ${R'}^\bot=(\kk r_0)^\bot\cap R^\bot$. On the other hand, ${R'}^\bot+\kk r_0^*=(\kk r_0)^\bot\cap R^\bot+\kk r_0^*=R^\bot$.  Let $\pi_{A^!}:T(V^*)\to A^!$ be the projection map. Then $$w:=\pi_{A^!}(r^*_0)\neq 0.$$
We next check that $w$ is a central element of $A^!$. Note that $A^!_3\cong (R'\otimes V\cap V\otimes R')^*$ as vector spaces. For every $\alpha\in R'\otimes V\cap V\otimes R'$, by Lemma \ref{lem4}, $\alpha=v_0\otimes r_0+\sum_{i=1}^tv_i\otimes r_i=r_0\otimes v_0+\sum_{i=1}^tr_i\otimes v'_i,$
for some $v_0,\dots,v_t,v'_0,\dots,v'_t\in V$, where $r_1,\dots r_t$ is a basis of $R$. Taking an element $f\in V^*\cong A^!_1$, we have $(fw)(\alpha)=f(v_0)r^*_0(r_0)=f(v_0)$, and $(wf)(\alpha)=r^*_0(r_0)f(v_0)=f(v_0)$. Hence $fw=wf$, for all $f\in V^*$. Therefore $w$ is a central element.
Now, it follows $$E\cong A^!/A^!w.$$ Moreover, $w$ is also a regular element of $A^!$ (cf. \cite[Lemma 5.1(2)]{SvdB}).

For a $\mathbb Z_2$-graded algebra $B$, we write $\Gr_{\mathbb Z_2}B$ (resp. $\gr_{\mathbb Z_2}B$) for the category of right $\mathbb Z_2$-graded modules (resp. finitely generated $\mathbb Z_2$-graded modules), whose morphisms are degree 0 right $\mathbb Z_2$-graded morphisms. We arrive at our main observation of the paper, which is a slight improvement of \cite[Proposition 5.2]{SvdB}.

\begin{theorem}\label{thm1} Let $S=T(V)/(R)$ be a noetherian Koszul Artin-Schelter regular algebra, and let $E:=T(V^*)/(R^\bot)$ be the quadratic dual algebra of $S$. Assume that $z\in S_2$ is a central regular element of $S$. Let $\theta_z: R^\bot\to \kk$ be the map as defined in (\ref{eqtheta}). We have the following statements.
\begin{itemize}
  \item [(i)] $\theta_z$ is a Clifford map of $E$, and hence $E(\theta_z)$ is a Clifford deformation of $E$.
  \item [(ii)] \cite[Lemma 5.1(1)]{SvdB} Let $A=S/Sz$. Then $A$ is a Koszul Artin-Schelter Gorenstien algebra.
  \item[(iii)] There is an equivalence of triangulated categories $$\underline{\mcm} A\cong D^b(\gr_{\mathbb Z_2} E(\theta_z)).$$
\end{itemize}
\end{theorem}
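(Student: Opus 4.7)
The strategy is to bridge $D^b(\gr_{\mathbb Z_2} E(\theta_z))$ and $\underline{\mcm}\,A$ through Smith--Van den Bergh's finite dimensional algebra $C(A) := A^![w^{-1}]_0$, which is already known by \cite[Proposition 5.2]{SvdB} to govern the stable category of maximal Cohen--Macaulay modules over $A$. Propositions \ref{prop2} and \ref{prop3} of the present paper will identify $\mmod C(A)$ with $\gr_{\mathbb Z_2} E(\theta_z)$.

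The first step is to verify that the general extension setup of Proposition \ref{prop2} applies, with $B = A^!$ and central regular element $w \in A^!_2$ satisfying $A^!/A^!w \cong E$. The Clifford map produced by that proposition---sending a lift of $w$ to $1$ and vanishing on $(R')^\bot$---agrees, after the natural splitting $R^\bot \cong \kk r_0^* \oplus (R')^\bot$, with the map $\theta_z$ defined in~(\ref{eqtheta}). Consequently Proposition \ref{prop2} provides an algebra isomorphism $C(A) \cong E(\theta_z)_0$.

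The second step uses the $\mathbb Z_2$-grading. Since $z$ is regular, it is nonzero, so if $\theta_z$ were trivial we would have $r_0 \in R$ and hence $z = \pi_S(r_0) = 0$, a contradiction; therefore $\theta_z$ is nontrivial and Proposition \ref{prop3} applies to show that $E(\theta_z)$ is strongly $\mathbb Z_2$-graded. Dade's theorem for strongly graded rings then yields an equivalence of abelian categories $\gr_{\mathbb Z_2} E(\theta_z) \cong \mmod E(\theta_z)_0$ via the degree-zero functor, which passes to an equivalence $D^b(\gr_{\mathbb Z_2} E(\theta_z)) \cong D^b(\mmod C(A))$. Composing with the Smith--Van den Bergh equivalence yields the theorem.

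The main obstacle is confirming that Smith--Van den Bergh's result is available in the precise triangulated form $\underline{\mcm}\,A \cong D^b(\mmod C(A))$ and that the composite equivalence respects the triangulated structure (shift, distinguished triangles) coherently. If the cited statement is only phrased at the level of an embedding or an abelian-category equivalence, one can promote it by invoking Buchweitz's identification $\underline{\mcm}\,A \cong D_{sg}(A)$ and exhibiting a tilting object in $D_{sg}(A)$ with endomorphism algebra $C(A)$; either way, the derived equivalence ultimately depends only on the algebra-level isomorphism $C(A) \cong E(\theta_z)_0$ established above, which is the genuinely new input provided by the Clifford-deformation viewpoint.
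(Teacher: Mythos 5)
Your proposal follows essentially the same chain of equivalences as the paper's own proof: apply \cite[Proposition~5.2]{SvdB} to get $\underline{\mcm}\,A\cong D^b(\mmod A^![w^{-1}]_0)$, identify $A^![w^{-1}]_0$ with $E(\theta_z)_0$ via Proposition~\ref{prop2}, and then use Proposition~\ref{prop3} together with Dade's theorem (\cite[Theorem~3.1.1]{NvO}) to pass from $\mmod E(\theta_z)_0$ to $\gr_{\mathbb Z_2}E(\theta_z)$. Your explicit check that $\theta_z$ is nontrivial (because $z\neq 0$ forces $r_0\notin R$) is a detail the paper leaves implicit before invoking Proposition~\ref{prop3}, and your fallback remark about promoting an abelian-level equivalence via Buchweitz and a tilting object is unnecessary here since Smith--Van den Bergh already state their result at the triangulated level, but neither changes the fact that this is the paper's argument.
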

\begin{proof} (i) follows from Lemma \ref{lem8}.

(ii) By Lemma \ref{lem9}, $A$ is Artin-Schelter Gorenstein. Since $S$ is a Koszul algebra and $z\in S_2$ is a central regular element, $A$ is a Koszul algebra (cf. \cite[Lemma 5.1(1)]{SvdB}).

(iii) Retain the notation above the theorem. The element $w=\pi_{A^!}(r_0^*)$ is a central regular element of $A^!$. Let $A^![w^{-1}]$ be the localization of $A^!$ at $w$. Then $A^![w^{-1}]$ is a $\mathbb Z$-graded algebra. By \cite[Proposition 5.2]{SvdB}, there is an equivalence of triangulated categories $D^b(\mmod A^![w^{-1}]_0)\cong \underline{\mcm} A$.

Note that ${R'}^\bot=(\kk r_0)^\bot\cap R^\bot$ and $R^\bot={R'}^\bot+\kk r_0^*$. Then $\theta_z({R'}^\bot)=0$ and $\theta_z(r_0^*)=1$. By Proposition \ref{prop2},
\begin{equation}\label{eq6}
 A^![w^{-1}]_0\cong E(\theta_z)_0.
\end{equation}
Hence we have $$D^b(\mmod A^![w^{-1}]_0)\cong D^b(\mmod E(\theta_z)_0).$$ By Proposition \ref{prop3}, $E(\theta_z)$ is a strongly $\mathbb Z_2$-graded algebra. It follows that there is an equivalence of abelian categories $\gr_{\mathbb Z_2}E(\theta_z)\cong \mmod E(\theta_z)_0$ (cf. \cite[Theorem 3.1.1]{NvO}). Therefore, $\underline{\mcm} A\cong D^b(\gr_{\mathbb Z_2} E(\theta_z))$ as triangulated categories.
\end{proof}

\section{Noncommutative quadrics with isolated singularities}

Let $A$ be a noetherian connected graded algebra. Let $\tor A$ be the full subcategory of $\gr A$ consisting of finite dimensional right graded $A$-modules. The quotient category $$\qgr A=\gr A/\tor A$$ is the noncommutative analogue of projective schemes (cf. \cite{AZ,Ve}). For $M\in \gr A$, we write $\mathcal{M}$ for the corresponding object in $\qgr A$. Recall from \cite{Ue} that $A$ is called a {\it noncommutative isolated singularity} if $\qgr A$ has finite global dimension, that is, there is an integer $p$ such that for any objects $\mathcal{M},\mathcal{N}\in\qgr A$, $\Ext^i_{\qgr A}(\mathcal{M},\mathcal{N})=0$ for all $i>p$, or equivalently, the noncommutative projective scheme Proj$A$ is {\it smooth} (cf. \cite{SvdB}).

Let $S$ be a noetherian Koszul Artin-Schelter regular algebra. Assume that $z\in S_2$ is a central regular element of $S$. In this section, we investigate when $A=S/Sz$ is a noncommutative isolated singularity.

Let $per A$ be the triangulated subcategory of $D^b(\gr A)$ consisting of bounded complexes of finitely generated right graded projective $A$-modules. Then we have a quotient triangulated category $D^{gr}_{sg}(A)=D^b(\gr A)/per A$.

Since $A$ is Artin-Schelter Gorenstein, there is an equivalence of triangulated categories \cite[Theorem 4.4.1(2)]{Bu}:
\begin{equation}\label{eq5}
  D^{gr}_{sg}(A)\cong \underline{\mcm} A.
\end{equation}

The triangulated category $D^{gr}_{sg}(A)$ is related to $D^b(\qgr A)$ by Orlov's famous decomposition theorem.

\begin{theorem}\label{or} \cite[Theorem 2.5]{Or} Let $A$ be a noetherian Artin-Schelter Gorenstein algebra of Gorenstein parameter $l$. Then
\begin{itemize}
  \item [(i)] if $l>0$, there are fully faithful functors $\Phi_i: D^{gr}_{sg}(A)\longrightarrow D^b(\qgr A)$ and semiorthogonal decompositions $$D^b(\qgr A)=\langle \pi A(-i-l+1),\dots,\pi A(-i),\Phi_i(D^{gr}_{sg}(A))\rangle,$$ where $\pi:\gr A\to \qgr A$ is the projection functor;
  \item [(ii)] if $l<0$, there are fully faithful functors $\Psi_i:D^b(\qgr A)\to D^{gr}_{sg}(A)$ and semiorthogonal decompositions
  $$D^{gr}_{sg}(A)=\langle q\kk(-i),\dots,q\kk(-i+l+1),\Psi_i(D^b(\qgr A))\rangle,$$ where $q:D^b(\gr A)\to D^{gr}_{sg}(A)$ is the natural projection functor;
    \item [(iii)] if $l=0$, there is an equivalence $$D^{gr}_{sg}(A)\cong D^b(\qgr A).$$
\end{itemize}
\end{theorem}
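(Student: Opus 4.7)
The plan is to follow Orlov's original strategy, which analyzes the three categories $D^b(\gr A)$, $D^b(\qgr A)$, and $D^{gr}_{sg}(A)$ as two different Verdier quotients of the same bounded derived category and then uses the Artin-Schelter duality to compare them. Write $D^b_{\tor}(\gr A)$ for the subcategory with torsion cohomology; under the noetherian hypothesis there is a quotient sequence $D^b_{\tor}(\gr A)\to D^b(\gr A)\to D^b(\qgr A)$, while by definition $D^{gr}_{sg}(A)=D^b(\gr A)/\per A$. The Gorenstein parameter $l$ measures the misalignment between $\per A$ and $D^b_{\tor}(\gr A)$ inside $D^b(\gr A)$, and the three cases $l>0$, $l<0$, $l=0$ correspond exactly to whether $\per A$ strictly contains, is contained in, or equals the torsion subcategory after an appropriate shift of internal degree.

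The second step is to identify the exceptional objects. For $l>0$ I would compute $\uExt^*_{\qgr A}(\pi A(-j),\pi A(-k))$ using the short exact sequences coming from local cohomology and the AS Gorenstein formula $\uExt^d_A(\kk,A)\cong\kk(l)$. The vanishing obtained this way shows that $\pi A(-i-l+1),\dots,\pi A(-i)$ is an exceptional collection of length $l$. The dual calculation, for $l<0$, exhibits the trivial modules $q\kk(-j)$ for $j=i-l+1,\dots,i$ as an exceptional collection in $D^{gr}_{sg}(A)$, using Buchweitz's equivalence $D^{gr}_{sg}(A)\cong\underline{\mcm}A$ from (\ref{eq5}) to interpret morphism spaces.

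The third step is to construct the fully faithful functors. The natural candidate for $\Phi_i$ is the composition $D^{gr}_{sg}(A)\cong\underline{\mcm}A\hookrightarrow D^b(\gr A)\to D^b(\qgr A)$, suitably normalized by a degree shift depending on $i$; dually, $\Psi_i$ is built from a truncation of the minimal graded free resolution that reads off the ``singular tail'' of a complex in $D^b(\qgr A)$. Full faithfulness reduces to verifying that the restriction of $\pi$ to MCM modules lifted to the appropriate degree window kills no nonzero morphism, which is where one uses Serre duality on $\qgr A$ (a consequence of the AS Gorenstein dualizing complex $A(-l)[d-1]$ for $\qgr A$). The images of $\Phi_i$ (respectively $\Psi_i$) lie in the right orthogonal of the exceptional collection from Step 2 by a direct Ext computation.

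Finally one shows that the exceptional collection together with the image of $\Phi_i$ (resp.\ $\Psi_i$) generates the ambient triangulated category, completing the semiorthogonal decomposition. Given $\mathcal{F}\in D^b(\qgr A)$, lift to $F\in D^b(\gr A)$ and filter $F$ using the t-structure supported in a compact internal-degree window together with the subcategory generated by shifts of $A$; the residue after projecting to $\qgr A$ and modding out by the exceptional collection can be shown to come from an MCM module via a convergence argument using the AS Gorenstein property. The case $l=0$ is an immediate corollary: the exceptional collection is empty and the two categories coincide. The main obstacle is the generation step combined with establishing full faithfulness of $\Phi_i$ and $\Psi_i$; both require tight control of how the functor $R\Gamma_\mfm$ and the dualizing bimodule $A(-l)[d]$ interact with truncations, and this is where the Artin-Schelter Gorenstein hypothesis is used most essentially.
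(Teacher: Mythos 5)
The paper does not give a proof of this theorem: it is stated as a direct citation of Orlov, \cite[Theorem~2.5]{Or}, and is invoked without further argument. There is therefore no ``paper's own proof'' to compare against; your sketch is a reconstruction of Orlov's original strategy, and in outline it is the correct one (two Verdier quotients of $D^b(\gr A)$, exceptional collections from twists of $A$ resp.\ $\kk$, and a comparison governed by the Gorenstein parameter).

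Two points in the sketch would not survive being made precise, and they sit exactly at the technical heart of Orlov's proof. First, the proposed definition of $\Phi_i$ as the composite $D^{gr}_{sg}(A)\cong\underline{\mcm}A\hookrightarrow D^b(\gr A)\to D^b(\qgr A)$, ``suitably normalized,'' is not a well-defined triangle functor: a morphism in the stable category of MCM modules lifts to an honest module map only up to one factoring through a projective, and such maps do not vanish in $D^b(\gr A)$. Orlov avoids this by first producing, from a pair of semiorthogonal decompositions of $D^b(\gr A)$ (one cut out by the internal degree of cohomology, the other by the internal degree of generators of the projectives), an actual full subcategory of $D^b(\gr A)$ equivalent to $D^{gr}_{sg}(A)$, and then composing with the quotient to $D^b(\qgr A)$; the Gorenstein parameter $l$ measures the offset between those two decompositions. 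Second, the slogan that $\per A$ ``strictly contains / is contained in / equals'' $D^b_{\tor}(\gr A)$ up to a degree shift conflates the ambient subcategories, neither of which contains the other, with the windowed pieces that Orlov actually compares. The full-faithfulness and generation steps are not separate from these issues but consequences of getting the decompositions right, so calling them ``a convergence argument'' or ``Serre duality'' is gesturing at the conclusion rather than giving the proof.
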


We have the following special case of Orlov's theorem.
\begin{lemma}\label{lem10} Let $S$ be a noetherian Koszul Artin-Schelter regular algebra of global dimension $d\ge2$, and let $z\in S_2$ be a central regular element of $S$. Set $A=S/Sz$. Then there is a fully faithful triangle functor $\Phi:\underline{\mcm} A\longrightarrow D^b(\qgr A)$.
\end{lemma}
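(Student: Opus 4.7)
The plan is to deduce Lemma \ref{lem10} from Orlov's decomposition theorem (Theorem \ref{or}) combined with Buchweitz's equivalence (\ref{eq5}). The essential input is to compute the Gorenstein parameter of $A$ and check that it is non-negative.

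First I would observe that a Koszul Artin-Schelter regular algebra $S$ of global dimension $d$ has Gorenstein parameter equal to $d$: the minimal free resolution of $\kk_S$ is the Koszul complex, whose $d$th (and final) term is generated in degree $d$, so $\uExt_S^d(\kk,S)\cong \kk(d)$. Now apply Lemma \ref{lem9} to the central regular element $z\in S_2$: since $z$ has degree $k=2$, the quotient $A=S/Sz$ is Artin-Schelter Gorenstein of injective dimension $d-1$ with Gorenstein parameter $l=d-2$. By hypothesis $d\ge 2$, so $l\ge 0$.

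Next, I would split into two cases according to Theorem \ref{or}. If $d=2$, then $l=0$ and Theorem \ref{or}(iii) gives an outright equivalence $D^{gr}_{sg}(A)\cong D^b(\qgr A)$, which is of course fully faithful. If $d\ge 3$, then $l=d-2>0$ and Theorem \ref{or}(i) (taking for instance $i=0$) supplies a fully faithful triangle functor $\Phi_0\colon D^{gr}_{sg}(A)\longrightarrow D^b(\qgr A)$ sitting inside a semiorthogonal decomposition. In either case we obtain a fully faithful triangle functor from $D^{gr}_{sg}(A)$ to $D^b(\qgr A)$.

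Finally, since $A$ is Artin-Schelter Gorenstein by Theorem \ref{thm1}(ii) (or directly by Lemma \ref{lem9}), the equivalence (\ref{eq5}) yields $\underline{\mcm} A\cong D^{gr}_{sg}(A)$ as triangulated categories. Composing with the fully faithful functor produced above gives the desired $\Phi\colon \underline{\mcm} A\to D^b(\qgr A)$. There is no real obstacle here beyond correctly computing the Gorenstein parameter; the main point is simply that the hypothesis $d\ge 2$ is exactly what is needed so that $l\ge 0$, placing us in cases (i) or (iii) of Orlov's theorem rather than case (ii) (which would only produce a functor going in the wrong direction).
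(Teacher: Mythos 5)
Your proposal is correct and follows essentially the same route as the paper: compute the Gorenstein parameter of $A$ via Lemma \ref{lem9} to get $l=d-2\ge0$, then combine the Buchweitz equivalence (\ref{eq5}) with Orlov's Theorem \ref{or}(i) or (iii). You simply spell out the justification that the Gorenstein parameter of a Koszul Artin--Schelter regular algebra of global dimension $d$ equals $d$ (via the Koszul resolution) and make the case split $d=2$ versus $d\ge3$ explicit, both of which the paper leaves implicit.
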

\begin{proof} Since $S$ is a Koszul Artin-Schelter regular algebra of global dimension $d$, then the Gorenstein parameter of $S$ is equal to $d$. By Lemma \ref{lem9}, $A$ is Artin-Schelter Gorenstein of injective dimension $d-1$ with Gorenstein parameter $d-2\ge0$. The lemma follows from the equivalence (\ref{eq5}) and Theorem \ref{or}(i,iii).
\end{proof}

Now let $S=T(V)/(R)$ be a noetherian Koszul Artin-Schelter regular algebra of global dimension $d\ge2$, and  $E=T(V^*)/(R^\bot)$ its quadratic dual. Assume that $z\in S_2$ is a central regular element of $S$. Let $E(\theta_z)$ be the Clifford deformation of $E$ corresponding to the central element $z$ (cf. Theorem \ref{thm1}).

\begin{theorem}\label{thm2} Retain the notation as above. Then $A=S/Sz$ is a noncommutative isolated singularity if and only if $E(\theta_z)$ is semisimple as a $\mathbb Z_2$-graded algebra.
\end{theorem}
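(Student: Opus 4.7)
The plan is to prove both implications by exploiting the equivalence $\underline{\mcm} A \simeq D^b(\gr_{\mathbb{Z}_2} E(\theta_z))$ of Theorem \ref{thm1}(iii) together with the $\mathbb{Z}_2$-graded Frobenius structure on $E(\theta_z)$ coming from Proposition \ref{prop4}.

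The sufficiency is essentially \cite[Proposition 5.2(2)]{SvdB}. Since $z$ is a nonzero central regular element, $\theta_z$ is nontrivial, so by Proposition \ref{prop3} the algebra $E(\theta_z)$ is strongly $\mathbb{Z}_2$-graded and \cite[Theorem 3.1.1]{NvO} yields $\gr_{\mathbb{Z}_2} E(\theta_z) \simeq \mmod E(\theta_z)_0$; semisimplicity of $E(\theta_z)$ as a graded algebra descends to semisimplicity of the degree $0$ part. By Proposition \ref{prop2} and the identification (\ref{eq6}), this degree $0$ part is isomorphic to the Smith--Van den Bergh algebra $C(A) \cong A^![w^{-1}]_0$, whose semisimplicity gives smoothness of $\qgr A$ and hence the isolated singularity property.

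For the necessity, assume $\qgr A$ has finite global dimension $p$. Composing the equivalence of Theorem \ref{thm1}(iii) with the fully faithful embedding of Lemma \ref{lem10} produces a fully faithful triangle functor $\Psi \colon D^b(\gr_{\mathbb{Z}_2} E(\theta_z)) \to D^b(\qgr A)$. Since $\qgr A$ has global dimension $p$, a standard hyperext argument implies that for any $X, Y \in D^b(\qgr A)$ the morphism groups $\Hom_{D^b(\qgr A)}(X, Y[i])$ vanish for $i$ exceeding $p$ plus the cohomological amplitudes of $X$ and $Y$. Transporting this vanishing along $\Psi$, for each pair $M, N$ in the heart $\gr_{\mathbb{Z}_2} E(\theta_z)$ we obtain $\Ext^i_{\gr_{\mathbb{Z}_2} E(\theta_z)}(M, N) = 0$ for $i \gg 0$. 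Because $E(\theta_z)$ is finite-dimensional, there are only finitely many simple graded modules up to shift; specialising to pairs of simples produces a uniform bound, so $\gldim \gr_{\mathbb{Z}_2} E(\theta_z)$ is finite.

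To conclude, I invoke the Frobenius property: by Proposition \ref{prop4}, $E(\theta_z)$ is a $\mathbb{Z}_2$-graded Frobenius algebra, so graded projectives coincide with graded injectives. Any graded $M$ with $\pd M = n < \infty$ has $n$-th syzygy $\Omega^n M$ projective and hence injective, which splits the resolution so that $\Omega^{n-1} M$ is projective; by induction $M$ itself is projective. Thus finite graded global dimension forces every graded module to be projective, i.e.\ $E(\theta_z)$ is a semisimple $\mathbb{Z}_2$-graded algebra. The step I expect to require the most care is the hyperext vanishing for the image of $\Psi$: since $\Psi$ need not preserve the natural $t$-structures, one must bound the cohomological amplitudes of $\Psi M$ and $\Psi N$ in $D^b(\qgr A)$ to guarantee that $\Hom_{D^b(\qgr A)}(\Psi M, \Psi N[i]) = 0$ for all large $i$.
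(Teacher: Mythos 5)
Your proof is correct and follows essentially the same route as the paper: the sufficiency is the paper's reduction to $E(\theta_z)_0\cong A^![w^{-1}]_0$ via strong grading, and the necessity composes $\Phi\circ\Psi$ to deduce $\Ext$-vanishing for simples and then finite graded global dimension, which combined with the Frobenius property forces semisimplicity. The only cosmetic difference is that the paper packages the simples into the single module $T=E(\theta_z)/J$ before transporting, whereas you handle pairs of simples directly, and you make explicit the amplitude bookkeeping and the syzygy-splitting argument that the paper leaves implicit.
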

\begin{proof} Assume $E(\theta_z)$ is a $\mathbb Z_2$-graded semisimple algebra. By Proposition \ref{prop3}, $E(\theta_z)$ is a strongly $\mathbb Z_2$-graded algebra. Then $\gr_{\mathbb Z_2} E(\theta_z)\cong \mmod E(\theta_z)_0$ as abelian categories. Hence $E(\theta_z)_0$ is a semisimple algebra. By isomorphism (\ref{eq6}) as in the proof of Theorem \ref{thm1}, $E(\theta_z)_0\cong A^![w^{-1}]_0$. Hence $A^![w^{-1}]_0$ is semisimple. By \cite[Proposition 5.2(2)]{SvdB}, $A$ is a noncommutative isolated singularity.

Conversely, assume $A$ is a noncommutative isolated singularity. Then $\qgr A$ has finite global dimension. Given objects $X,Y\in D^b(\qgr A)$, there is an integer $p$ (depending on $X$ and $Y$) such that $\Hom_{D^b(\qgr A)}(X,Y[i])=0$ for $i>p$. Let $J$ be the $\mathbb Z_2$-graded Jacobson radical of $E(\theta_z)$. Write $T$ for the quotient algebra $E(\theta_z)/J$.
By Theorem \ref{thm1}, there is an equivalence of triangulated categories $\Psi:D^b(\gr_{\mathbb Z_2}E(\theta_z))\longrightarrow \underline{\mcm} A$. Let $\Phi:\underline{\mcm} A\longrightarrow D^b(\qgr A)$ be the fully faithful functor in Lemma \ref{lem10}. Then there is an integer $q$ such that for $i>q$, $$\Ext^i_{\gr_{\mathbb Z_2}E(\theta_z)}(T,T)\cong\Hom_{D^b(\gr_{\mathbb Z_2}E(\theta_z))}(T,T[i])\cong\Hom_{D^b(\qgr A)}(\Phi\Psi(T),\Phi\Psi(T)[i])=0.$$
Since $E(\theta_z)$ is finite dimensional, as a right $\mathbb Z_2$-graded $E(\theta_z)$-module $T$ is semisimple and each simple right $\mathbb Z_2$-graded $E(\theta_z)$-module is a direct summand of $T$. It follows that the right $\mathbb Z_2$-graded $E(\theta_z)$-module $T$ has finite projective dimension in $\gr_{\mathbb Z_2} E(\theta_z)$. Hence the $\mathbb Z_2$-graded algebra $E(\theta_z)$ has finite global dimension. On the other hand, $E(\theta_z)$ is a $\mathbb Z_2$-graded Frobenius algebra by Proposition \ref{prop4}. It follows that $E(\theta_z)$ is semisimple as a $\mathbb Z_2$-graded algebra.
\end{proof}

\begin{remark} (i) The sufficiency part of Theorem \ref{thm2} mainly follows from \cite[Proposition 5.2]{SvdB}. Observing that $E(\theta_z)$ is strongly $\mathbb Z_2$-graded, we obtain an abstract proof of \cite[Theorem 5.6]{SvdB}.

(ii) Independently, Mori-Ueyama also gave another proof of the above theorem in \cite[Theorem 5.4]{MU2}.
\end{remark}

\section{Clifford deformations of trivial extensions of Koszul Frobenius algebras}\label{sectr}

In this section, we work over the field of complex numbers $\mathbb{C}$. Let $\mathbb M_2(\mathbb C)$ be the matrix algebra of all the $2\times 2$-matrices over $\mathbb{C}$. We may view $\mathbb M_2(\mathbb{C})$ as a $\mathbb Z_2$-graded algebra by setting the degree 0 part consisting of elements of the form
$\left(
\begin{array}{cc}
     a&0\\
     0&b
\end{array}
\right)
$, and degree 1 part consisting of elements of the form
$\left(
\begin{array}{cc}
     0&a\\
     b&0
\end{array}
\right)$.

Let $A$ and $B$ be $\mathbb Z_2$-graded algebras. The twisting tensor algebra $A\hat\otimes B$ is a $\mathbb Z_2$-graded algebra defined as follows: $(A\hat\otimes B)_0=A_0\otimes B_0\oplus A_1\otimes B_1$ and $(A\hat\otimes B)_1=A_0\otimes B_1\oplus A_1\otimes B_0$ as $\mathbb Z_2$-graded space; the multiplication is defined as $(a_1\hat\otimes b_1)(a_2\hat\otimes b_2)=(-1)^{|b_1||a_2|}(a_1a_2\hat \otimes b_1b_2)$, where $b_1$ and $a_2$ are homogeneous elements with degrees $|b_1|$ and $|a_2|$ respectively.

Let $\mathbb G=\{1,\alpha\}$ be a group of order 2, and let $\mathbb C \mathbb G$ be the group algebra. Then $\mathbb C \mathbb G$ is naturally a $\mathbb Z_2$-graded algebra by setting $|\alpha|=1$ and $|1|=0$. Define a linear map
\begin{equation}\label{TE1}
\Upsilon:\mathbb C \mathbb G\hat\otimes\mathbb C \mathbb G\longrightarrow\mathbb M_2(\mathbb C),
\end{equation}by setting
{\small\begin{equation*}
1\hat\otimes1\mapsto \left(
\begin{array}{cc}
     1&0\\
     0&1
\end{array}
\right),\
1\hat\otimes \alpha\mapsto \left(
\begin{array}{cc}
     0&1\\
     1&0
\end{array}
\right),\
\alpha\hat\otimes 1\mapsto\left(
\begin{array}{cc}
     0&\sqrt{-1}\\
     -\sqrt{-1}&0
\end{array}
\right),\ \alpha\hat\otimes\alpha\mapsto\left(
\begin{array}{cc}
     \sqrt{-1}&0\\
     0&-\sqrt{-1}
\end{array}
\right).
\end{equation*}}
The following lemma is well known and is easy to check.

\begin{lemma} The map $\Upsilon$ is an isomorphism of $\mathbb Z_2$-graded algebras.
\end{lemma}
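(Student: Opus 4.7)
The plan is to prove this by direct verification on basis elements, exploiting the fact that both sides are $4$-dimensional over $\mathbb{C}$. Concretely, $\mathbb{C}\mathbb{G}\hat\otimes \mathbb{C}\mathbb{G}$ has the basis $\{1\hat\otimes 1,\ 1\hat\otimes\alpha,\ \alpha\hat\otimes 1,\ \alpha\hat\otimes\alpha\}$, while $\mathbb{M}_2(\mathbb{C})$ is spanned by the four matrices on the right-hand side of the definition of $\Upsilon$. A quick inspection shows those four matrices (the identity, two real Pauli-type matrices, and a diagonal matrix with entries $\pm\sqrt{-1}$) are linearly independent, so $\Upsilon$ is already a $\mathbb{C}$-linear isomorphism of underlying vector spaces.

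Next I would verify that $\Upsilon$ respects the $\mathbb{Z}_2$-grading. By definition, $(\mathbb{C}\mathbb{G}\hat\otimes\mathbb{C}\mathbb{G})_0$ is spanned by $1\hat\otimes 1$ and $\alpha\hat\otimes\alpha$, while $(\mathbb{C}\mathbb{G}\hat\otimes\mathbb{C}\mathbb{G})_1$ is spanned by $1\hat\otimes\alpha$ and $\alpha\hat\otimes 1$; under $\Upsilon$ these go to diagonal and off-diagonal matrices respectively, matching the stated $\mathbb{Z}_2$-grading on $\mathbb{M}_2(\mathbb{C})$.

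The substantive step is checking multiplicativity. Rather than grinding through all sixteen pairs of basis elements, I would observe that it suffices to verify the four generating relations $x^2=I$, $y^2=I$, $(xy)^2=-I$, and the implied anti-commutation $xy=-yx$, where $x=\Upsilon(1\hat\otimes\alpha)$ and $y=\Upsilon(\alpha\hat\otimes 1)$. On the $\hat\otimes$-side one computes, using $(a_1\hat\otimes b_1)(a_2\hat\otimes b_2)=(-1)^{|b_1||a_2|}(a_1a_2)\hat\otimes(b_1b_2)$ and $\alpha^2=1$, that $(1\hat\otimes\alpha)^2=1\hat\otimes 1$, $(\alpha\hat\otimes 1)^2=1\hat\otimes 1$, $(1\hat\otimes\alpha)(\alpha\hat\otimes 1)=-(\alpha\hat\otimes\alpha)$, and $(\alpha\hat\otimes 1)(1\hat\otimes\alpha)=\alpha\hat\otimes\alpha$; so $(\alpha\hat\otimes\alpha)^2=-(1\hat\otimes 1)$. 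On the matrix side, direct multiplication of the Pauli-type matrices gives exactly the same relations (the crucial sign $-I$ coming from $\sqrt{-1}\cdot(-\sqrt{-1})=1$ versus $(\sqrt{-1})^2=-1$). Since the even and odd parts are each $2$-dimensional and these relations determine all products, multiplicativity follows on a generating set, hence everywhere.

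The only real obstacle to watch out for is bookkeeping of the Koszul sign in the twisted product — in particular getting the sign in $(1\hat\otimes\alpha)(\alpha\hat\otimes 1)$ consistent with the matrix product $\bigl(\begin{smallmatrix}0&1\\1&0\end{smallmatrix}\bigr)\bigl(\begin{smallmatrix}0&\sqrt{-1}\\-\sqrt{-1}&0\end{smallmatrix}\bigr)=\bigl(\begin{smallmatrix}-\sqrt{-1}&0\\0&\sqrt{-1}\end{smallmatrix}\bigr)=-\Upsilon(\alpha\hat\otimes\alpha)$. Once this is in place, the verification is immediate and $\Upsilon$ is an isomorphism of $\mathbb{Z}_2$-graded algebras.
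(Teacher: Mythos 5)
Your verification is correct, and it is exactly the kind of direct check the paper has in mind: the paper itself gives no proof, stating only that the lemma ``is well known and is easy to check.'' Your linear-algebra step (the four matrices span $\mathbb M_2(\mathbb C)$), your grading check, and your sign computations in the twisted product are all accurate; in particular $(1\hat\otimes\alpha)(\alpha\hat\otimes 1)=-(\alpha\hat\otimes\alpha)$ matches the matrix product $xy=-\Upsilon(\alpha\hat\otimes\alpha)$, and $(\alpha\hat\otimes\alpha)^2=-(1\hat\otimes1)$ matches $(\pm\sqrt{-1})^2=-1$ on the diagonal.

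One small point worth making explicit, since $\Upsilon$ is \emph{defined} on a basis rather than on generators and relations: the step ``multiplicativity on a generating set implies multiplicativity everywhere'' deserves a sentence of justification. The clean way to say it is that $s=1\hat\otimes\alpha$ and $t=\alpha\hat\otimes1$ satisfy $s^2=t^2=1\hat\otimes1$ and $st=-ts$, which already force $\mathbb C\mathbb G\hat\otimes\mathbb C\mathbb G$ to be a quotient of $\mathbb C\langle s,t\rangle/(s^2-1,\,t^2-1,\,st+ts)$; by dimension count (both are $4$-dimensional) it is that Clifford algebra exactly, and you have checked the images $x,y$ satisfy the same relations. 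Hence there is a well-defined algebra map $s\mapsto x$, $t\mapsto y$, and it agrees with $\Upsilon$ on the basis $\{1\hat\otimes1,\,s,\,t,\,ts\}$, so $\Upsilon$ is that algebra map. With that sentence added, the argument is airtight.
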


Note that there is an equivalence of abelian categories $$\gr_{\mathbb Z_2} \mathbb M_2(\mathbb C)\cong \gr_{\mathbb Z_2}\mathbb C,$$ where $\mathbb C$ is viewed as a $\mathbb Z_2$-graded algebra concentrated in degree 0. By the above lemma, we have an equivalence of abelian categories (see also \cite[Lemma 4.11]{Z}) $$\gr_{\mathbb Z_2} (\mathbb C\mathbb G\hat\otimes\mathbb C\mathbb G)\cong \gr_{\mathbb Z_2}\mathbb C.$$ We have the following result, which should be well known for experts.

\begin{lemma}\label{lemTE1} Let $A$ be a $\mathbb Z_2$-graded algebra. Then there is an equivalence of abelian categories
$$\gr_{\mathbb Z_2}(A\hat\otimes\mathbb C\mathbb G\hat\otimes\mathbb C\mathbb G)\cong \gr_{\mathbb Z_2} A.$$
\end{lemma}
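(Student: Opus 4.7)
My plan is to reduce the statement to a graded Morita equivalence between $A \mathbin{\hat\otimes} \mathbb M_2(\mathbb C)$ and $A$, using the isomorphism $\Upsilon$ already constructed.

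\emph{Step 1: rewrite the triple product.} The twisted tensor product $\hat\otimes$ is associative and functorial in each slot (with respect to $\mathbb Z_2$-graded algebra maps), because its multiplication is the Koszul-signed multiplication of super algebras. In particular, applying $A\,\hat\otimes\,(-)$ to the isomorphism $\Upsilon\colon \mathbb C\mathbb G\hat\otimes\mathbb C\mathbb G \xrightarrow{\sim} \mathbb M_2(\mathbb C)$ of the preceding lemma gives an isomorphism of $\mathbb Z_2$-graded algebras
\[
A\,\hat\otimes\,\mathbb C\mathbb G\,\hat\otimes\,\mathbb C\mathbb G \;\cong\; A\,\hat\otimes\,\mathbb M_2(\mathbb C).
\]
So it suffices to prove $\gr_{\mathbb Z_2}(A\,\hat\otimes\,\mathbb M_2(\mathbb C))\cong \gr_{\mathbb Z_2} A$.

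\emph{Step 2: set up a graded idempotent.} Let $e_{ij}\in \mathbb M_2(\mathbb C)$ be the matrix units. By the $\mathbb Z_2$-grading on $\mathbb M_2(\mathbb C)$ given in the paper, $e_{11},e_{22}$ lie in degree $0$. Put $p=1\,\hat\otimes\,e_{11}\in A\,\hat\otimes\,\mathbb M_2(\mathbb C)$; then $p$ is a homogeneous idempotent of degree $0$. Because $|e_{11}|=0$, all the Koszul signs in $p(a\,\hat\otimes\,m)p$ are trivial, so
\[
p(a\,\hat\otimes\,m)p \;=\; a\,\hat\otimes\,e_{11}me_{11},
\]
which yields a $\mathbb Z_2$-graded algebra isomorphism $p(A\,\hat\otimes\,\mathbb M_2(\mathbb C))p \cong A\,\hat\otimes\,(e_{11}\mathbb M_2(\mathbb C)e_{11}) = A\,\hat\otimes\,\mathbb C = A$.

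\emph{Step 3: $p$ is a graded progenerator.} Set $B=A\,\hat\otimes\,\mathbb M_2(\mathbb C)$. Since $\mathbb M_2(\mathbb C)e_{11}\mathbb M_2(\mathbb C)=\mathbb M_2(\mathbb C)$, we have $BpB=B$, i.e., the two-sided ideal generated by $p$ is all of $B$. Equivalently, $1\,\hat\otimes\,e_{22}=(1\,\hat\otimes\,e_{21})\,p\,(1\,\hat\otimes\,e_{12})$ up to signs, so $pB\oplus (1-p)B=B$ with both summands in the additive closure of graded shifts of $pB$. Hence $pB$ is a finitely generated graded-projective generator of $\gr_{\mathbb Z_2} B$, with graded endomorphism ring $pBp\cong A$.

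\emph{Step 4: apply graded Morita.} Graded Morita theory for $G$-graded rings (see, e.g., \cite{NvO}) now yields an equivalence of abelian categories
\[
\gr_{\mathbb Z_2} B \;\xrightarrow{\;\sim\;}\; \gr_{\mathbb Z_2}(pBp), \qquad M\longmapsto Mp,
\]
with quasi-inverse $N\mapsto N\otimes_{pBp}pB$. Combining Steps 1--3, this gives $\gr_{\mathbb Z_2}(A\,\hat\otimes\,\mathbb C\mathbb G\,\hat\otimes\,\mathbb C\mathbb G)\cong \gr_{\mathbb Z_2} A$, as desired.

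The only delicate point I anticipate is Step 1--the verification that the Koszul signs in the twisted tensor product render $\hat\otimes$ strictly associative, so that $\Upsilon$ can be tensored from the left with $A$ without introducing additional sign corrections. Once that is in place, everything else is a routine Morita argument with a degree-zero idempotent.
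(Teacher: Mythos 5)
Your proof is correct. The paper does not give an argument at all --- it simply cites \cite[Lemma~3.10]{Z} (Zhao, \emph{Graded Morita equivalence of Clifford superalgebras}) --- so your Steps~1--4 are in effect supplying the content behind that citation: reduce via $\Upsilon$ to $A\hat\otimes\mathbb M_2(\mathbb C)$, then invoke the graded corner-ring Morita equivalence induced by the degree-zero idempotent $p = 1\hat\otimes e_{11}$. The argument is sound. Two small points of polish. In Step~3 the clause about ``the additive closure of graded shifts of $pB$'' is not needed: once $BpB = B$ you already know $pB$ is a finitely generated projective generator, which is all that Morita requires (the shift observation, that $1\hat\otimes e_{12}$ gives $(1-p)B\cong pB(1)$, is true but extraneous). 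In Step~4 the crucial reason the ungraded equivalence restricts to $\gr_{\mathbb Z_2}$ is exactly that $p$ is homogeneous of degree $0$, so $pB$ and $Bp$ are $\mathbb Z_2$-graded bimodules and both $(-)p$ and $(-)\otimes_{pBp}pB$ preserve gradings; it is worth saying this explicitly rather than leaving it to the reader. Finally, the associativity worry you flag at the end is unfounded: the Koszul-signed $\hat\otimes$ of $\mathbb Z_2$-graded algebras is strictly associative and $A\hat\otimes(-)$ is a functor on $\mathbb Z_2$-graded algebra maps, so $A\hat\otimes\Upsilon$ is an isomorphism with no extra sign corrections.
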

\begin{proof} This is a direct consequence of \cite[Lemma 3.10]{Z}. Note that the algebra is assumed to be finite dimensional in \cite[Lemma 3.10]{Z}, but the result hold for arbitrary $\mathbb Z_2$-graded algebras.
\end{proof}
\begin{lemma}\label{lemt3} For $P\in \gr_{\mathbb Z_2} A\hat\otimes\mathbb C\mathbb G$, $P$ is projective if and only if $P$ is projective as a $\mathbb Z_2$-graded $A$-module.
\end{lemma}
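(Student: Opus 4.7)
The plan splits cleanly into the two implications.

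For $(\Rightarrow)$ the first step is to identify the underlying $\mathbb Z_2$-graded right $A$-module of $A\hat\otimes\mathbb C\mathbb G$. Setting $\beta=1\hat\otimes\alpha$, direct computation with the twisted product of Section \ref{sectr} gives $\beta a=(-1)^{|a|}a\beta$ and $\beta^2=1$. The map $A(1)\to A\hat\otimes\mathbb C\alpha$, $a\mapsto(-1)^{|a|}(a\hat\otimes\alpha)$, is then a degree preserving right $A$-module isomorphism, so $A\hat\otimes\mathbb C\mathbb G\cong A\oplus A(1)$ in $\gr_{\mathbb Z_2}A$. Consequently every free $\mathbb Z_2$-graded $A\hat\otimes\mathbb C\mathbb G$-module is free in $\gr_{\mathbb Z_2}A$, and passing to summands shows that restriction of scalars sends projectives to projectives.

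For $(\Leftarrow)$ I would use an averaging trick, available because $|\mathbb G|=2$ is invertible in $\mathbb C$. Suppose $P$ is $\mathbb Z_2$-graded projective as an $A$-module, and let $\pi\colon M\twoheadrightarrow N$ be an epimorphism and $g\colon P\to N$ a morphism in $\gr_{\mathbb Z_2}(A\hat\otimes\mathbb C\mathbb G)$. Restriction along $A\hookrightarrow A\hat\otimes\mathbb C\mathbb G$ keeps $\pi$ an epimorphism, so $A$-projectivity of $P$ provides a degree $0$ right $A$-linear lift $f\colon P\to M$. Define
\[
  \tilde g(p)\;=\;\tfrac12\bigl(f(p)+f(p\beta)\beta\bigr).
\]
The checks to perform are: $\tilde g$ is degree $0$ (each factor $\beta$ shifts degree by one and these shifts cancel); $\tilde g$ is right $A$-linear, using the identity $p\cdot a\cdot\beta=(-1)^{|a|}(p\beta)\cdot a$ that comes from $\beta a=(-1)^{|a|}a\beta$; $\tilde g(p\beta)=\tilde g(p)\beta$, using $\beta^2=1$; and $\pi\tilde g=g$, since the $A\hat\otimes\mathbb C\mathbb G$-linearity of $g$ turns $g(p\beta)\beta$ into $g(p)\beta^2=g(p)$. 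Because $A$ and $\beta$ generate $A\hat\otimes\mathbb C\mathbb G$, the first three checks together upgrade $\tilde g$ to a morphism in $\gr_{\mathbb Z_2}(A\hat\otimes\mathbb C\mathbb G)$ lifting $g$, so $P$ is projective in $\gr_{\mathbb Z_2}(A\hat\otimes\mathbb C\mathbb G)$.

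The only real obstacle is the sign bookkeeping when verifying $A$-linearity of $\tilde g$: the supercommutation relation $\beta a=(-1)^{|a|}a\beta$ has to be applied symmetrically so that the $f(p)a$ and $f(p\beta)a\beta$ summands pick up matching signs. An alternative packaging of the same argument would exhibit $e=\tfrac12(1\otimes_A 1+\beta\otimes_A\beta)$ as a separability idempotent for the extension $A\hookrightarrow A\hat\otimes\mathbb C\mathbb G$ and deduce the statement from the general theory of separable extensions, but the direct averaging is more transparent in this $\mathbb Z_2$-graded setting.
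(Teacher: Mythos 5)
Your proposal matches the paper's argument essentially line for line: for $(\Rightarrow)$ you observe that $A\hat\otimes\mathbb C\mathbb G$ restricts to a graded free (hence projective) $A$-module so restriction of scalars preserves projectives, and for $(\Leftarrow)$ you use exactly the averaging formula the paper uses, $\tilde g(p)=\tfrac12\bigl(f(p)+f(p\beta)\beta\bigr)$, with $\beta=1\hat\otimes\alpha$ playing the role of the paper's $\sigma$. The only difference is cosmetic — you spell out the $\mathbb Z_2$-graded isomorphism $A\hat\otimes\mathbb C\mathbb G\cong A\oplus A(1)$ and the sign bookkeeping that the paper labels ``straightforward to check,'' and both are done correctly.
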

\begin{proof} Note that $A\hat\otimes\mathbb C\mathbb G$ is a $\mathbb Z_2$-graded projective $A$-module. Hence each $\mathbb Z_2$-graded projective $A\hat\otimes\mathbb C\mathbb G$-module is projective as a $\mathbb Z_2$-graded $A$-module.

On the contrary, suppose that $P$ is projective as a $\mathbb Z_2$-graded $A$-module. Let $f\colon M\to N$ be an epimorphism of $\mathbb Z_2$-graded $A\hat\otimes\mathbb C\mathbb G$-modules, and let $g\colon P\to N$ be a $\mathbb Z_2$-graded $A\hat\otimes\mathbb C\mathbb G$-module morphism. Since $P$ is projective as a graded $A$-module, there is a graded $A$-module morphism $h\colon P\to M$ such that $fh=g$. Define a morphism $h'\colon P\to M$ by $h'(p)=\frac{1}{2}(h(p)+h(p\cdot \alpha)\cdot \alpha)$, for all $p\in P$. It is straightforward to check that $h'$ is a graded $A\hat\otimes\mathbb C\mathbb G$-module morphism, and that $fh'=g$. Hence $P$ is projective in $\gr_{\mathbb Z_2} A\hat\otimes\mathbb C\mathbb G$.
\end{proof}

Let $\gldim_{\mathbb Z_2} A$ be the $\mathbb Z_2$-graded global dimension of $A$.

\begin{corollary}\label{cort1} $\gldim_{\mathbb Z_2} A=\gldim_{\mathbb Z_2} A\hat\otimes\mathbb C\mathbb G$.
\end{corollary}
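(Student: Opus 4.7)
The plan is to leverage Lemma~\ref{lemt3} to identify projective dimensions over $B:=A\hat\otimes\mathbb C\mathbb G$ with those over $A$. Concretely, I will prove the stronger statement that
\[
\pd_B M \;=\; \pd_A(U M) \quad \text{for every } M\in\gr_{\mathbb Z_2} B,
\]
where $U\colon \gr_{\mathbb Z_2} B\to \gr_{\mathbb Z_2} A$ is the restriction functor along the inclusion $a\mapsto a\hat\otimes 1$. Granted this, the corollary follows by taking suprema: the inequality $\gldim_{\mathbb Z_2} B\leq\gldim_{\mathbb Z_2} A$ is immediate, while for the reverse direction, given any $N\in\gr_{\mathbb Z_2} A$, the induced module $F(N):=N\otimes_A B$ satisfies $UF(N)\cong N\oplus N'$ as $\mathbb Z_2$-graded $A$-modules (where $N'$ is a parity-twisted copy of $N$), since $B$ is free of rank $2$ over $A$; thus $\pd_A N=\pd_A UF(N)=\pd_B F(N)\leq \gldim_{\mathbb Z_2} B$.

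To establish the identity, the inequality $\pd_A(UM)\leq \pd_B M$ is immediate from Lemma~\ref{lemt3}: any projective resolution of $M$ in $\gr_{\mathbb Z_2} B$ becomes a projective resolution of $UM$ in $\gr_{\mathbb Z_2} A$ after applying $U$, since each term is projective in $\gr_{\mathbb Z_2} A$. For the reverse inequality, I will use the adjoint pair $F\dashv U$ where $F=-\otimes_A B$. Because $B$ is free (hence flat) over $A$, the functor $F$ is exact; being left adjoint to the exact functor $U$, it also sends projectives to projectives. Applying $F$ to an $A$-projective resolution of $UM$ of length $\pd_A(UM)$ yields a $B$-projective resolution of $F(UM)$ of the same length, so $\pd_B F(UM)\leq \pd_A(UM)$. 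If $M$ is a direct summand of $F(UM)$ in $\gr_{\mathbb Z_2} B$, then $\pd_B M\leq \pd_B F(UM)\leq \pd_A(UM)$, and we are done.

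The main obstacle is constructing a $B$-linear section of the counit $\varepsilon_M\colon F(UM)=UM\otimes_A B\to M$, $m\otimes b\mapsto m\cdot b$. I will define
\[
s\colon M \longrightarrow UM\otimes_A B, \qquad s(m) \;=\; \tfrac{1}{2}\bigl(m\otimes (1\hat\otimes 1) \;+\; (-1)^{|m|}\, m\cdot(1\hat\otimes \sigma)\otimes(1\hat\otimes \sigma)\bigr),
\]
extended linearly to inhomogeneous $m$, where $\sigma$ is the nontrivial element of $\mathbb G$. A direct check on the two types of generators $a\hat\otimes 1$ (with $a\in A$ homogeneous) and $1\hat\otimes\sigma$ of $B$, using the sign rule $(a_1\hat\otimes b_1)(a_2\hat\otimes b_2)=(-1)^{|b_1||a_2|}a_1 a_2\hat\otimes b_1 b_2$ together with $\sigma^2=1$, shows that $s$ is $B$-linear and that $\varepsilon_M\circ s=\id_M$. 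The crux is that $|\mathbb G|=2$ is invertible in $\mathbb C$, which is exactly the ingredient that makes the averaging idempotent of Lemma~\ref{lemt3} available again, now promoted to a splitting at the level of modules rather than just of morphism lifts. Once this section is in hand, the corollary follows from the chain of inequalities above.
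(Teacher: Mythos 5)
Your overall strategy is genuinely different from, and in some ways sharper than, the paper's. The paper proves only the inequality $\gldim_{\mathbb Z_2} A\hat\otimes\mathbb C\mathbb G\leq\gldim_{\mathbb Z_2} A$ (via Lemma~\ref{lemt3}), then iterates once more and invokes Lemma~\ref{lemTE1} to pin down the other side: $\gldim_{\mathbb Z_2} A=\gldim_{\mathbb Z_2}A\hat\otimes\mathbb C\mathbb G\hat\otimes\mathbb C\mathbb G\leq\gldim_{\mathbb Z_2}A\hat\otimes\mathbb C\mathbb G\leq\gldim_{\mathbb Z_2}A$. You instead establish the pointwise identity $\pd_B M=\pd_A(UM)$ by producing a $B$-linear splitting of the counit and using the exactness/projective-preservation of both adjoints. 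That is more work but gives strictly more information; the paper's route is slicker but leans on the Morita-type equivalence of Lemma~\ref{lemTE1}. Both are legitimate, and the directions you handle separately (restriction gives $\leq$, induction and the summand of $F(UM)$ gives $\geq$) line up with the two inequalities the paper assembles by iteration.

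However, the explicit formula you give for the section is incorrect. With
\[
s(m)=\tfrac12\bigl(m\otimes(1\hat\otimes 1)+(-1)^{|m|}\,m\cdot(1\hat\otimes\sigma)\otimes(1\hat\otimes\sigma)\bigr),
\]
one computes $\varepsilon_M(s(m))=\tfrac12\bigl(m+(-1)^{|m|}m\bigr)$, which is the projection onto the even part and not $\id_M$; the same sign also destroys $B$-linearity, e.g.\ $s(m\cdot(1\hat\otimes\sigma))\neq s(m)\cdot(1\hat\otimes\sigma)$ because the extra $(-1)^{|m|}$ flips parity. The spurious sign should simply be removed: the correct section is
\[
s(m)=\tfrac12\bigl(m\otimes(1\hat\otimes 1)+m\cdot(1\hat\otimes\sigma)\otimes(1\hat\otimes\sigma)\bigr),
\]
and then $\varepsilon_M\circ s=\id_M$ is immediate from $(1\hat\otimes\sigma)^2=1\hat\otimes 1$, while $B$-linearity on the generators $1\hat\otimes\sigma$ and $a\hat\otimes 1$ follows from the commutation rule $(1\hat\otimes\sigma)(a\hat\otimes 1)=(-1)^{|a|}(a\hat\otimes\sigma)$ and the $A$-balancing in $UM\otimes_A B$ --- the two signs that appear cancel, so no $(-1)^{|m|}$ is wanted. (This is precisely the separability idempotent $\tfrac12\sum_{g\in\mathbb G}g\otimes g^{-1}$ of $\mathbb C\mathbb G$ promoted to the module level, in harmony with the averaging used in Lemma~\ref{lemt3}.) With this correction the rest of your argument, including the identification $UF(N)\cong N\oplus N'$ with $N'$ a parity/degree twist of $N$ (which leaves $\pd_A$ unchanged), goes through and yields the corollary.
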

\begin{proof} Let $M$ be a right $\mathbb Z_2$-graded $A\hat\otimes\mathbb C\mathbb G$-module. Assume that $\gldim_{\mathbb Z_2} A=d<\infty$. Suppose that $$\cdots\to P_n\overset{\delta_n}\to\cdots\overset{\delta_1}\to P_0\overset{\delta_0}\to M\to 0$$ is a projective resolution of $M$ in $\gr_{\mathbb Z_2} A\hat\otimes\mathbb C\mathbb G$. Since $\gldim_{\mathbb Z_2} A=d$, $Q=\ker\delta_{d-1}$ is projective as a graded $A$-module, and hence $Q$ is projective graded $A\hat\otimes\mathbb C\mathbb G$-module by Lemma \ref{lemt3}. Therefore, $\gldim_{\mathbb Z_2} A\hat\otimes\mathbb C\mathbb G\leq \gldim_{\mathbb Z_2}A$. Similarly, $\gldim_{\mathbb Z_2} A\hat\otimes\mathbb C\mathbb G\hat\otimes\mathbb C\mathbb G\leq\gldim_{\mathbb Z_2} A\hat\otimes\mathbb C\mathbb G$. By Lemma \ref{lemTE1}, $\gldim_{\mathbb Z_2} A=\gldim_{\mathbb Z_2} A\hat\otimes\mathbb C\mathbb G\hat\otimes\mathbb C\mathbb G$.

If $\gldim_{\mathbb Z_2} A=\infty$, we claim that $\gldim_{\mathbb Z_2} A\hat\otimes\mathbb C\mathbb G=\infty$. Otherwise, if $\gldim_{\mathbb Z_2} A\hat\otimes\mathbb C\mathbb G<\infty$, then by the above proof, it follows that $\gldim_{\mathbb Z_2} A\hat\otimes\mathbb C\mathbb G\hat\otimes\mathbb C\mathbb G\leq\gldim_{\mathbb Z_2} A\hat\otimes\mathbb C\mathbb G$. Then by Lemma \ref{lemTE1}, $\gldim_{\mathbb Z_2} A<\infty$, a contradiction. Hence the result follows.
\end{proof}

Now let $E=T(V)/(R)$ be a Koszul Frobenius algebra, and let $M$ be a graded $E$-bimodule. Recall that the trivial extension of $E$ by $M$ is the $\mathbb Z$-graded algebra $\Gamma(E,M)=E\oplus M$ with a multiplication defined by $(x_1,m)(x_2,n)=(x_1x_2,x_1n+mx_2)$ for $x_1,x_2\in E$ and $m,n\in M$.

Let $\epsilon$ be the automorphism of $E$ defined by $\epsilon(x)=(-1)^{|x|}x$ for homogeneous element $x\in E$. Let ${}_\epsilon E$ be the graded $E$-bimodule with left $E$-action twisted by $\epsilon$.
Denote
\begin{equation}\label{eqt}
  \widetilde{E}=\Gamma(E,{}_\epsilon E(-1)).
\end{equation}

\begin{lemma}\label{lemt1} Let $E$ be a Koszul Frobenius algebra. Then $\widetilde{E}$ is a Koszul Frobenius algebra.
\end{lemma}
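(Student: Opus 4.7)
The plan is to realize $\widetilde{E}$ as a quadratic algebra, construct a Frobenius form directly from that of $E$, and establish Koszulity by computing the quadratic dual. Setting $\widetilde{V}=V\oplus\kk e$ with $e$ the element $1_E\in E_0$ viewed inside ${}_\epsilon E(-1)_1$, the trivial-extension multiplication $(x_1,m_1)(x_2,m_2)=(x_1x_2,\epsilon(x_1)m_2+m_1x_2)$ forces the following degree-$2$ relations: every $r\in R\subset V\otimes V$, the anticommutation $ve+ev$ for each $v\in V$ (since $v\cdot e=\epsilon(v)\cdot 1=-v$ while $e\cdot v=v$), and $e^2=0$. Let $\widetilde{R}\subset\widetilde{V}^{\otimes 2}$ be their span and put $B:=T(\widetilde{V})/(\widetilde{R})$; there is a natural surjection $\pi\colon B\twoheadrightarrow\widetilde{E}$.

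For the Frobenius property, let $n$ be the top nonzero degree of $E$ and fix a nondegenerate $\xi\colon E_n\to\kk$. Since $\widetilde{E}_{n+1}={}_\epsilon E(-1)_{n+1}\cong E_n$, composing the projection $\widetilde{E}\twoheadrightarrow\widetilde{E}_{n+1}$ with $\xi$ gives a top-degree functional $\tilde\xi\colon\widetilde{E}\to\kk$, and I set $\langle a,b\rangle=\tilde\xi(ab)$. A case analysis on a nonzero homogeneous $(x,m)\in\widetilde{E}$ of degree $k$ proves nondegeneracy: if $m\neq 0$ (so $m\in E_{k-1}$), by the Frobenius property of $E$ there exists $y\in E_{n+1-k}$ with $\xi(my)\neq 0$, whence $\langle(x,m),(y,0)\rangle=\xi(my)\neq 0$; if $m=0$ and $x\neq 0$, pick $y\in E_{n-k}$ with $\xi(xy)\neq 0$, and then $\langle(x,0),(0,y)\rangle=\xi(\epsilon(x)y)=(-1)^k\xi(xy)\neq 0$.

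For Koszulity, I would compute $\widetilde{R}^\bot\subset\widetilde{V}^{*\otimes 2}$ block by block. In $V^*\otimes V^*$ it is $R^\bot$; in $(V^*\otimes\kk e^*)\oplus(\kk e^*\otimes V^*)$, the condition $(\alpha\otimes e^*+e^*\otimes\beta)(ve+ev)=\alpha(v)+\beta(v)=0$ forces $\beta=-\alpha$, yielding the commutation relations $v^*e^*-e^*v^*$; and the pairing on $\kk e^*\otimes\kk e^*$ with $e\otimes e$ is nondegenerate, so no relation is imposed on $(e^*)^2$. Consequently $B^!\cong E^!\otimes\kk[e^*]$, the polynomial extension of $E^!$ by a central degree-$1$ generator. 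Since tensor products of Koszul algebras are Koszul, $B^!$ is Koszul, hence so is $B$.

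To identify $B$ with $\widetilde{E}$, I would compare Hilbert series: the Koszul identity $H_B(t)H_{B^!}(-t)=1$ combined with $H_{B^!}(t)=H_{E^!}(t)/(1-t)$ yields $H_B(t)=(1+t)H_E(t)=H_{\widetilde{E}}(t)$, the last equality coming from the vector space decomposition $\widetilde{E}=E\oplus E(-1)$. The surjection $\pi$ is then an isomorphism, and $\widetilde{E}$ inherits the Koszul Frobenius property from $B$. The delicate step is the sign bookkeeping in the dual: the twist by $\epsilon$ produces $ve+ev$ rather than $ve-ev$, which is exactly what forces $e^*$ to be \emph{central} in $\widetilde{E}^!$ so that the dual is an ordinary polynomial extension of $E^!$.
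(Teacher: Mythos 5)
Your proof is correct but takes a genuinely different, more self-contained route. The paper works on the dual side: it cites \cite[Proposition 2.2]{HVZ} to identify $\widetilde{E}$ with the quadratic dual of the polynomial extension $S^\natural=E^![\alpha]$, notes $S^\natural$ is Koszul Artin--Schelter regular since $S=E^!$ is, and reads off both Koszulity and the Frobenius property from Lemma \ref{lempre}. You instead work directly with $\widetilde{E}$: you derive the quadratic presentation $B=T(\widetilde V)/(\widetilde R)$ (which the paper records only afterwards, in (\ref{eqt2}), without justification), compute $B^!\cong E^!\otimes\kk[e^*]$ by hand---thereby reproving the special case of \cite{HVZ} needed here---invoke closure of Koszulity under tensor products, and verify that $\pi\colon B\twoheadrightarrow\widetilde E$ is an isomorphism by a Hilbert-series comparison; for the Frobenius property you construct the pairing on $\widetilde E$ explicitly from that of $E$, including the $\epsilon$-sign bookkeeping, rather than deducing it from Smith's duality lemma. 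Your route is longer and more computational but avoids the external citation, incidentally proves the presentation (\ref{eqt2}), and exhibits the Frobenius form concretely; the paper's route is shorter once \cite{HVZ} and Lemma \ref{lempre} are granted. One small remark: the Hilbert-series step relies on $B$ already being Koszul so that Fr\"oberg's formula applies, and your ordering correctly establishes that via $B^!$ before comparing series, so the chain of deductions is sound.
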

\begin{proof} Let $S=E^!=T(V^*)/(R^\bot)$ be the quadratic dual of $E$. Then $S$ is a Koszul Artin-Schelter regular algebra by Lemma \ref{lempre}. Let $S^\natural=S[\alpha]$ be the polynomial algebra with coefficients in $S$. It is well known that $S^\natural$ is a Koszul Artin-Schelter regular algebra. Let $V^\natural=V^*\oplus \kk\alpha$ and $$R^\natural=R^\bot\oplus \text{span}\{\beta\otimes \alpha-\alpha\otimes\beta|\beta\in V^*\}.$$ The Koszul algebra $S^\natural$ may be written as $S^\natural=T(V^\natural)/(R^\natural)$.

By \cite[Proposition 2.2]{HVZ}, it follows that $\widetilde{E}$ is the quadratic dual algebra of $S^\natural$. By Lemma \ref{lempre}, we obtain that $\widetilde{E}$ is Koszul Frobenius.
\end{proof}

We may write the trivial extension $\widetilde{E}$ by generators and relations. Let $\widetilde{V}=V\oplus \kk y$ and let
\begin{equation}\label{eqt2}
  \widetilde{R}=R\oplus R'\oplus \kk y\otimes y,\text{ where }R'=\text{span}\{x\otimes y+y\otimes x|x\in V\}.
\end{equation} Then $\widetilde{E}=T(\widetilde{V})/(\widetilde{R})$.

Assume that $\theta:R\to \kk$ is a Clifford map of $E$. Define a linear map
\begin{equation}\label{eqt3}
  \widetilde{\theta}:\widetilde{R}\to\kk,
\end{equation}
\begin{equation}\label{eqt4}
  \widetilde{\theta}(r)=\theta(r),\text{ for }r\in R;\ \widetilde{\theta}(y\otimes y)=1;\ \widetilde{\theta}(x\otimes y+y\otimes x)=0,
\text{ for }x\in V.
\end{equation}

\begin{lemma}\label{lemt2} Retain the notation as above. $\widetilde{\theta}$ is a Clifford map of the Koszul Frobenius algebra $\widetilde{E}$, and hence $\widetilde{E}(\widetilde{\theta})$ is a Clifford deformation of $\widetilde{E}$.
\end{lemma}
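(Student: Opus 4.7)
The plan is to invoke Lemma \ref{lem6}, which translates the Clifford-map condition into a centrality statement in the quadratic dual algebra. By the proof of Lemma \ref{lemt1}, the quadratic dual of $\widetilde{E}$ is exactly the polynomial extension $S^\natural = S[\alpha]$, where $S = E^!$ and $\alpha$ is dual to $z$. Under the relations $R^\natural = R^\bot \oplus \text{span}\{\beta\otimes\alpha - \alpha\otimes\beta : \beta\in V^*\}$ defining $S^\natural$, the new generator $\alpha$ is by construction central in $S^\natural$.

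First I would identify the element of $S^\natural_2 \cong \widetilde{R}^*$ that corresponds to the linear functional $\widetilde{\theta}$. Under the identification $\widetilde{V}^* = V^*\oplus \kk\alpha$, I claim that $\widetilde{\theta}$ is represented in $\widetilde{V}^*\otimes \widetilde{V}^*$ by $\theta_0 + \alpha\otimes\alpha$, where $\theta_0 \in V^*\otimes V^*$ is any lift of the element $\theta\in R^* \subseteq S_2$. Indeed, a direct pairing check shows: $\theta_0$ pairs with $r\in R$ to give $\theta(r)$ and annihilates both $R'$ and $\kk z\otimes z$; while $\alpha\otimes\alpha$ annihilates $R$ and $R'$ and pairs with $z\otimes z$ to give $1$. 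Passing to $S^\natural_2 = \widetilde{V}^*\otimes \widetilde{V}^*/\widetilde{R}^\bot$, the element $\widetilde{\theta}$ therefore corresponds to $\theta + \alpha^2$.

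Next I would verify centrality of $\theta + \alpha^2$ in $S^\natural$. Since $\theta$ is a Clifford map of $E$ by hypothesis, Lemma \ref{lem6} gives that $\theta$ is central in $S$, and it remains central after the polynomial extension to $S[\alpha]$. The generator $\alpha$ is central in $S[\alpha]$ by construction, so $\alpha^2$ is central as well. Thus $\theta + \alpha^2$ is a central element of $S^\natural_2$. Invoking Lemma \ref{lem6} now in the opposite direction (for the Koszul Frobenius algebra $\widetilde{E}$ with quadratic dual $S^\natural$) yields that $\widetilde{\theta}$ is a Clifford map of $\widetilde{E}$, and hence $\widetilde{E}(\widetilde{\theta})$ is a well-defined Clifford deformation.

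The only step requiring real care is the identification $\widetilde{\theta} \leftrightarrow \theta + \alpha^2$, which hinges on correctly computing $\widetilde{R}^\bot$ and matching it with $R^\natural$; once this bookkeeping is done, the rest is a routine application of the earlier results. A more naive alternative would be to compute the intersection $\widetilde{V}\otimes \widetilde{R}\cap \widetilde{R}\otimes \widetilde{V}$ explicitly by splitting $\widetilde{V} = V\oplus \kk z$ and verifying $(\widetilde{\theta}\otimes 1 - 1\otimes\widetilde{\theta})$ vanishes on each piece, but this is substantially more tedious and obscures why the result is true, so I would not pursue it.
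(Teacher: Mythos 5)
Your proof is correct, and it takes a genuinely different route from the paper's. The paper proves Lemma~\ref{lemt2} by brute force: it decomposes $\widetilde{V}\otimes\widetilde{R}$ and $\widetilde{R}\otimes\widetilde{V}$ into six direct summands apiece, takes an arbitrary element $w$ of the intersection, splits it into components $w_1,\dots,w_5$ plus a multiple of $z\otimes z\otimes z$, and then checks the identity $(\widetilde{\theta}\otimes 1 - 1\otimes\widetilde{\theta})(w)=0$ piece by piece, with the key step being a comparison of the occurrences of $z$ in the various tensor slots. Your argument instead routes everything through Lemma~\ref{lem6}: you identify $\widetilde{\theta}$ with the element $\theta+\alpha^2\in S^\natural_2$, note that $\theta$ is central in $S$ (again by Lemma~\ref{lem6} applied to $E$) and hence in $S[\alpha]$, that $\alpha$ is central by construction, and conclude. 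The computation verifying the identification $\widetilde{\theta}\leftrightarrow\theta_0+\alpha\otimes\alpha$ is sound: $\theta_0$ pairs correctly with $R$, both $\theta_0$ and $\alpha\otimes\alpha$ annihilate $R'$ (since each summand involves exactly one $z$ and $\theta_0\in V^*\otimes V^*$, while $\alpha$ vanishes on $V$), and $\alpha\otimes\alpha$ pairs to $1$ with $z\otimes z$. Your approach is shorter and more conceptual --- it makes transparent that the trivial extension is dual to a central polynomial extension, which is precisely why the extended functional stays central --- whereas the paper's explicit computation is self-contained and does not rely on the identification $(\widetilde{E})^!\cong S[\alpha]$ from Lemma~\ref{lemt1}. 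Both are valid; yours better explains \emph{why} the result holds, at the modest cost of leaning on Lemma~\ref{lem6} and the HVZ identification.
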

\begin{proof} Let $\{x_1,\dots,x_n\}$ be a basis of $V$. Then $\{x_i\otimes y+y\otimes x_i|i=1,\dots,n\}$ is a basis of $R'$. We may write $$\widetilde{V}\otimes\widetilde{R}=(V\otimes R)\oplus (V\otimes R')\oplus (V\otimes y\otimes y)\oplus (y\otimes R)\oplus (y\otimes R')\oplus(\kk y\otimes y\otimes y),$$ and similarly, $$\widetilde{R}\otimes\widetilde{V}=(R\otimes V)\oplus (R'\otimes V)\oplus (y\otimes y\otimes V)\oplus (R\otimes y)\oplus (R'\otimes y)\oplus(\kk y\otimes y\otimes y).$$ For an element $w\in\widetilde{V}\otimes\widetilde{R}$, we may write $$w=w_1+w_2+w_3+w_4+w_5+k y\otimes y\otimes y$$ for $w_1\in V\otimes R$, $w_2\in V\otimes R'$, $w_3\in V\otimes y\otimes y$, $w_4\in y\otimes R$, and $w_5\in y\otimes R'$.
Now assume $w$ is also in $\widetilde{R}\otimes\widetilde{V}$. By comparing the multiplicity of the element $z$ in the tensor products, we see $$w_1\in R\otimes V,\ w_2+w_4\in R'\otimes V\oplus R\otimes y,\ w_3+w_5\in y\otimes y\otimes V\oplus  R'\otimes y.$$ Assume $w_2=\sum_{i,j=1}^na_{ij}x_i\otimes(x_j\otimes y-y\otimes x_j)$, $w_4=y\otimes\sum_{i,j=1}^nb_{ij}x_i\otimes x_j$. Then
\begin{eqnarray*}
w_2+w_4&=&\sum_{i,j=1}^na_{ij}x_i\otimes(x_j\otimes y-y\otimes x_j)+y\otimes\sum_{i,j=1}^nb_{ij}x_i\otimes x_j\\
&=&\sum_{i,j=1}^na_{ij}(x_i\otimes x_j)\otimes y+\sum_{j=1}^n\sum_{i=1}^n(-a_{ij}x_i\otimes y+b_{ij}y\otimes x_i)\otimes x_j.
\end{eqnarray*}
Since $w_2+w_4\in R'\otimes V\oplus R\otimes z$, we have $\sum_{j=1}^n\sum_{i=1}^n(-a_{ij}x_i\otimes y+b_{ij}y\otimes x_i)\otimes x_j\in R'\otimes V$. Then $a_{ij}=b_{ij}$ for all $i,j$, and it follows that $$(1\otimes\widetilde{\theta})(w_2+w_4)=\theta(\sum_{i,j=1}^na_{ij}x_i\otimes x_j)y=(\widetilde{\theta}\otimes 1)(w_2+w_4).$$ Similarly, we have $(1\otimes\widetilde{\theta})(w_3+w_5)=(\widetilde{\theta}\otimes 1)(w_3+w_5)$. Since $\theta$ is a Clifford map of $E$, $(1\otimes \theta)(w_1)=(\theta\otimes 1)(w_1)$. Therefore, $(1\otimes\widetilde{\theta})(w)=(\widetilde{\theta}\otimes 1)(w)$.
\end{proof}

\begin{proposition}\label{propt1} Retain the notation as above. There is an isomorphism of $\mathbb Z_2$-graded algebras $\widetilde{E}(\widetilde{\theta})\cong E(\theta)\hat\otimes \mathbb C \mathbb G$.
\end{proposition}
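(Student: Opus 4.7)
The plan is to construct an explicit isomorphism $\Phi\colon \widetilde{E}(\widetilde{\theta})\to E(\theta)\hat\otimes\mathbb C\mathbb G$ by sending generators $x\in V$ to $x\hat\otimes 1$ and $z$ to $1\hat\otimes \alpha$, and then to verify surjectivity plus a dimension count. First, I unpack the defining relations of $\widetilde{E}(\widetilde{\theta})$ from (\ref{eqt2}) and (\ref{eqt4}): modulo the relations $\widetilde{r}-\widetilde{\theta}(\widetilde{r})$, the algebra $\widetilde{E}(\widetilde{\theta})$ is generated by $V\cup\{z\}$ subject to (a) the defining relations $r-\theta(r)=0$ of $E(\theta)$ for $r\in R$, (b) $z^2=1$, and (c) $xz+zx=0$ for every $x\in V$.

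Next I verify that $\Phi$ respects all three families of relations. Relations (a) go through immediately because $(x_i\hat\otimes 1)(x_j\hat\otimes 1)=x_ix_j\hat\otimes 1$ (the $|1||x_j|$ sign is trivial), so a relation $r-\theta(r)=0$ in $E(\theta)$ produces exactly the same relation in $E(\theta)\hat\otimes\mathbb C\mathbb G$. Relation (b) is $(1\hat\otimes\alpha)^2=1\hat\otimes\alpha^2=1$. For (c) is the key point: by the $\mathbb Z_2$-grading on $E(\theta)$ described before Proposition \ref{prop3}, every $x\in V$ sits in $E(\theta)_1$, while $1\hat\otimes\alpha$ has $\mathbb Z_2$-degree $1$, so the twisted multiplication yields
\[
(x\hat\otimes 1)(1\hat\otimes\alpha)+(1\hat\otimes\alpha)(x\hat\otimes 1)=x\hat\otimes\alpha+(-1)^{|\alpha||x|}(x\hat\otimes\alpha)=x\hat\otimes\alpha-x\hat\otimes\alpha=0,
\]
which is precisely the anticommutation coming from the twist by $\epsilon$ in the trivial extension (\ref{eqt}). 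Thus $\Phi$ is a well-defined homomorphism of algebras, and since it sends generators of degree $1$ to generators of degree $1$, it is a morphism of $\mathbb Z_2$-graded algebras.

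For surjectivity, note that the image of $\Phi$ contains $V\hat\otimes 1$ and $1\hat\otimes\alpha$; since $E(\theta)$ is generated by $V$ and $\mathbb C\mathbb G$ by $\alpha$, these jointly generate $E(\theta)\hat\otimes\mathbb C\mathbb G$. To conclude that $\Phi$ is an isomorphism, I invoke Proposition \ref{prop1} (and Remark \ref{rem1}) twice: $\dim \widetilde{E}(\widetilde{\theta})=\dim\widetilde{E}=2\dim E=2\dim E(\theta)=\dim\bigl(E(\theta)\hat\otimes\mathbb C\mathbb G\bigr)$, so a surjection between equidimensional finite dimensional spaces must be bijective.

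The main obstacle is purely notational: one has to keep straight that the $\mathbb Z_2$-grading on $E(\theta)$ places the generating subspace $V$ in odd degree, because this is exactly what makes the Koszul sign rule in the twisting tensor product $\hat\otimes$ produce the anticommutation $xz=-zx$ that the twist by $\epsilon$ in $\widetilde{E}$ demands. Once that matching is pinned down, the verification of the relations and the dimension count are straightforward.
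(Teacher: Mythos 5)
Your proof is correct and follows essentially the same route as the paper: both construct the map on generators $x\mapsto x\hat\otimes 1$, $z\mapsto 1\hat\otimes\alpha$, observe it descends to a surjection of $\mathbb Z_2$-graded algebras, and conclude by a dimension count using Proposition~\ref{prop1}. The only difference is that you spell out the verification of the three families of relations (in particular the Koszul-sign check giving $xz+zx=0$), which the paper leaves implicit.
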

\begin{proof} Note that $E(\theta)=T(V)/(r-\theta(r):r\in R)$ and $\widetilde{E}(\widetilde{\theta})=T(\widetilde{V})/(r-\widetilde{\theta}(r):r\in\widetilde{R})$. Let $\pi:T(V)\to E(\theta)$ be the projection map. We may define a linear map $\psi:T(\widetilde{V})\longrightarrow E(\theta)\hat\otimes \mathbb C \mathbb G$ by setting $\psi(x)=\pi(x)\hat\otimes 1$ for $x\in V$ and $\psi(y)=1\hat\otimes\alpha$. The map $\psi$ induces an algebra epimorphism $\overline{\psi}:\widetilde{E}(\widetilde{\theta})\longrightarrow E(\theta)\hat\otimes \mathbb C \mathbb G$. Clearly, $\overline{\psi}$ preserves the $\mathbb Z_2$-grading. Since $E(\theta)\hat\otimes \mathbb C \mathbb G$ and $\widetilde{E}(\widetilde{\theta})$ have the same dimension as vector spaces, $\overline{\psi}$ is an isomorphism.
\end{proof}

As a special case of Corollary \ref{cort1}, we have the following result.

\begin{corollary}\label{cort2} $\gldim_{\mathbb Z_2} \widetilde{E}(\widetilde{\theta})=\gldim_{\mathbb Z_2} E(\theta)$.
\end{corollary}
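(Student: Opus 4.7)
The plan is to derive this as a direct combination of the two results immediately preceding it. First I would invoke Proposition \ref{propt1}, which yields an isomorphism of $\mathbb Z_2$-graded algebras
\[
\widetilde{E}(\widetilde{\theta})\;\cong\;E(\theta)\,\hat\otimes\,\mathbb C\mathbb G.
\]
Since an isomorphism of $\mathbb Z_2$-graded algebras induces an equivalence of the corresponding graded module categories, it preserves $\mathbb Z_2$-graded global dimension. Hence the task reduces to showing that $\gldim_{\mathbb Z_2} E(\theta) = \gldim_{\mathbb Z_2}\bigl(E(\theta)\,\hat\otimes\,\mathbb C\mathbb G\bigr)$.

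For this, I would simply apply Corollary \ref{cort1} to the $\mathbb Z_2$-graded algebra $A := E(\theta)$. That corollary gives the desired equality without any additional hypotheses. Stringing the two identifications together produces the chain
\[
\gldim_{\mathbb Z_2}\widetilde{E}(\widetilde{\theta})
\;=\;\gldim_{\mathbb Z_2}\bigl(E(\theta)\,\hat\otimes\,\mathbb C\mathbb G\bigr)
\;=\;\gldim_{\mathbb Z_2} E(\theta),
\]
which is exactly the statement of the corollary.

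Since both inputs are already established in the paper, there is no real obstacle: the only thing to verify carefully is that the isomorphism of Proposition \ref{propt1} truly respects the $\mathbb Z_2$-grading (it does, by the construction of $\overline{\psi}$), so that Corollary \ref{cort1} applies verbatim. Consequently the proof is a one-line citation of the two prior results, and no independent argument about projective resolutions of $\widetilde{E}(\widetilde{\theta})$-modules is needed.
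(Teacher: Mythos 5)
Your proof is correct and matches the paper exactly: the paper derives this as ``a special case of Corollary \ref{cort1},'' implicitly invoking the isomorphism $\widetilde{E}(\widetilde{\theta})\cong E(\theta)\hat\otimes\mathbb C\mathbb G$ from Proposition \ref{propt1} and then applying Corollary \ref{cort1} with $A=E(\theta)$, which is precisely your chain of identifications.
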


We may iterate the above construction. By Lemma \ref{lemt1}, $\widetilde{E}$ is a Koszul Frobenius algebra, and the trivial extension $\widetilde{\widetilde{E}}$ of $\widetilde{E}$ is again a Koszul Frobenius algebra. The Clifford map $\widetilde{\theta}$ of the Koszul algebra $\widetilde{E}$ may be extended to a Clifford map $\widetilde{\widetilde{\theta}}$ of $\widetilde{\widetilde{E}}$ by the way as in (\ref{eqt3}) and (\ref{eqt4}). Hence we obtain a $\mathbb Z_2$-graded algebra $\widetilde{\widetilde{E}}(\,\widetilde{\widetilde{\theta}}\,)$.

\begin{proposition}\label{propt2} There is an equivalence of abelian categories $\gr_{\mathbb Z_2} \widetilde{\widetilde{E}}(\,\widetilde{\widetilde{\theta}}\,)\cong \gr_{\mathbb Z_2}E(\theta)$.
\end{proposition}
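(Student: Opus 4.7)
The plan is to combine Proposition \ref{propt1} (applied twice) with Lemma \ref{lemTE1}. More precisely, I would argue in three short steps.

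First, by Proposition \ref{propt1} applied to the Koszul Frobenius algebra $E$ with Clifford map $\theta$, there is an isomorphism of $\mathbb Z_2$-graded algebras
$$\widetilde{E}(\widetilde{\theta})\cong E(\theta)\hat\otimes \mathbb C\mathbb G.$$
Next, I would apply Proposition \ref{propt1} again, this time to the Koszul Frobenius algebra $\widetilde{E}$ (which is Koszul Frobenius by Lemma \ref{lemt1}) with its Clifford map $\widetilde{\theta}$ (which is Clifford by Lemma \ref{lemt2}); the construction of $\widetilde{\widetilde{\theta}}$ from $\widetilde{\theta}$ is entirely parallel to the construction of $\widetilde{\theta}$ from $\theta$, so the same proof yields
$$\widetilde{\widetilde{E}}(\,\widetilde{\widetilde{\theta}}\,) \cong \widetilde{E}(\widetilde{\theta})\hat\otimes\mathbb C\mathbb G.$$

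Combining these two isomorphisms gives
$$\widetilde{\widetilde{E}}(\,\widetilde{\widetilde{\theta}}\,) \cong E(\theta)\hat\otimes\mathbb C\mathbb G\hat\otimes\mathbb C\mathbb G$$
as $\mathbb Z_2$-graded algebras. Finally, Lemma \ref{lemTE1} (with $A=E(\theta)$) provides an equivalence of abelian categories
$$\gr_{\mathbb Z_2}\bigl(E(\theta)\hat\otimes\mathbb C\mathbb G\hat\otimes\mathbb C\mathbb G\bigr)\cong \gr_{\mathbb Z_2} E(\theta),$$
and chaining this with the preceding algebra isomorphism produces the desired equivalence.

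There is essentially no obstacle here beyond a small bookkeeping check: one must confirm that applying the hat tensor construction twice really does reproduce the setting in which the isomorphism $\Upsilon$ of \eqref{TE1} is used. The only subtlety is to make sure the Clifford map produced at the second stage of the iteration matches what Proposition \ref{propt1} expects, i.e.\ that extending $\widetilde{\theta}$ by the rules in \eqref{eqt3}--\eqref{eqt4} still satisfies the Clifford-map condition for $\widetilde{\widetilde{E}}$; but this is exactly what Lemma \ref{lemt2} gives when applied to $\widetilde{E}$ in place of $E$. Once this is observed, the proof reduces to the display above.
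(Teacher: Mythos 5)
Your proposal is correct and follows exactly the paper's own argument: apply Proposition \ref{propt1} twice to obtain $\widetilde{\widetilde{E}}(\,\widetilde{\widetilde{\theta}}\,)\cong E(\theta)\hat\otimes\mathbb C\mathbb G\hat\otimes\mathbb C\mathbb G$, then invoke Lemma \ref{lemTE1}. The paper states this more tersely, but the extra care you take to justify the second application of Proposition \ref{propt1} (checking that the iterated trivial extension and Clifford map satisfy the hypotheses again, via Lemmas \ref{lemt1} and \ref{lemt2}) is sound and merely makes the same proof explicit.
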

\begin{proof} By Proposition \ref{propt1}, $\widetilde{\widetilde{E}}(\,\widetilde{\widetilde{\theta}}\,)\cong\widetilde{E}(\widetilde{\theta})\hat\otimes\mathbb C \mathbb G\cong E(\theta)\hat\otimes \mathbb C \mathbb G\hat\otimes\mathbb C \mathbb G$. Now the result follows from Lemma \ref{lemTE1}.
\end{proof}

\section{Kn\"{o}rrer Periodicity for noncommutative quadrics revisited}

In this section, the base field is assumed to be $\kk=\mathbb C$. Let $S=T(V)/(R)$ be a noetherian Koszul Artin-Schelter regular algebra of $\gldim A\ge2$. Let $z\in S_2$ be a central regular element of $S$, and set $A=S/(z)$. The {\it double branched cover} of $A$ is defined to be the algebra (cf. \cite{K,LW,CKMW}) $$A^\#:=S[v_1]/(z+v_1^2).$$
We write $S[v_1]=T(U)/(R')$, where $U=V\oplus \kk v_1$ and $R'=R\oplus\{v\otimes v_1-v_1\otimes v|v\in V\}.$

Let $E=S^!$ be the quadratic dual algebra of $S$, and $\widetilde{E}$ the trivial extension of $E$ as defined by (\ref{eqt}).
Then by \cite[Proposition 2.2]{HVZ}, $\widetilde{E}$ is isomorphic to the quadratic dual algebra $S[v_1]^!$.

Denote by $\pi_S\colon T(V)\to S$ the natural projection map. Pick an element $r_0\in V\otimes V$ such that $\pi_S(r_0)=z$. Let $\theta_z$ be the Clifford map as defined by (\ref{eqtheta}) (cf. Lemma \ref{lem8}), and $E(\theta_z)$ the Clifford deformation of $E$ associated to $\theta_z$.

Let $\pi_{S[v_1]}\colon T(U)\to S[v_1]$ be the projection map. Set $r_0^\#=r_0+v_1\otimes v_1$. Then $\pi_{S[v_1]}(r_0^\#)=z+v_1^2$. Let $\widetilde{\theta_z}$ be the composition ${R'}^\bot\hookrightarrow U^*\otimes U^*\overset{r_0^\#}\to\kk$. Since $z+v_1^2$ is a central element, $\widetilde{\theta_z}$ is a Clifford map of $\widetilde{E}$.

The main purpose of this section is to recover Kn\"{o}rrer's Periodicity Theorem in case of quadric hypersurface singularities. Firstly, we recover \cite[Corollary 2.8]{K} and \cite[Theorem 1.6]{CKMW} for quadrics without using matrix factorizations.

\begin{theorem}\label{thmk} Retain the notation as above. Then the algebra $A$ is a noncommutative isolated singularity if and only if so is $A^\#$.
\end{theorem}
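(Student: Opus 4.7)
The plan is to reduce everything to the semisimplicity criterion of Theorem \ref{thm2} and then exploit the identification $\widetilde{E}(\widetilde{\theta_z})\cong E(\theta_z)\hat\otimes \mathbb C\mathbb G$ supplied by Proposition \ref{propt1}.

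First I would check that $A^\#$ actually fits into the framework of Theorem \ref{thm2}. The polynomial extension $S[v_1]$ is a Koszul Artin-Schelter regular algebra, of global dimension $\gldim S+1\ge 3$, so in particular $\gldim S[v_1]\ge 2$. The element $z+v_1^2\in S[v_1]_2$ is central (since $z$ is central in $S$ and $v_1$ is central in $S[v_1]$), and is regular (it is homogeneous of degree $2$ in a domain-like extension; regularity follows from $z$ being regular in $S$, or directly by a Hilbert series calculation on $S[v_1]/(z+v_1^2)$). Thus $A^\#=S[v_1]/(z+v_1^2)$ falls under the hypotheses of Theorem \ref{thm2}.

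Next I would apply Theorem \ref{thm2} twice. For $A=S/(z)$, it gives that $A$ is a noncommutative isolated singularity if and only if $E(\theta_z)$ is semisimple as a $\mathbb Z_2$-graded algebra. For $A^\#$, using the identification $S[v_1]^!\cong\widetilde E$ from \cite{HVZ} together with the observation, already made just above the statement, that the Clifford map of $\widetilde E$ corresponding to the central regular element $z+v_1^2$ coincides with the iterated Clifford map $\widetilde{\theta_z}$ constructed in Section \ref{sectr}, Theorem \ref{thm2} gives that $A^\#$ is a noncommutative isolated singularity if and only if $\widetilde E(\widetilde{\theta_z})$ is semisimple as a $\mathbb Z_2$-graded algebra. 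Hence the theorem reduces to the equivalence
\[
E(\theta_z)\text{ is $\mathbb Z_2$-graded semisimple}\ \Longleftrightarrow\ \widetilde E(\widetilde{\theta_z})\text{ is $\mathbb Z_2$-graded semisimple}.
\]

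For the last step I would invoke Proposition \ref{propt1}, which provides an isomorphism $\widetilde E(\widetilde{\theta_z})\cong E(\theta_z)\hat\otimes \mathbb C\mathbb G$ of $\mathbb Z_2$-graded algebras. Since $\mathbb Z_2$-graded semisimplicity is exactly $\gldim_{\mathbb Z_2}=0$, the equivalence above is the special case of Corollary \ref{cort1}, which asserts $\gldim_{\mathbb Z_2} A=\gldim_{\mathbb Z_2}(A\hat\otimes \mathbb C\mathbb G)$ for any $\mathbb Z_2$-graded algebra $A$. Alternatively, one can argue directly from Lemma \ref{lemt3}: a $\mathbb Z_2$-graded module over $E(\theta_z)\hat\otimes\mathbb C\mathbb G$ is projective iff it is projective as a $\mathbb Z_2$-graded $E(\theta_z)$-module, and the analogous statement for simple modules plus Proposition \ref{prop4} (both algebras are $\mathbb Z_2$-graded Frobenius) yields the equivalence.

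The only real obstacle I anticipate is the verification in the first paragraph, namely matching up the two descriptions of $\widetilde{\theta_z}$: the one constructed here from the element $r_0^\#=r_0+v_1\otimes v_1$ via (\ref{eqtheta}), and the one constructed abstractly in (\ref{eqt3})--(\ref{eqt4}) of Section \ref{sectr}. This requires tracing the dualities carefully, using the decomposition $U^*\otimes U^*\cong V^*\otimes V^*\oplus V^*\otimes\kk v_1^*\oplus\kk v_1^*\otimes V^*\oplus\kk v_1^*\otimes v_1^*$ and the description of ${R'}^\bot$ in terms of $R^\bot$ and the commutation relations $v\otimes v_1-v_1\otimes v$. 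Once that bookkeeping is done, the rest of the proof is a straightforward application of Theorem \ref{thm2}, Proposition \ref{propt1}, and Corollary \ref{cort1}.
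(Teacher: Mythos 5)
Your proposal is correct and follows essentially the same route as the paper: apply Theorem \ref{thm2} to both $A$ and $A^\#$, then transfer the semisimplicity criterion via Proposition \ref{propt1} and Corollary \ref{cort1}. The one item you flag as a potential obstacle, matching the two constructions of $\widetilde{\theta_z}$, is already asserted (as a routine check) in the paragraph preceding the theorem, so it is not actually a gap to be filled.
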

\begin{proof} By Theorem \ref{thm2}, $A$ (resp. $A^\#$) is a noncommutative isolated singularity if and only if $E(\theta_z)$ (resp. $\widetilde{E}(\widetilde{\theta_z})$) is a $\mathbb Z_2$-graded semisimple algebra. By Corollary \ref{cort2}, $E(\theta_z)$ is a $\mathbb Z_2$-graded semisimple algebra if and only if so is $\widetilde{E}(\widetilde{\theta_z})$.
\end{proof}

The {\it second double branched cover} of $A$ is defined to be the algebra $$A^{\#\#}:=S[v_1,v_2]/(z+v_1^2+v_2^2).$$
Let $W=V\oplus\kk v_1\oplus\kk v_2$. Then $S[v_1,v_2]=T(W)/(R'')$, where $$R''=R\oplus\{v\otimes v_1-v_1\otimes v|v\in V\}\oplus\{v\otimes v_2-v_2\otimes v|v\in V\}\oplus\mathbb C(v_1\otimes v_2-v_2\otimes v_1).$$

Let $\pi_{S[v_1,v_2]}\colon T(W)\to S[v_1,v_2]$ be the projection map. Let $r_0^{\#\#}=r_0+v_1\otimes v_1+v_2\otimes v_2$. Then $\pi_{S[v_1,v_2]}(r_0^{\#\#})=z+v_1^2+v_2^2$. Let $\widetilde{\widetilde{\theta_z}}$ be the composition ${R''}^\bot\hookrightarrow W^*\otimes W^*\overset{r_0^{\#\#}}\to\kk$. Then $\widetilde{\widetilde{\theta_z}}$ is a Clifford map associated to $\widetilde{\widetilde{E}}$, which is  corresponding to the map obtained in (\ref{eqt4}).

We recover the following Kn\"{o}rrer's Periodicity Theorem (cf. \cite[Theorem 3.1]{K} and \cite[Theorem 1.7]{CKMW}) for quadrics without using matrix factorizations.

\begin{theorem}\label{thmkp} Retain the notation as above. Then there is an equivalence of triangulated categories $\underline{\mcm} A\cong\underline{\mcm} A^{\#\#}$.
\end{theorem}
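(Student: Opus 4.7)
The plan is to assemble three equivalences and compose them. First, I would apply Theorem \ref{thm1}(iii) to the pair $(S[v_1,v_2],\, z+v_1^2+v_2^2)$. For this I need to know that $S[v_1,v_2]$ is a Koszul Artin-Schelter regular algebra (iterated central polynomial extensions preserve the Koszul AS-regular property) and that $z+v_1^2+v_2^2$ is a central regular element of degree $2$ (clear since $z$ is central regular in $S$ and $v_1,v_2$ are central indeterminates). The theorem then gives
\[
\underline{\mcm}\, A^{\#\#} \;\cong\; D^b\bigl(\gr_{\mathbb Z_2}\bigl(S[v_1,v_2]\bigr)^{!}\bigl(\theta_{z+v_1^2+v_2^2}\bigr)\bigr).
\]

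Second, I would identify the right-hand side with $D^b(\gr_{\mathbb Z_2}\widetilde{\widetilde{E}}(\,\widetilde{\widetilde{\theta_z}}\,))$. Two applications of \cite[Proposition 2.2]{HVZ} yield an isomorphism $(S[v_1,v_2])^{!}\cong \widetilde{\widetilde{E}}$ of Koszul Frobenius algebras, as already used in the setup preceding the theorem. Under this identification, the Clifford map $\theta_{z+v_1^2+v_2^2}$ constructed from the lift $r_0^{\#\#}=r_0+v_1\otimes v_1+v_2\otimes v_2$ is, by inspection of the definitions in (\ref{eqt3})--(\ref{eqt4}), precisely the iterated extension $\widetilde{\widetilde{\theta_z}}$ of the original Clifford map $\theta_z$ of $E=S^{!}$. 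This observation is explicitly recorded in the paragraph preceding Theorem \ref{thmkp} and so requires no additional work.

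Third, I would invoke Proposition \ref{propt2} to obtain the equivalence of abelian categories
\[
\gr_{\mathbb Z_2} \widetilde{\widetilde{E}}(\,\widetilde{\widetilde{\theta_z}}\,)\;\cong\; \gr_{\mathbb Z_2} E(\theta_z),
\]
which induces an equivalence of bounded derived categories. Finally, Theorem \ref{thm1}(iii) applied to the pair $(S,z)$ gives $D^b(\gr_{\mathbb Z_2}E(\theta_z))\cong \underline{\mcm}\,A$. Composing the three triangle equivalences produces the desired $\underline{\mcm}\,A\cong \underline{\mcm}\,A^{\#\#}$.

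The main potential obstacle is bookkeeping rather than mathematics: one must verify that the Clifford map arising abstractly from the central element $z+v_1^2+v_2^2$ via the recipe (\ref{eqtheta}) really coincides, under the isomorphism $(S[v_1,v_2])^{!}\cong\widetilde{\widetilde{E}}$, with the map built by two applications of the trivial-extension construction in Section \ref{sectr}. Once that identification is in hand, the proof is just the composition of the three equivalences above; nothing further about matrix factorizations or explicit module categories is needed, which is precisely the advantage of the Clifford-deformation viewpoint.
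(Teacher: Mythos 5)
Your proof is correct and follows essentially the same route as the paper: both invoke Theorem \ref{thm1}(iii) for the pairs $(S,z)$ and $(S[v_1,v_2],z+v_1^2+v_2^2)$, use the identification of the resulting Clifford map with the iterated trivial-extension map $\widetilde{\widetilde{\theta_z}}$ (which the paper records in the paragraph preceding the theorem), and then apply Proposition \ref{propt2} to bridge the two $\mathbb Z_2$-graded module categories. Your write-up merely makes explicit the bookkeeping that the paper compresses into a two-line proof.
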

\begin{proof} By Theorem \ref{thm1}(iii), we have equivalences of triangulated categories $\underline{\mcm} A\cong D^b(\gr_{\mathbb Z_2}E(\theta_z))$ and $\underline{\mcm} A^{\#\#}\cong D^b(\gr_{\mathbb Z_2}\widetilde{\widetilde{E}}(\widetilde{\widetilde{\theta_z}}))$. By Proposition \ref{propt2}, we have an equivalence of abelian categories $\gr_{\mathbb Z_2}E(\theta_z)\cong \gr_{\mathbb Z_2}\widetilde{\widetilde{E}}(\widetilde{\widetilde{\theta_z}})$. Hence the result follows.
\end{proof}

\section{Examples}\label{sec-exam}

In this section, we will list all the possible noncommutative quadric hypersurfaces obtained from an Artin-Schelter regular algebra of type-$S_2$ as listed in \cite[Table 3.11, P.183]{AS}. The key point in the computation is the analysis of the structures of degree zero part of the Clifford deformations.

Let $U$ be a 3-dimensional vector space with a fixed basis $X,Y,Z$, and in this section, we assume
\begin{equation}\label{eq-alg}
  S=\kk \langle X,Y,Z\rangle/(f_1,f_2,f_3),
\end{equation}
where $$f_1=ZX+XZ,f_2=YZ+ZY,f_3=X^2+Y^2.$$
The associated algebra $S$ is indeed the Artin-Schelter regular algebra of type-$S_2$ as listed in \cite[Table 3.11, P.183]{AS}. Then $S$ is a Koszul algebra of global dimension 3, which is also a domain.

Let $V=U^*$ be the dual space. To simplify the notations, we write $x,y,z$ for the basis of $V$ dual to $X,Y,Z$.

The quadratic dual $E(=S^!)$ of $S$ has been computed in Example \ref{ex3}, which is the algebra $E=T(V)/(R)$, where $R$ is the subspace of $V\otimes V$ spanned by $$xz-zx,yz-zy,x^2-y^2,z^2,xy,yx.$$ The basis of $V\otimes R\cap R\otimes V$ has been listed in Example \ref{ex3}.
Now it is easy to check that the only possible nontrivial Clifford maps are defined as following
\begin{equation}\label{eqex1}
  \theta\colon R\to \kk,\ \ \theta(z^2)=\alpha, \theta(xy)=\theta(yx)=\beta,\theta(x^2-y^2)=\lambda\text{ and }\theta(\text{others})=0,
\end{equation}
where $0\neq(\alpha,\beta,\lambda)\in\kk^3$.

Let $C=E(\theta)_0$. Then $C$ is a commutative algebra and has a basis $1,xz,yz,x^2$. Write $a=xz,b=yz,c=x^2$. We have the following table of multiplications of $C$:
$$\begin{tabular}{|c|c|c|c|c|}
  \hline
   & 1 & $a$ & $b$ & $c$ \\
   \hline
  1 & 1 & $a$ & $b$ & $c$ \\
  \hline
  $a$ & $a$ & $\alpha c$ & $\alpha\beta$ & $\lambda a+\beta b$ \\
  \hline
  $b$ & $b$ & $\alpha\beta$ & $\alpha c-\lambda\alpha$ & $\beta a$ \\
  \hline
  $c$ & $c$ & $\lambda a+\beta b$ & $\beta\alpha$ & $\lambda c+\beta^2$ \\
  \hline
  \end{tabular}$$
  \begin{center}  \text{Table 1.}\end{center}

We make a detailed analysis of the structures of $C$ by choosing different scalars $(\alpha,\beta,\lambda)$. Since $C$ is 4-dimensional, $C$ is either semisimple or $C/J$ is isomorphic to a product of $\kk$ where $J$ is the Jacobson radical of $C$.

We may assume $\alpha=1$ and $\beta=1$ (one may replace $z$ by $\sqrt{\alpha}z$, $x$ by $\sqrt{\beta}x$ and $y$ by $\sqrt{\beta}y$, if necessary).

Case (i). $\alpha\neq0$, $\beta\neq0$.

By Table 1, the commutative algebra $C$ has the relations $a^2=c$, $ab=1$, $ac=\lambda a+b$, $b^2=c-\lambda$, $bc=a$, $c^2=\lambda c+1$. One sees that $C$ is generated by $a$. By $c^2=\lambda c+1$, one has $C\cong \kk[a]/(f)$ where $f=a^4-\lambda a^2-1$.

If $\lambda\neq \pm2\sqrt{-1}$, then $f$ does not have multiple roots. Then $C\cong\kk[a]/(f)\cong \kk^4$.

If $\lambda=\pm2\sqrt{-1}$, then $f=(a^2\pm\sqrt{-1})^2$. It follows that $C\cong \kk[u]/(u^2)\times \kk[u]/(u^2)$.

Case (ii). $\alpha=1,\beta=0,\lambda=0$.

The commutative algebra $C$ has relations $a^2=c,b^2=c$ and $ab=ac=bc=c^2=0$. Then $C\cong \kk[u,v]/(uv,u^2-v^2)\cong\kk[u,v]/(u^2,v^2)$.

Case (iii). $\alpha=1,\beta=0,\lambda=1$.

By Table 1, the commutative algebra $C$ has relations $a^2=c$, $ac=a$, $b^2=c-1$, $c^2=c$ $ab=bc=0$. Let $e_1,e_2,e_3,e_4$ be the set of complete primitive idempotents of $\kk^4$. Then $C\cong \kk^4$ by setting $a\mapsto e_1-e_2$, $b\mapsto \sqrt{-1}(e_3+e_4)$, $c\mapsto e_1+e_2$.

Case (iv). $\alpha=0,\beta=1$.

By Table 1, $C$ has the relations $a^2=ab=b^2=0$, $ac=\lambda a+b$, $bc=a$, $c^2=\lambda c+1$.

If $\lambda=\pm2\sqrt{-1}$, then $C\cong\kk[u,v]/(u^2,v^2)$. The isomorphism is defined by $u\mapsto a+\frac{\lambda}{2}b$, $v\mapsto c-\frac{\lambda}{2}$.

Assume $\lambda\neq \pm2\sqrt{-1}$. By the identity $c^2-\lambda c-1=0$, we have $(c-t_1)(c-t_2)=0$ where $t_1=\frac{\lambda+\sqrt{\lambda^2+1}}{2}$ and $t_2=\frac{\lambda-\sqrt{\lambda^2+4}}{2}$. Let $p=\frac{t_1}{t_1t_2-t_1^2}$ and $q=\frac{t_2}{t_1t_2-t_2^2}$. Then $e_1=p(c-t_1)$ and $e_2=q(c-t_2)$ are idempotents such that $e_1+e_2=1$ and $e_1e_2=0$. Then $C\cong \kk[u]/(u^2)\times \kk[u]/(u^2)$. The isomorphism is defined by $(u,0)\mapsto a+\frac{\lambda-\sqrt{\lambda^2+4}}{2}b$, $(0,u)\mapsto a+\frac{\lambda+\sqrt{\lambda^2+4}}{2}b$.

Case (v). $\alpha=\beta=0,\lambda=1$.

By Table 1, one has $a^2=ab=bc=b^2=0$, $ac=a$ and $c^2=c$. Then $c$ and $1-c$ are primitive idempotents. It follows that $C\cong \kk[u]/(u^2)\times \kk[u]/(u^2)$.

Summarizing, we have the following table of all the possible noncommutative quadric hypersurfaces defined by a central element $w\in S_2$ of $S$ (recall that a Clifford map $\theta$ corresponds to a central element in $S_2$, see Lemma \ref{lem6}), in which $(\alpha,\beta,\lambda)$ is the scalar of the Clifford map as defined in (\ref{eqex1}).

{\small
\begin{tabular}{|c|c|c|c|c|}
\hline
No. &$(\alpha,\beta,\lambda)$ & $w$ &$C=E(\theta)_0$ & singularities of $S/wS$ \\
  \hline
1&$(1,1,\lambda\neq \pm2\sqrt{-1})$ & $Z^2+XY+YX+\lambda X^2$ & $\kk^4$ & isolated \\
  \hline
2&$(1,1,\pm2\sqrt{-1})$ & $Z^2+XY+YX\pm2\sqrt{-1} X^2$ & $\kk[u]/(u^2)\times\kk[u]/(u^2)$ & nonisolated \\
  \hline
3&$(1,0,0)$ & $Z^2$ & $\kk[u,v]/(u^2,v^2)$& nonisolated \\
  \hline
4&$(1,0,1)$ & $Z^2+X^2$ & $\kk^4$ &isolated\\
    \hline
5&$(0,1,\lambda\neq \pm2\sqrt{-1})$&$XY+YX+\lambda X^2$&$\kk[u]/(u^2)\times\kk[u]/(u^2)$& nonisolated\\
  \hline
6&$(0,1,\pm2\sqrt{-1})$&$XY+YX\pm2\sqrt{-1}X^2$&$\kk[u,v]/(u^2,v^2)$&nonisolated\\
  \hline
7&$(0,0,1)$&$X^2$&$\kk[u]/(u^2)\times\kk[u]/(u^2)$&nonisolated\\
  \hline
\end{tabular}
  \begin{center}Table 2.\end{center}}

Note that the noncommutative quadrics of Nos. 1 and 4 in the above table are isolated singularities. We remark that the associated Clifford deformations of these two cases are both isomorphic to $\mathbb {C}\mathbb{G}^{\times 4}$. This is because the associated Clifford deformations are always strongly $\mathbb Z_2$-graded and commutative.

\vspace{5mm}

\subsection*{Acknowledgments}
The authors thank James J. Zhang and Guisong Zhou for many useful conversations. J.-W. He was supported by NSFC (No. 11971141). Y. Ye was supported by NSFC (No. 11571329, 11971449).

\vspace{5mm}


\end{document}